\newcommand{\cds}{\cdots}
\newcommand{\1}{ \mathds{1}}
\newcommand{\vsb}{\vspace{2mm}}
\newcommand{\q}{\quad}
\newcommand{\maru}[1]{{\ooalign{\hfil#1\/\hfil\crcr
\raise.167ex\hbox{\mathhexbox20D}}}}
\newcommand{\ruby}[2]{%
 \leavevmode
 \setbox0=\hbox{#1}%
 \setbox1=\hbox{\tiny #2}%
 \ifdim\wd0>\wd1 \dimen0=\wd0 \end{lemma}se \dimen0=\wd1 \fi
 \hbox{%
   \kanjiskip=0pt plus 2fil
   \xkanjiskip=0pt plus 2fil
   \vbox{%
     \hbox to \dimen0{%
       \tiny \hfil#2\hfil}%
     \nointerlineskip
     \hbox to \dimen0{\mathstrut\hfil#1\hfil}}}}
\newcommand{\la}{\langle}
\newcommand{\ra}{\rangle}
\DeclareMathOperator*{\tensor}{\otimes}
\DeclareMathOperator*{\fusion}{\boxtimes}
\newcommand{\Z}{\mathbb{Z}}
\newcommand{\C}{\mathbb{C}}
\newcommand{\R}{\mathbb{R}}
\newcommand{\RM}{\mathrm{RM}}
\newcommand{\g}{\mathfrak{g}}
\newcommand{\F}{\mathbb{F}}
\newcommand{\Sym}{{\rm Sym}}
\newcommand{\Imm}{{\rm Im}}
\newcommand{\Ker}{{\rm Ker}}
\newcommand{\vir}{\mathrm{Vir}}
\newcommand{\aut}{\mathrm{Aut}\,}
\newcommand{\Aut}{\mathrm{Aut}\,}
\newcommand{\Inn}{\mathrm{Inn}\,}
\newcommand{\wt}{\mathrm{wt}}
\newcommand{\be}{\beta}
\newcommand{\al}{\alpha}
\newcommand{\EuD}{\EuScript{D}}
\makeatletter \@addtoreset{equation}{section}
\theoremstyle{plain}
\newtheorem{theorem}{Theorem}[section]
\newtheorem{proposition}[theorem]{Proposition}
\newtheorem{lemma}[theorem]{Lemma}
\newtheorem{corollary}[theorem]{Corollary}
\theoremstyle{definition}
\newtheorem{definition}[theorem]{Definition}
\newtheorem{notation}[theorem]{Notation}
\theoremstyle{remark}
\newtheorem{remark}[theorem]{Remark}
\numberwithin{equation}{section}
\title[Classification of Holomorphic framed VOAs]{Classification of holomorphic framed vertex operator algebras of central charge 24}
 \subjclass[2010]{Primary  17B69}
\author{Ching Hung Lam} %
  \address[C. H. Lam] {Institute of Mathematics, Academia Sinica, Taipei 10617, Taiwan and National Center for Theoretical Sciences of  Taiwan.}
  \email{chlam@math.sinica.edu.tw}
\author[H. Shimakura]{Hiroki Shimakura}%
\address[H. Shimakura]{Graduate School of Information Sciences,
Tohoku University,
Aramaki aza Aoba 6-3-09, Aoba-ku Sendai-city, 980-8579, Japan }%
\email {shimakura@m.tohoku.ac.jp}%
\date{}
\thanks{C.\,H. Lam was partially supported by NSC grant
  100-2628-M-001005-MY4 of Taiwan}
\thanks{H.\ Shimakura was partially supported by Grants-in-Aid for Scientific Research (No. 23540013), JSPS}
\newcommand{\sfr}[2]{\leavevmode\kern-.1em
  \raise.5ex\hbox{\the\scriptfont0 #1}\kern-.1em
  /\kern-.15em\lower.25ex\hbox{\the\scriptfont0 #2}}
\newcommand{\shf}{\sfr{1}{2}}
\newcommand{\supp}{\mathrm{supp}}
\begin{document}

\begin{abstract}
This article is a continuation of our work on the classification of holomorphic framed vertex operator algebras of central charge $24$.
We show that a holomorphic framed VOA of central charge $24$ is uniquely determined by the Lie algebra structure of its
weight one subspace. As a consequence, we completely classify all holomorphic framed vertex operator algebras of central charge $24$ and show that there exist exactly $56$ such vertex operator algebras, up to isomorphism.
\end{abstract}
\maketitle

%\tableofcontents

\section{Introduction}

The classification of holomorphic vertex operator algebras (VOAs) of central charge $24$ is  one of the fundamental problems in
vertex operator algebras and mathematical physics.  In 1993 Schellekens \cite{Sc93} obtained  a partial classification by
determining possible Lie algebra structures  for the weight one subspaces  of holomorphic VOAs of  central charge $24$. There are
$71$ cases in his list but only $39$ of the $71$ cases were known explicitly at that time. It is also an open question if the Lie
algebra structure of the weight one subspace will determine the VOA structure uniquely when the central charge is $24$.
Recently, a special class of holomorphic VOAs, called framed VOAs, was studied in \cite{Lam, LS}. Along with other results,  $17$
new examples were constructed. Moreover, it was shown in \cite{Lam,LS} that there exist exactly $56$ possible Lie algebras for
holomorphic framed VOAs of central charge $24$ and all cases can be constructed explicitly. In this article, we complete the
classification of holomorphic framed VOAs of central charge $24$. The main theorem is as follows:

\begin{theorem} \label{MainT} The isomorphism class of a holomorphic framed VOA of central charge $24$ is
uniquely determined by the Lie algebra structure  of its weight one subspace. In particular, there exist exactly $56$ holomorphic
framed VOAs of central charge $24$, up to isomorphism.
\end{theorem}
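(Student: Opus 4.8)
The plan is to pass from the vertex operator algebra to its pair of structure codes, to read off these codes from the weight one Lie algebra, and then to invoke the rigidity of simple current extensions. First I would fix a Virasoro frame $\ising^{\otimes 48}$ inside $V$ and recall the associated structure codes: every irreducible submodule over the frame has the form $\bigotimes_{i=1}^{48} L(\tfrac12,h_i)$ with $h_i\in\{0,\tfrac12,\tfrac1{16}\}$; the positions carrying $\tfrac1{16}$ give a binary word, and the collection of all such words is the $\tfrac1{16}$-code $D$, while the sector $V^0$ with no $\tfrac1{16}$-entries is the code VOA $M_C=\bigoplus_{\beta\in C}M_\beta$ attached to an even linear code $C$. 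For holomorphic $V$ one has $C=D^\perp$ with $D$ triply even of length $48$ (all weights divisible by $8$), and $V=\bigoplus_{\alpha\in D}V^\alpha$ is a $D$-graded simple current extension of $M_C$. Since each $V^\alpha$ is a simple current $M_C$-module and the extension is holomorphic, it is rigid: $M_C$ together with the grading group $D$ recovers $V$ up to isomorphism (as used in \cite{Lam,LS}). Hence it suffices to show that $V_1$ determines the pair $(C,D)$ up to permutation equivalence.

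Next I would read $(C,D)$ off $V_1$. Decomposing $V_1=\bigoplus_{\alpha\in D}(V^\alpha)_1$, the lowest weight of a summand inside $V^\alpha$ equals $\tfrac{|\alpha|}{16}+\tfrac{|\beta|}{2}$ for an appropriate $\tfrac12$-word $\beta$, so a weight one contribution forces $(|\alpha|,|\beta|)\in\{(0,2),(8,1),(16,0)\}$; here $|\alpha|\in\{8,16\}$ is permitted precisely because $D$ is triply even. Thus $\g:=V_1$ is assembled from the weight $2$ words of $C$ and from the sectors $V^\alpha$ with $|\alpha|\in\{8,16\}$, and the resulting root system, its decomposition into simple ideals, and the affine levels are explicit functions of how these codewords are distributed among the $48$ coordinates. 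Combining this with Schellekens' constraints \cite{Sc93} and the self-duality $C=D^\perp$, I would argue that for each Lie algebra on the list of \cite{Lam,LS} the admissible pair $(C,D)$ is forced up to equivalence. The degenerate case $V_1=0$ is handled separately: a holomorphic framed VOA of central charge $24$ with vanishing weight one space must be the moonshine module, whose uniqueness among framed VOAs is known. Together with the realizability of all $56$ Lie algebras established in \cite{Lam,LS}, this produces exactly $56$ isomorphism classes.

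The \emph{main obstacle} is the injectivity in the previous step: ruling out that two inequivalent admissible pairs $(C,D)$ and $(C',D')$ give isomorphic weight one Lie algebras. This is genuinely delicate because a single $V$ typically carries many inequivalent frames, so the code pair is not a priori an invariant of $V$, and the weight $8$ and weight $16$ words of $D$ interact with $C$ through the fusion and structure constants of the $\tfrac1{16}$-sectors in a way not visible from the root system alone. I expect to resolve this by reconstructing directly from $\g$ a canonical sub-VOA of $V$ — for instance a lattice-type sub-VOA built from the coroot lattice and the Cartan subalgebra — and showing that $V$ is the unique simple current extension of it; this absorbs the choice of frame and leaves $V_1$ as a complete invariant, if necessary supplemented by a finite check against the classification of triply even codes of length $48$.
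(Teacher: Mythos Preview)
Your proposal contains a genuine gap at the very first step. The assertion that ``$M_C$ together with the grading group $D$ recovers $V$ up to isomorphism'' is not what the uniqueness of simple current extensions gives. Proposition~\ref{PSCE} says the VOA structure is determined by the \emph{collection of $V_C$-modules} $\{V^\beta:\beta\in D\}$, but for a fixed $D$ there can be several inequivalent such collections of simple currents over $V_C$. This is exactly the issue the paper isolates in Lemma~\ref{delta}: two holomorphic framed VOAs with the same $\sfr{1}{16}$-code $D$ differ by an element of the space $\mathcal{Q}_D$, and only when every such $\delta$ arises from an automorphism $\sigma_\gamma$ of $V_C$ (Lemma~\ref{ga}, which needs the hypothesis $\dim(D\cdot D)=\binom{d}{2}+1$) does $D$ alone determine $V$. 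So your ``rigidity'' input is simply false without further work, and that further work is one of the main contributions of the paper (Theorem~\ref{TUni}).

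The second half of your plan --- recovering $(C,D)$ from $V_1$ --- is also misdirected. As you yourself note, a single $V$ carries many inequivalent frames, so the code pair is \emph{not} an invariant of $V$; there is nothing to recover. The paper does not attempt this. Instead it case-splits on $D$ via the Betsumiya--Munemasa classification of maximal triply even codes of length $48$. For $D$ contained in an extended doubling or in $\RM(1,4)^{\oplus3}$ or $\RM(1,4)\oplus\EuD(d_{16}^+)$, $V$ is a simple current extension of $(V_{\sqrt2E_8}^+)^{\otimes3}$ or an analogue, and the classification runs through maximal totally singular subspaces of the associated quadratic space $(R(V)^3,q_V^3)$ rather than through $(C,D)$. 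Two residual ambiguities (the Lie algebras $\g(C_8F_4^2)$ and $\g(A_7C_3^2A_3)$) are resolved not by any code argument but by realizing both candidate VOAs as $\Z_2$-orbifolds of a common lattice VOA with respect to involutions that are then shown to be conjugate in $\Aut V_L$ (Theorems~\ref{TA15D9}, \ref{TA7D5}). For $D\subset D^{ex}$ the paper proves the code-theoretic criterion above and deduces that $D$ already determines $V$, so no comparison of Lie algebras is needed there at all. Your proposed route through ``$V_1$ determines $(C,D)$'' does not reach any of these mechanisms.
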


\begin{remark}
By our classification\,(see \cite[Table 1]{LS}), we noticed that the levels of the representations of Lie algebra associated to the
weight one subspace are powers of two for any holomorphic framed VOA of central charge $24$. Conversely, by comparing with
the list of Lie algebras in \cite{Sc93}, we found that except for one case where the Lie algebra has the type
$E_{6,4}C_{2,1}A_{2,1}$, all other Lie algebras in \cite{Sc93} can be  obtained from holomorphic framed VOAs if the levels are powers of two.
\end{remark}

First let us recall the results in \cite{Lam,LS} and discuss our methods. It was shown in \cite{LY} that a  code $D$  of length divisible
by $16$ can be realized as a $\sfr{1}{16}$-code of a holomorphic framed VOA if and only if $D$ is triply even and the all-one vector
$\mathbf{1}\in D$. Therefore, the classification of holomorphic framed VOAs of rank $8k$ can be reduced into the following 2
steps:
\begin{enumerate}[(1)]
\item classify all triply even codes $D$ of length $16k$ such that $\mathbf{1}\in D$;
\item determine all possible VOA structures with the $\sfr{1}{16}$-code $D$ for each triply even code $D$;
\end{enumerate}

\begin{notation}\label{doubling}
Let $E$ be a doubly even code of length $n$ and let $d: \Z_2^n \to \Z_2^{2n}$ be the linear map defined by $d(\al)=(\al, \al)$. The code
$\EuD(E)= \la d(E), (\mathbf{1}, 0)\ra_{\Z_2}$ spanned by $d(E)$ and $(\mathbf{1}, 0)$ is called  the \textit{extended doubling} of $E$,
where $\mathbf{1}$ is the all-one vector.

Let $\RM(1,4)$ be the first order Reed-Muller code of degree $4$ and  $d_{16}^+$  the unique indecomposable doubly even
self-dual code of length $16$. We also use $A\oplus B$ to denote the direct sum of two subcodes $A$ and $B$.
\end{notation}

Recently, triply even codes of length $48$ were classified by Betsumiya-Munemasa \cite[Theorem 29]{BM}: a maximal triply even
code of length $48$ is isomorphic to an extended doubling, a direct sum of extended doublings or an exceptional triply even code
$D^{ex}$ of dimension $9$.
By this result,
the classification of holomorphic framed VOAs of central charge $24$ can be divided into the following 4 cases. Let $D$ be a $\sfr{1}{16}$-code of a holomorphic framed VOA $U$ of central charge $24$.  Then, up to equivalence,
\begin{enumerate}[(i)]
\item $D$ is subcode of an extended doubling $\EuD(E)$ for some doubly even code $E$ of length $24$;
\item $D$ is a subcode of $\RM(1,4)^{\oplus 3}$  but is not contained in an extended doubling;
\item  $D$ is a subcode of ${\rm RM}(1,4) \oplus \EuD(d_{16}^+)$ but  is not contained in an extended doubling or $\RM(1,4)^{\oplus 3}$;
\item  $D$ is a subcode of the $9$-dimensional exceptional triply even code $D^{ex}$ of length $48$
but is not contained in an extended doubling, $\RM(1,4)^{\oplus3}$ or ${\rm RM}(1,4)\oplus \EuD(d_{16}^+)$.
\end{enumerate}

\medskip
The main idea is to enumerate all possible framed VOA structures in each case.

\noindent\textbf{Case (i).}  If $D$ is a subcode of  an extended doubling, then it was shown \cite[Theorem 3.9]{Lam} that $U$ is
isomorphic to a lattice VOA $V_L$ or its $\Z_2$-orbifold $\tilde{V}_L$ associated to the $-1$-isometry of the lattice $L$.
Conversely, any lattice VOA associated to an even unimodular lattice of rank $24$ or its $\Z_2$-orbifold has a Virasoro frame whose $\sfr{1}{16}$-code $D$ satisfies (i). In this case, it
was known \cite{DGM} that the VOA structure is determined by the Lie algebra structure of its weight one subspace.

\begin{proposition}\label{PDGM}{\rm (\cite[Table2, Proposition 6.5]{DGM})}
Let $U$ be a holomorphic framed VOA of central charge $24$ with a $\sfr{1}{16}$-code satisfying (i).
%Let $U$ be a holomorphic VOA of central charge $24$ isomorphic to $V_L$ or $\tilde{V}_L$.
Then the isomorphism class of $U$ is uniquely determined by the Lie algebra structure of $U_1$.
In particular, there exist exactly $39$ holomorphic framed VOAs of central charge $24$ with $\sfr{1}{16}$-codes satisfying (i), up to isomorphism.
%isomorphic to $V_L$ or $\tilde{V}_L$.
\end{proposition}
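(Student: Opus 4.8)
The plan is to reduce the statement to the lattice and orbifold constructions and then to invoke the analysis of Dolan--Goddard--Montague. By \cite[Theorem 3.9]{Lam}, any holomorphic framed VOA $U$ of central charge $24$ whose $\sfr{1}{16}$-code satisfies (i) is isomorphic either to a lattice VOA $V_L$ or to its $\Z_2$-orbifold $\tilde V_L$ associated with the lift of the $-1$-isometry, where $L$ is an even unimodular lattice of rank $24$. By Niemeier's classification of even unimodular lattices of rank $24$ there are exactly $24$ such lattices, and each of them is determined up to isometry by its root system $\Phi=\{\alpha\in L\mid \la\alpha,\alpha\ra=2\}$, the Leech lattice being the unique one with $\Phi=\emptyset$. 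This turns the problem into a finite check over an explicit list.

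First I would record the weight one Lie algebras of the two families. For the lattice VOA, $(V_L)_1=\h\oplus\bigoplus_{\alpha\in\Phi}\C e^{\alpha}$ is the reductive Lie algebra of rank $24$ whose semisimple part has root system $\Phi$ and is realized at level $1$; in particular its type is read off directly from the Niemeier root system, and $V_L\cong V_{L'}$ as VOAs if and only if $L\cong L'$ as lattices, so $(V_L)_1$ already determines $V_L$. For the orbifold $\tilde V_L$, I would compute $(\tilde V_L)_1$ from the $\theta$-fixed part of $V_L$ together with the lowest conformal-weight space of the $\theta$-twisted $V_L$-module, as carried out in \cite{DGM}; this produces a second reductive Lie algebra whose type again depends only on $L$ (with $\tilde V_\Lambda$ the Moonshine module, for which the weight one space vanishes).

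The remaining and genuinely hard point is injectivity, now possibly across the two families: whenever two of the VOAs in $\{V_L,\tilde V_L\}$ carry isomorphic weight one Lie algebras, one must show they are in fact isomorphic as VOAs, since a priori an orbifold could accidentally share its weight one type with a lattice theory or with another orbifold. This is exactly the content of \cite[Proposition 6.5]{DGM}, established there through explicit structural and character computations rather than any soft argument, and the tabulation \cite[Table 2]{DGM} shows that the weight one Lie algebras of the theories $V_L$ and $\tilde V_L$, with $L$ ranging over the $24$ Niemeier lattices, realize precisely $39$ distinct isomorphism classes. Combining this with the reduction above yields that the isomorphism class of $U$ is uniquely determined by $U_1$ and that there are exactly $39$ holomorphic framed VOAs of central charge $24$ with $\sfr{1}{16}$-codes satisfying (i).
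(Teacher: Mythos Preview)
Your argument is essentially the same as the paper's: the paper does not give an independent proof of this proposition but simply combines the reduction \cite[Theorem 3.9]{Lam} to $V_L$ or $\tilde V_L$ with the Dolan--Goddard--Montague analysis \cite[Table 2, Proposition 6.5]{DGM}, exactly as you do. The only point you leave implicit is the converse direction needed for the exact count of $39$: one must also know that every $V_L$ and every $\tilde V_L$ with $L$ Niemeier actually admits a Virasoro frame whose $\sfr{1}{16}$-code satisfies (i), which the paper states explicitly in the discussion preceding the proposition.
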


%The proposition above and \cite[Theorem 5.46]{LS} show the following:
%According to \cite{DGM}, we are done.
%the isomorphism type of $V$ is uniquely determined by the Lie algebra structure of $V_1$.
\medskip

\noindent\textbf{Case (ii).}  Suppose that $D$ is a subcode of $\RM(1,4)^{\oplus 3}$. Then $U$ is a simple current extension of
$V^{\otimes3}$, where $V=V_{\sqrt2E_8}^+$.  This case was studied in \cite[Section 5]{LS} and $U\cong \mathfrak{V}(\mathcal{S})$
for some maximal totally singular subspace $\mathcal{S}$ of $R(V)^3$.
%(See Sections \ref{sec:5}   for the definition of $\mathfrak{V}(\mathcal{S})$).
In particular, the following theorem was proved by the uniqueness of simple current extensions
\cite{DM}. (See Sections \ref{sec:2.4},  \ref{sec:5} and \ref{sec:4}  for the definition of $\mathcal{S}(k,m,n,\pm)$,
$\mathfrak{V}(\mathcal{S})$ and $\mathfrak{g}(\Phi)$, respectively.)
% since $V_C$ contains $M_{{\rm RM}(2,4)}^{\otimes3}$ and $M_{{\rm RM}(2,4)}\cong V_{\sqrt2E_8}^+$.

\begin{proposition}\label{PAim}{\rm (\cite[Theorem 5.46]{LS})} Let $U$ be a holomorphic VOA of central charge $24$.
Assume that $U\cong\mathfrak{V}(\mathcal{S})$ for some maximal totally singular subspace $\mathcal{S}$ of $R(V)^3$.
\begin{enumerate}[{\rm (1)}]
\item If $U_1$ is isomorphic to neither $\g(C_{8}F_{4}^2)$ nor $\g(A_{7}C_{3}^2A_{3})$, then the isomorphism class of $U$ is uniquely determined by the Lie algebra structure of $U_1$.
\item If $U_1\cong \g(C_{8}F_{4}^2)$ then $U$ is isomorphic to $\mathfrak{V}(\mathcal{S}(5,3,0,-))$ or $\mathfrak{V}(\mathcal{S}(5,3,2,+))$.
\item If $U_1\cong \g(A_{7}C_{3}^2A_{3})$ then $U$ is isomorphic to $\mathfrak{V}(\mathcal{S}(5,2,1,+))$ or $\mathfrak{V}(\mathcal{S}(5,2,0))$.
\end{enumerate}
\end{proposition}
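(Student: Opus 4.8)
The plan is to convert the statement into a finite combinatorial problem about maximal totally singular subspaces and then close it using the uniqueness of simple current extensions \cite{DM}. Since $V=V_{\sqrt2E_8}^+$ has central charge $8$ and, because $\sqrt2E_8$ has no vectors of norm $2$, satisfies $V_1=0$, the tensor cube also has $(V^{\otimes3})_1=0$. Hence for $U=\mathfrak{V}(\mathcal{S})=\bigoplus_{\alpha\in\mathcal{S}}M^\alpha$ the entire weight-one Lie algebra is $U_1=\bigoplus_{0\neq\alpha\in\mathcal{S}}(M^\alpha)_1$; that is, $U_1$ carries a grading by the finite abelian group $\mathcal{S}$ whose identity component vanishes, and its isomorphism type is governed entirely by which summands $M^\alpha$ contain weight-one vectors together with the zeroth-product (bracket) relations among them. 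By \cite{DM} the simple current extension $\bigoplus_{\alpha\in\mathcal{S}}M^\alpha$ is the unique $\mathcal{S}$-graded VOA structure on this module, so two extensions $\mathfrak{V}(\mathcal{S})$ and $\mathfrak{V}(\mathcal{S}')$ are isomorphic as VOAs as soon as some automorphism of $V^{\otimes3}$ maps $\mathcal{S}$ to $\mathcal{S}'$. The relevant automorphisms of the quadratic space $R(V)^3$ are generated by the componentwise action of $\aut(R(V))$ and the permutations of the three tensor factors. Thus everything reduces to (a) classifying the subspaces $\mathcal{S}$ up to this action, which is exactly the parametrization $\mathcal{S}(k,m,n,\pm)$, and (b) computing the map $\mathcal{S}\mapsto U_1$.

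The heart of the argument is step (b). For each admissible tuple $(k,m,n,\pm)$ I would determine the root system and the levels of $U_1=\g(\Phi)$ explicitly: read off from the labels $\alpha\in\mathcal{S}$ which summands carry weight-one vectors, compute $\dim U_1$, and then fix the simple ideals and their levels from the zeroth products of the corresponding lowest-weight vectors, using the Virasoro frame to control the conformal weights $h_\alpha$. This produces a finite table sending each equivalence class of $\mathcal{S}$ to a semisimple Lie algebra $\g(\Phi)$. Assertion (1) is then the statement that this table is injective on all classes except two, and (2)--(3) record the two exceptional values of $\g(\Phi)$: the algebra $\g(C_{8}F_{4}^2)$ is realised precisely by $\mathcal{S}(5,3,0,-)$ and $\mathcal{S}(5,3,2,+)$, while $\g(A_{7}C_{3}^2A_{3})$ is realised precisely by $\mathcal{S}(5,2,1,+)$ and $\mathcal{S}(5,2,0)$.

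The main obstacle is the injectivity claim in (1): inequivalent subspaces could a priori yield isomorphic Lie algebras, so one must show that the tuple $(k,m,n,\pm)$ is recoverable from $\g(\Phi)$ outside the two coincidences. The cleanest route is to match the multiset of simple types together with their levels against the admissible tuples, checking that this data pins down $(k,m,n)$ and the discrete invariant $\pm$; the integer parameters are constrained by $\dim U_1$ and the rank, while $\pm$ is forced by the finer level pattern. The two genuine coincidences in (2) and (3) are exactly the tuples where this recovery fails, and there one cannot avoid a direct comparison of the subspaces to confirm that no automorphism of $R(V)^3$ relates them. It is this residual two-fold ambiguity that Proposition \ref{PAim} merely records and that is resolved only in the proof of Theorem \ref{MainT}, by comparing these candidates against the VOAs arising in the other cases.
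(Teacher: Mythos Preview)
Your overall strategy is correct and matches the approach of \cite{LS} (which the present paper cites rather than reproves): classify the maximal totally singular subspaces of $R(V)^3$ up to $O(R(V),q_V)\wr\Sym_3$, compute the Lie algebra $\mathfrak{V}(\mathcal{S})_1$ for each class, and read off (1)--(3) from the resulting table together with Remark~\ref{Rem}(2) and Proposition~\ref{PSCE}.

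Two points need correction. First, the classification is not parametrised solely by tuples $\mathcal{S}(k,m,n,\pm)$. Theorem~\ref{TC} gives three families: the subspaces satisfying \eqref{Cond2}; the subspaces $\mathcal{S}(m,k_1,k_2,\varepsilon)$ with $m-k_1-k_2$ even; and the subspaces $\mathcal{S}(m,k_1,k_2)$ (no sign) with $m-k_1-k_2$ odd. Indeed $\mathcal{S}(5,2,0)$ in part~(3) belongs to the third family. The first family does not have a finite tuple-parametrisation and must be handled separately: by Lemma~\ref{PS2} these $\mathfrak{V}(\mathcal{S})$ are lattice VOAs or their $\Z_2$-orbifolds, whence the uniqueness statement for them follows from Proposition~\ref{PDGM}, not from your table argument. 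Second, your closing remark about how the residual ambiguity in (2)--(3) is ``resolved'' is wrong: it is not settled by comparison with VOAs from other cases, but in Section~\ref{Proof2} of the present paper by exhibiting each pair as $\Z_2$-orbifolds of the \emph{same} lattice VOA ($V_{N(A_{15}D_9)}$ and $V_{N(A_7^2D_5^2)}$ respectively) with respect to conjugate involutions (Theorems~\ref{MT2} and~\ref{MT3}). This is, however, outside the scope of Proposition~\ref{PAim} itself, which only records the two candidates.
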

Hence, it remains to show that $\mathfrak{V}(\mathcal{S}(5,3,0,-))\cong \mathfrak{V}(\mathcal{S}(5,3,2,+))$ and
$\mathfrak{V}(\mathcal{S}(5,2,1,+))\cong \mathfrak{V}(\mathcal{S}(5,2,0))$, which will be achieved in Section \ref{Proof2} (see
Theorems \ref{MT2} and \ref{MT3}).
% and   the isomorphism class of
%$\mathfrak{V}(\mathcal{S})$ is uniquely determined if $U_1\cong \g(C_{8}F_{4}^2)$ or $\g(A_{7}C_{3}^2A_{3})$.
As a consequence, we obtain the following theorem:

\begin{theorem}\label{TCase2} Let $U$ be a holomorphic framed VOA of central charge $24$ with a $\sfr{1}{16}$-code satisfying (ii).
%Assume that $U\cong\mathfrak{V}(\mathcal{S})$ for some maximal totally singular subspace $\mathcal{S}$ of $R(V)^3$.
%Assume that the $\sfr{1}{16}$-code $D$ satisfies (ii).
Then the isomorphism class of $U$ is uniquely determined by the Lie algebra structure of $U_1$.
Excluding the VOAs in Proposition \ref{PDGM}, there exist exactly $10$ holomorphic framed VOAs of central charge $24$ with $\sfr{1}{16}$-codes satisfying (ii), up to isomorphism.
% isomorphic to $\mathfrak{V}(\mathcal{S})$ but not to $V_L$ or $\tilde{V}_L$.
\end{theorem}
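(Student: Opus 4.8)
The plan is to combine the structural description of Case~(ii) with Proposition~\ref{PAim} and the two VOA isomorphisms announced above, and then to perform the counting from the classification data.

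First I would invoke the setup of Case~(ii): any holomorphic framed VOA $U$ of central charge $24$ whose $\sfr{1}{16}$-code satisfies (ii) is realized as a simple current extension $U\cong\mathfrak{V}(\mathcal{S})$ of $V^{\otimes 3}$, where $V=V_{\sqrt2 E_8}^+$ and $\mathcal{S}$ is a maximal totally singular subspace of $R(V)^3$. This places $U$ under the hypothesis of Proposition~\ref{PAim}, whose part~(1) already yields the desired uniqueness whenever $U_1$ is isomorphic to neither $\g(C_8F_4^2)$ nor $\g(A_7C_3^2A_3)$.

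Next I would settle the two exceptional Lie algebra types. By Proposition~\ref{PAim}(2), if $U_1\cong\g(C_8F_4^2)$ then $U$ is one of $\mathfrak{V}(\mathcal{S}(5,3,0,-))$ and $\mathfrak{V}(\mathcal{S}(5,3,2,+))$; by part~(3), if $U_1\cong\g(A_7C_3^2A_3)$ then $U$ is one of $\mathfrak{V}(\mathcal{S}(5,2,1,+))$ and $\mathfrak{V}(\mathcal{S}(5,2,0))$. To conclude uniqueness in these two cases it suffices to show that the two candidates in each pair are isomorphic as VOAs,
\[
\mathfrak{V}(\mathcal{S}(5,3,0,-))\cong\mathfrak{V}(\mathcal{S}(5,3,2,+)),\qquad
\mathfrak{V}(\mathcal{S}(5,2,1,+))\cong\mathfrak{V}(\mathcal{S}(5,2,0)),
\]
which are precisely the isomorphisms to be established in Section~\ref{Proof2} as Theorems~\ref{MT2} and~\ref{MT3}. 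Granting these, every holomorphic framed VOA in Case~(ii) is determined up to isomorphism by $U_1$, which proves the uniqueness assertion.

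Finally, for the enumeration I would read off from the classification of maximal totally singular subspaces $\mathcal{S}\subset R(V)^3$ recorded in \cite[Table~1]{LS} the complete list of Lie algebra types $U_1$ occurring in Case~(ii). Since each corresponding VOA is now pinned down by its $U_1$, counting VOAs reduces to counting distinct Lie algebra types; after discarding those types already realized among the $39$ VOAs of Proposition~\ref{PDGM}, the remaining list has exactly $10$ entries, giving the stated count. The genuine obstacle lies not in this assembly but in the two isomorphisms on which it rests: $\mathcal{S}(5,3,0,-)$ and $\mathcal{S}(5,3,2,+)$ are a priori distinct totally singular subspaces, so one must exhibit an explicit VOA isomorphism carrying one extension to the other---presumably induced by an automorphism of $V^{\otimes 3}$ assembled from $\Aut(V)$ acting on $R(V)$ together with a permutation of the three tensor factors---and verify that it respects the simple current structure. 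Constructing and checking such an automorphism (and the analogous one for the second pair) is the technical heart of the argument.
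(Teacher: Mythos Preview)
Your assembly of the proof of Theorem~\ref{TCase2} is correct and matches the paper: the theorem is indeed a corollary of Proposition~\ref{PAim} together with Theorems~\ref{MT2} and~\ref{MT3}, and the enumeration follows from \cite[Table~1]{LS}.

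However, your closing speculation about how to establish the two key isomorphisms is off target, and since you flag this as ``the technical heart of the argument'' it is worth correcting. You suggest constructing an automorphism of $V^{\otimes 3}$, built from $\Aut(V)$ and a permutation of the tensor factors, that carries $\mathcal{S}(5,3,0,-)$ to $\mathcal{S}(5,3,2,+)$ (and similarly for the other pair). This cannot work: the labels $\mathcal{S}(m,k_1,k_2,\varepsilon)$ and $\mathcal{S}(m,k_1,k_2)$ parametrize precisely the orbits of maximal totally singular subspaces under $O(R(V),q_V)\wr\Sym_3$ (Theorems~\ref{TClassify}, \ref{TClassify2}, \ref{TC}), so distinct labels mean the subspaces are \emph{not} conjugate under any such automorphism. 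The isomorphisms $\mathfrak{V}(\mathcal{S}(5,3,0,-))\cong\mathfrak{V}(\mathcal{S}(5,3,2,+))$ and $\mathfrak{V}(\mathcal{S}(5,2,1,+))\cong\mathfrak{V}(\mathcal{S}(5,2,0))$ are genuinely extrinsic to the $V^{\otimes 3}$-structure.

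The paper's route is instead via $\Z_2$-orbifolds of lattice VOAs. For each pair one finds singular vectors $W$ so that both $\mathfrak{V}(\mathcal{S})$ arise as $\Z_2$-orbifolds of the \emph{same} lattice VOA ($V_{N(A_{15}D_9)}$ in the first case, $V_{N(A_7^2D_5^2)}$ in the second) with respect to involutions $g_W$. One then checks that the fixed-point Lie subalgebras of these involutions are isomorphic (to $\g(C_8B_4^2)$, respectively $\g(A_7A_3B_2^2A_1^2)$), and proves directly (Theorems~\ref{TA15D9} and~\ref{TA7D5}) that involutions of $\Aut V_L$ with these prescribed fixed-point subalgebras form a single conjugacy class. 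Conjugate involutions yield isomorphic orbifolds (Remark~\ref{Rem2}), giving the result.
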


%In this case, if $V_1$ is isomorphic to neither $\g(C_{8}F_{4}^2)$ nor $\g(A_{7}C_{3}^2A_{3})$, then we obtain the uniqueness by \cite{LS}.
%the isomorphism type of $U$ is uniquely determined by the Lie algebra structure of $V_1$.
\medskip

\noindent\textbf{Case (iii).}  If $D$ is a subcode of ${\rm RM}(1,4) \oplus \EuD(d_{16}^+)$, then $U$ is a simple current
extension of $V_{\sqrt2E_8}^+\otimes V_{\sqrt2D_{16}^+}^+$ and this case was also studied in \cite[Section 6]{LS}. Moreover, one has the
following proposition by \cite[Theorem 6.17]{LS}, Theorem \ref{TCase2} and the uniqueness of simple current extensions \cite{DM}.

\begin{proposition}\label{PCase3}{\rm (\cite[Theorem 6.17]{LS})} Let $U$ be a holomorphic framed VOA of central charge $24$ with a $\sfr{1}{16}$-code satisfying (iii).
%Assume that $U$ is a simple current extension of $V_{\sqrt2E_8}^+\otimes V_{\sqrt2D_{16}^+}^+$ but is not of
%$(V_{\sqrt2E_8}^+)^{\otimes3}$.
%Assume that the $\sfr{1}{16}$-code $D$ satisfies (iii).
Then the isomorphism class of $U$ is uniquely determined by the Lie algebra structure of $U_1$.
Excluding the VOAs in Proposition \ref{PDGM} and Theorem \ref{TCase2}, there exist exactly $4$ holomorphic framed VOAs of central charge $24$ with $\sfr{1}{16}$-codes satisfying (iii), up to isomorphism.
%that is a simple current extension of $V_{\sqrt2E_8}^+\otimes V_{\sqrt2D_{16}^+}^+$ but is not of
%$(V_{\sqrt2E_8}^+)^{\otimes3}$ and is isomorphic to neither $V_L$ nor $\tilde{V}_L$.
\end{proposition}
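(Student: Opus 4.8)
The plan is to present $U$ as a simple current extension and then import the uniqueness already established for Case (ii). First I would recall from \cite[Section 6]{LS} that, since $D\subseteq \RM(1,4)\oplus\EuD(d_{16}^+)$, the VOA $U$ contains $V_{\sqrt2E_8}^+\otimes V_{\sqrt2D_{16}^+}^+$ (of central charge $8+16=24$) as a full sub-VOA and decomposes over it as a simple current extension; hence $U\cong\mathfrak{V}(\mathcal{S})$ for a maximal totally singular subspace $\mathcal{S}$ in the relevant group of simple currents. The enumeration of \cite[Theorem 6.17]{LS} then produces the finite list of Lie algebras $\g(\Phi)$ that occur as $U_1$, together with the subspaces $\mathcal{S}$ realizing each of them.

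Next comes the uniqueness step. By the uniqueness of simple current extensions \cite{DM}, the isomorphism class of $\mathfrak{V}(\mathcal{S})$ depends only on the orbit of $\mathcal{S}$ under $\aut(V_{\sqrt2E_8}^+\otimes V_{\sqrt2D_{16}^+}^+)$. Thus the assertion that $U_1$ determines $U$ reduces to checking that the Lie algebra $U_1$ separates these orbits. For all but a few of the listed root systems this is immediate, since $\mathcal{S}$ is already pinned down by the weight-one data; the issue is confined to the small number of Lie algebras that could a priori arise from two inequivalent subspaces.

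This is exactly where Theorem \ref{TCase2} enters, and where I expect the main difficulty. The even unimodular lattices of rank $16$ are $E_8\oplus E_8$ and $D_{16}^+$, and the module theory of $V_{\sqrt2D_{16}^+}^+$ is tightly linked to that of $(V_{\sqrt2E_8}^+)^{\otimes 2}$; consequently a potentially ambiguous $U_1$ in Case (iii) turns out to coincide with the weight-one algebra of a VOA realizable inside $\RM(1,4)^{\oplus3}$, i.e. a Case (ii) VOA. For such an $U_1$ I would not reprove uniqueness directly: instead I would identify the Case (iii) candidate with the corresponding Case (ii) VOA via \cite{DM}, and then invoke Theorem \ref{TCase2} to conclude that $U_1$ determines it. The delicate point is the bookkeeping — one must confirm that every ambiguity flagged by \cite[Theorem 6.17]{LS} is resolved either by a direct orbit comparison or by such an identification, with no residual coincidence left over.

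Finally, for the count I would traverse the list from \cite[Theorem 6.17]{LS} and discard each Lie algebra that is already realized in Case (i) or Case (ii). By construction these correspond precisely to the subcodes lying in an extended doubling or in $\RM(1,4)^{\oplus3}$, which the hypothesis of Case (iii) excludes, so removing the VOAs of Proposition \ref{PDGM} and Theorem \ref{TCase2} leaves exactly the genuinely new ones. Counting these yields $4$, which together with the uniqueness just established proves the proposition.
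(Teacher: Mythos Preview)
Your proposal is correct and follows essentially the same route the paper takes: the paper does not give a separate proof of this proposition but simply asserts that it follows from \cite[Theorem 6.17]{LS}, Theorem \ref{TCase2}, and the uniqueness of simple current extensions \cite{DM}, which are precisely the three ingredients you invoke and combine in the right way. Your expanded outline of how the ambiguous cases are resolved by identifying them with Case (ii) VOAs and then appealing to Theorem \ref{TCase2} accurately fleshes out what the paper leaves implicit.
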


Therefore, no extra work is required for this case.

\medskip

\noindent\textbf{Case (iv).}  In \cite{Lam}, holomorphic framed VOAs associated to the subcodes of $D^{ex}$ have been studied
and the Lie algebra structures of their weight one subspaces are determined. It was also shown that the Lie algebra structures of
their weight one subspaces are uniquely determined by the $\sfr{1}{16}$-codes \cite[Theorem 6.78]{Lam}.

Suppose that the $\sfr{1}{16}$-code $D$ satisfies (iv). Then by the classification \cite{BM} (see also
 \textit{\small http://www.st.hirosaki-u.ac.jp/$\sim$betsumi/triply-even/}),  $D$ is
equivalent to $D^{ex}=D_{[10]}$, $D_{[8]}$ or $D_{[7]}$ (see Section \ref{sec:2.7} for the definition of $D_{[k]}$). Moreover, the
Lie algebras of the VOAs associated to $D_{[10]}$, $D_{[8]}$ and $D_{[7]}$  are not included in Cases (i), (ii) and (iii) (see  \cite[Table 1]{LS}).  Therefore, it
remains to show that the VOA structure is uniquely determined by the $\sfr{1}{16}$-code $D$ if $D=D_{[10]}$, $D_{[8]}$ or $D_{[7]}$,
which will be achieved in Corollary \ref{MC}.

\begin{theorem}\label{TCase4} Let $U$ be a holomorphic framed VOA of central charge $24$ with a $\sfr{1}{16}$-code $D$ satisfying (iv).
Then the isomorphism class of $U$ is uniquely determined by the Lie algebra structure of $U_1$.
In particular, there exist exactly $3$ holomorphic framed VOAs of central charge $24$ with $\sfr{1}{16}$-codes satisfying (iv), up to isomorphism.
\end{theorem}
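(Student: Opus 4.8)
The plan is to exploit the case division already in place. Since the $\sfr{1}{16}$-code $D$ satisfies (iv), the Betsumiya--Munemasa classification \cite{BM} forces $D$ to be equivalent to one of the three codes $D^{ex}=D_{[10]}$, $D_{[8]}$, or $D_{[7]}$, so the whole statement reduces to treating these three codes. The argument then splits into two tasks: first, to show that the Lie algebra structure of $U_1$ determines which of the three codes $D$ is; and second, to show that $D$ in turn determines $U$ up to isomorphism.

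The first task is already available and requires no new work. By \cite[Theorem 6.78]{Lam}, for each of the three codes the Lie algebra structure of the weight-one space of any framed VOA with that $\sfr{1}{16}$-code is uniquely determined by the code, and by \cite[Table 1]{LS} the three resulting Lie algebras are pairwise non-isomorphic and do not occur among the Lie algebras arising in Cases (i)--(iii). Consequently, knowing $U_1$ as a Lie algebra already recovers the equivalence class of $D$.

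The substance of the theorem is the second task, namely that the isomorphism class of $U$ is recovered from $D$ alone; this is the content of Corollary \ref{MC}. I would prove it by realizing $U$ as a simple current extension of a suitable framed sub-VOA that is itself determined by $D$, and then invoking the uniqueness of simple current extensions \cite{DM}. Concretely, a holomorphic framed VOA decomposes over its Virasoro frame into $\{0,\tfrac12,\tfrac1{16}\}$-graded irreducibles, and its $\sfr{1}{2}$-code and module structure are tightly constrained by $D$ through the holomorphy (self-duality) condition; once these are pinned down, $U$ is the extension of the associated code VOA by the family of simple currents indexed by the cosets of $D$. Since each $D_{[k]}$ is a subcode of $D^{ex}$, the three framed VOAs are related by adjoining prescribed irreducible simple-current modules, which suggests handling the three cases uniformly and reducing the larger codes to the smaller ones. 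Granting that the base VOA and the group of glued simple currents are forced by $D$, \cite{DM} yields that the extension, hence $U$, is unique up to isomorphism.

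The main obstacle is precisely this rigidity step inside Corollary \ref{MC}: one must verify that the identification of the base VOA and of the glued modules carries no discrete freedom beyond what $D$ records, i.e. that the relevant extension data (the fusion structure and the induced module actions) are completely fixed by the combinatorics of the exceptional code. This requires careful bookkeeping of the frame decomposition and a check that the induced simple-current modules are mutually inequivalent and close under fusion into a group matching $D$. Granting Corollary \ref{MC}, the theorem follows at once: each of the three inequivalent codes produces, via \cite[Theorem 6.78]{Lam} and \cite[Table 1]{LS}, a distinct Lie algebra, and each code determines a single VOA, so the Lie algebra of $U_1$ determines $U$ and there are exactly $3$ such holomorphic framed VOAs of central charge $24$.
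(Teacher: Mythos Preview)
Your high-level reduction matches the paper's exactly: condition (iv) together with \cite{BM} forces $D$ to be equivalent to $D_{[10]}$, $D_{[8]}$ or $D_{[7]}$; \cite[Theorem 6.78]{Lam} and \cite[Table 1]{LS} show these yield three pairwise distinct Lie algebras not occurring in Cases (i)--(iii); and the remaining content is Corollary \ref{MC}.

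The gap is in your sketch of Corollary \ref{MC}. You write ``Granting that the base VOA and the group of glued simple currents are forced by $D$, \cite{DM} yields that the extension, hence $U$, is unique up to isomorphism.'' The base code VOA $V_C$ with $C=D^\perp$ is indeed determined by $D$, but the family $\{V^\beta\}_{\beta\in D}$ is \emph{not}: for each $\beta$ there are many irreducible $V_C$-modules $M_C(\beta,\gamma)$, indexed by $\gamma\in\Z_2^n/C$, that are simple currents with $\tau=\beta$ and integral top weight, and different coherent choices could a priori yield non-isomorphic extensions. Proposition \ref{PSCE} only gives uniqueness \emph{after} the isomorphism classes of the $V^\beta$ are fixed; it does not eliminate this freedom. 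Your proposed check that the modules ``close under fusion into a group matching $D$'' is satisfied by many inequivalent families, so it does not suffice.

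The paper resolves this by a dimension count. Two holomorphic framed VOAs $U,V$ with the same $\sfr{1}{16}$-code $D$ differ by a map $\delta\in\mathcal{Q}_D$ (Lemma \ref{delta}), where $\mathcal{Q}_D$ is the space of $\Z_2$-linear maps $D\to\Z_2^n/D^\perp$ compatible with weight integrality; one has $\dim\mathcal{Q}_D=1+\binom{d}{2}$ (Lemma \ref{dS}). The ``trivial'' part of this freedom, realized by the automorphisms $\sigma_\gamma$ of $V_C$, is the image of $\eta:\gamma\mapsto(\beta\mapsto\gamma\cdot\beta)$, and $\dim\Imm(\eta)=\dim(D\cdot D)$. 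The decisive combinatorial input, specific to subcodes of $D^{ex}$, is Proposition \ref{LID}: $\dim(D\cdot D)=\binom{d}{2}+1$. Hence $\Imm(\eta)=\mathcal{Q}_D$, so $\delta=\eta(\gamma)$ for some $\gamma$, giving $U^\beta\cong\sigma_\gamma\circ V^\beta$ for all $\beta$ (Lemma \ref{Lsigma}), and only then does \cite{DM} finish the job. Your idea of relating the three cases through the inclusions $D_{[7]}\subset D_{[8]}\subset D_{[10]}$ does not supply this; the identity $\dim(D\cdot D)=\binom{d}{2}+1$ is what makes the extension rigid.
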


Our main theorem (Theorem \ref{MainT}) will then follows from Propositions \ref{PDGM} and \ref{PCase3} and Theorems \ref{TCase2} and \ref{TCase4}.

\medskip
 The organization of the article is as follows. In Section \ref{pre}, we recall some notions and basic facts  about VOAs and
framed VOAs. In Section \ref{sec:3}, we  study the  framed VOA structures associated to  a fixed $\sfr{1}{16}$-code $D$. We show that
the holomorphic framed VOA structure  is uniquely determined by the $\sfr{1}{16}$-code $D$ if $D$ is a subcode of the exceptional triply even
code $D^{ex}$. In Section \ref{sec:qs},  the isomorphisms between holomorphic VOAs of central charge $24$ associated to some
maximal totally singular subspaces are discussed. We first recall a classification of maximal totally singular subspaces up to certain
equivalence from \cite{LS}. The construction of a VOA $\mathfrak{V}(\mathcal{S})$ from a maximal totally singular subspace
$\mathcal{S}$ is recalled. Some basic properties of the VOA $\mathfrak{V}(\mathcal{S})$ are also reviewed. In Section \ref{sec:4},
the conjugacy classes of certain involutions in lattice VOAs are discussed. The results will then be used in Section \ref{Proof2} to
establish the isomorphisms between some holomorphic VOAs  associated to maximal totally singular subspaces.  In Appendix A, certain
ideals of the weight one subspaces of the VOAs $\mathfrak{V}(\mathcal{S})$ used in Section \ref{Proof2} are described explicitly.

\newpage

\section{Preliminaries}\label{pre}
\begin{center}
{\bf Notations}
\begin{small}
\begin{tabular}{ll}
\ \\
$\langle\ , \ \rangle$& the standard inner product in $\Z_2^n$, $\R^n$ or $(R(V)^3,q_V^3)$.\\
$\mathbf{1}$& the all-one vector in $\Z_2^n$.\\
$\1$& the vacuum vector of a VOA.\\
$\boxtimes$& the fusion product for a VOA.\\
$\langle A\rangle_{\F_2}$& the subspace of $\F_2^n$ spanned by $A$.\\
$\Aut X$& the automorphism group of $X$.\\
$\alpha\cdot\beta$& the coordinatewise product of $\alpha,\beta\in\Z_2^n$.\\
$D\cdot D$& the code ${\rm Span}_{\Z_2}\{\beta\cdot \beta'\mid \beta,\beta'\in D\}$, where $D$ is a binary code.\\
$D^{ex}$& the exceptional triply even code of length $48$.\\
%$\perp $& the orthogonal direct sum of subspaces in a quadratic space.\\
%$A^\perp$& the orthogonal complement of a subspace $A$ in a quadratic space.\\
%$D_{16}^+$& the indecomposable even unimodular lattice of rank $16$,\\
%& whose root lattice is $D_{16}$.\\
%$E_8$& the $E_8$ root lattice, even unimodular lattice of rank $8$.\\
$g\circ M$& the conjugate of a module $M$ for a VOA by an automorphism $g$.\\
$\mathfrak{g}(\Phi)$& the semisimple Lie algebra with the root system $\Phi$.\\
%$g\circ [M]$& the isomorphism class of $g\circ M$.\\
$[M]$& the isomorphism class of a module $M$ for a VOA.\\
$M_C(\alpha,\beta)$& the irreducible module for $V_C$ parametrized by $\alpha\in C^\perp$, $\beta\in\Z_2^n$.\\
$N(\Phi)$& the even unimodular lattice of rank $24$ whose root system is $\Phi$.\\
%$\rho_i$& the $i$-th coordinate projection of a direct sum of quadratic spaces.\\
%$q_V$& the quadratic form on $R(V)$ defined by\\
%& $q_V([M])=0$ and $1$ if $M$ is $\Z$-graded and $(\Z+1/2)$-graded, respectively.\\
$O(R(V),q_V)$& the orthogonal group of the quadratic space $(R(V),q_V)$.\\
$\mathcal{Q}_D$&$\{ \delta:D \to \Z_2^n/ D^\perp\mid \delta \text{ is $\Z_2$-linear and }
(\delta(\be), \mathbf{1}+\be) =0 \text{ for all } \be \in D\}$.\\
%$\Sym_n$& the symmetric group of degree $n$.\\
%$\mathcal{S}$& A maximal totally singular subspace of $R^3$ or $R(V)^3$, where $V=V_{\sqrt2E_8}^+$.\\
$\mathcal{S}(m,k_1,k_2,\varepsilon)$ & the maximal totally singular subspace of $R^3$ defined in theorem \ref{TClassify}.\\
$\mathcal{S}(m,k_1,k_2)$ & the maximal totally singular subspace of $R^3$ defined in theorem \ref{TClassify2}.\\
${\rm supp}(c)$& the support of $c=(c_i)\in\Z_2^n$, that is, the set $\{i\mid c_i\neq0\}$.\\
$\Sym_n$& the symmetric group of degree $n$.\\
%$\mathcal{S}(R)$& the set of singular vectors in $R$\\
%$\mathcal{S}(R)^\times$& the set of non-zero singular vectors in $R$\\
%$\overline{\mathcal{S}(R)}$& the set of non-singular vectors in $R$\\
%${\rm Stab}_G(A)$& the setwise stabilizer of $A$ in a group $G$.\\
%${\rm Stab^{pt}}_G(A)$& the pointwise stabilizer of $A$ in a group $G$.\\
%$(R,q)$& A non-singular quadratic space $R$ of plus type with quadratic form $q$ over $\F_2$.\\
%$(R^k,q^k)$& the orthogonal direct sum of $k$ copies of $(R,q)$.\\
$R(U)$& the set of all isomorphism classes of irreducible modules for a VOA $U$.\\
$(R(V),q_V)$& the $10$-dimensional quadratic space $R(V)$ associated to $V=V_{\sqrt2E_8}^+$.\\
$L(\Phi)$& the root lattice with root system $\Phi$.\\
%$V$&
%A simple rational $C_2$-cofinite self-dual VOA of CFT type, or
%$V$& the VOA $V_{\sqrt2E_8}^+$.\\
$V_C$& the code VOA associated to binary code $C$.\\
$V_L$& the lattice VOA associated with even lattice $L$.\\
$V_L^+$& the fixed point subVOA of $V_L$ with respect to a lift of the $-1$-isometry of $L$.\\
$\tilde{V}_L$& the $\Z_2$-orbifold of $V_L$ associated to the $-1$-isometry of $L$.\\
%$V^3$& the tensor product of $3$ copies of a VOA $V$.\\
$\mathfrak{V}(\mathcal{S})$& the holomorphic VOA associated to a maximal totally singular subspace $\mathcal{S}$.\\
%$\mathfrak{V}(\mathcal{T})$& the module associated to a subset $\mathcal{T}$ of $R(V)^k$.
% or %$R(X)\oplus R(V)$.\\
%$w^k$& the map from $R(V)^k$ to $\{0,1,\dots,2k\}$ (see Section 4.4).\\
%$X$& $X=V_{\sqrt2D_{16}^+}^+$.\\
\end{tabular}
\end{small}
\end{center}

\medskip

\subsection{Vertex operator algebras}
Throughout this article, all VOAs are defined over the field $\C$ of complex numbers. We recall the notion of vertex operator
algebras (VOAs) and modules from \cite{Bo,FLM,FHL}.

A {\it vertex operator algebra} (VOA) $(V,Y,\1,\omega)$ is a $\Z_{\ge0}$-graded
 vector space $V=\bigoplus_{m\in\Z_{\ge0}}V_m$ equipped with a linear map

$$Y(a,z)=\sum_{i\in\Z}a_{(i)}z^{-i-1}\in ({\rm End}(V))[[z,z^{-1}]],\quad a\in V$$
and the {\it vacuum vector} $\1$ and the {\it conformal element} $\omega$
satisfying a number of conditions (\cite{Bo,FLM}). We often denote it by $V$ or
$(V,Y)$.

Two VOAs $(V,Y,\1,\omega)$ and $(V^\prime,Y',\1',\omega')$ are said to be {\it
isomorphic} if there exists a linear isomorphism $g$ from $V$ to $V^\prime$
such that $$ g\omega=\omega'\quad {\rm and}\quad gY(v,z)=Y'(gv,z)g\quad
\text{ for all } v\in V.$$ When $V=V'$,  such a linear isomorphism is called an
{\it automorphism}. The group of all automorphisms of $V$ is called the {\it
automorphism group} of $V$ and is denoted by $\Aut V$.

A {\it vertex operator subalgebra} (or a {\it subVOA}) is a graded subspace of
$V$ which has a structure of a VOA such that the operations and its grading
agree with the restriction of those of $V$ and that they share the vacuum vector.
When they also share the conformal element, we will call it a {\it full subVOA}.

An (ordinary) module $(M,Y_M)$ for a VOA $V$ is a $\C$-graded vector space $M=\bigoplus_{m\in\C} M_{m}$ equipped with a linear map
$$Y_M(a,z)=\sum_{i\in\Z}a_{(i)}z^{-i-1}\in ({\rm End}(M))[[z,z^{-1}]],\quad a\in V$$
satisfying a number of conditions (\cite{FHL}).
We often denote it by $M$ and its isomorphism class by $[M]$.
The {\it weight} of a homogeneous vector $v\in M_k$ is $k$.
A VOA is said to be  {\it rational} if any module is completely reducible.
A rational VOA is said to be {\it holomorphic} if itself is the only irreducible module up
to isomorphism.
A VOA is said to be {\it of CFT type} if $V_0=\C\1$, and is said to be {\it $C_2$-cofinite} if $\dim V/{\rm Span}_\C\{u_{(-2)}v\mid u,v\in V\}<\infty$.

Let $M$ be a module for a VOA $V$ and let $g$ be an automorphism of $V$.
Then the module $g\circ M$ is defined by $(M,Y_{g\circ M})$, where $Y_{g\circ M}(v,z)=Y_M(g^{-1}(v),z)$, $v\in V$.
Note that if $M$ is irreducible then so is $g\circ M$.

Let $V$ be a VOA of CFT type. Then the $0$-th product gives a Lie algebra structure on $V_1$. Moreover, the operators
$v_{(n)}$, $v\in V_1$, $n\in\Z$, define  a representation of the affine Lie algebra associated to $V_1$. Note that $\Aut V$ acts
on the Lie algebra $V_1$ as an automorphism group.

\subsection{Fusion products and simple current extensions}

Let $V^0$ be a simple rational $C_2$-cofinite VOA of CFT type and let $W^1$ and $W^2$ be $V$-modules.
It was shown in \cite{HL} that the $V^0$-module $W^1\boxtimes_{V^0} W^2$, called the {\it fusion product}, exists.
A $V^0$-module $M$ is called a {\it simple current} if for any irreducible $V^0$-module $X$, the fusion product $M\boxtimes_{V^0} X$ is also irreducible.

%\begin{lemma} Let $g$ be an automorphism of $V$.
%Then $(g\circ W^1)\boxtimes_V(g\circ W^2)\cong g\circ (W^1\boxtimes_V W^2)$ as $V$-modules.
%\end{lemma}

Let $\{V^\alpha\mid \alpha\in D\}$ be a set of inequivalent irreducible $V^0$-modules indexed by an abelian group $D$.
A simple VOA $V_D=\bigoplus_{\alpha\in D}V^\alpha$ is called a {\it simple current extension} of $V^0$ if it carries a $D$-grading and every $V^\alpha$ is a simple current.
Note that $V^\alpha\boxtimes_{V^0}V^\beta\cong V^{\alpha+\beta}$.

\begin{proposition}{\rm (\cite[Proposition 5.3]{DM})}\label{PSCE} Let $V^0$ be a simple rational $C_2$-cofinite VOA of CFT type and let $V_D=\bigoplus_{\alpha\in D}V^\alpha$ and $\tilde{V}_D=\bigoplus_{\alpha\in D}\tilde{V}^\alpha$ be simple current extensions of $V^0$.
If $V^\alpha\cong \tilde{V}^\alpha$ as $V^0$-modules for all $\alpha\in D$, then $V_D$ and $\tilde{V}_D$ are isomorphic VOAs.
\end{proposition}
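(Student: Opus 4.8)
The final statement to prove is Proposition~\ref{PSCE}: two simple current extensions of the same base VOA $V^0$ that agree module-by-module are isomorphic as VOAs. The plan is to build the isomorphism degree by degree along the grading group $D$ and then verify it is compatible with the vertex operators. First I would fix, for each $\alpha \in D$, a $V^0$-module isomorphism $\phi_\alpha \colon V^\alpha \simto \tilde V^\alpha$; these exist by hypothesis. Normalizing $\phi_0$ to be the identity on $V^0$ (rescaling is harmless since $V^0$ is simple, so $\End_{V^0}(V^0) = \C$), I would assemble $g = \bigoplus_{\alpha \in D} \phi_\alpha$ as a linear isomorphism $V_D \to \tilde V_D$ that preserves the $D$-grading and the conformal element (the latter because each $\phi_\alpha$ is a $V^0$-module map and $\omega \in V^0$).

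The crux is upgrading this collection of module maps into a genuine VOA isomorphism, i.e.\ showing the $\phi_\alpha$ can be rescaled so that $g Y(u,z) = Y'(gu,z) g$ holds on all of $V_D$. The key input is the theory of intertwining operators: the vertex operator $Y$ restricted to the component $V^\alpha \otimes V^\beta \to V^{\alpha+\beta}$ is an intertwining operator of type $\binom{V^{\alpha+\beta}}{V^\alpha\, V^\beta}$ for $V^0$, and since every $V^\alpha$ is a simple current the corresponding space of intertwining operators is one-dimensional. Hence $\phi_{\alpha+\beta}^{-1} Y'(\phi_\alpha\,\cdot\,,z)\phi_\beta$ and $Y(\,\cdot\,,z)$ are both nonzero intertwining operators of the same type, so they differ by a nonzero scalar $\lambda(\alpha,\beta) \in \C^\times$. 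The task reduces to choosing a rescaling $\phi_\alpha \mapsto c_\alpha \phi_\alpha$ that makes all these scalars equal to $1$ simultaneously.

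I expect the main obstacle to be exactly this coherence problem: the scalars $\lambda(\alpha,\beta)$ need not individually be $1$, and one must show they form a $2$-cocycle (or can be arranged to be one) that is a coboundary, so that a uniform rescaling $c_\alpha$ with $\lambda(\alpha,\beta) = c_{\alpha+\beta}\, c_\alpha^{-1} c_\beta^{-1}$ exists. The associativity and commutativity of the vertex operators on the simple current extensions $V_D$ and $\tilde V_D$ force $\lambda$ to satisfy the cocycle identity, and the normalization $\phi_0 = \id$ together with the fact that $V^0$ acts as expected fixes $\lambda(0,\beta) = \lambda(\alpha,0) = 1$. The nontrivial point is that the associated cohomology obstruction vanishes; this follows because the simple current structure endows $D$ with an abelian group grading whose extension is already realized by both $V_D$ and $\tilde V_D$, so the two cocycles are cohomologous and a common trivialization exists.

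Once the rescaling is performed so that every $\lambda(\alpha,\beta) = 1$, the resulting map $g = \bigoplus_\alpha c_\alpha \phi_\alpha$ satisfies $g Y(u,z) v = Y'(gu,z) gv$ on each graded component and hence on all of $V_D$, and it already sends $\omega$ to $\omega'$ and $\1$ to $\1'$. Therefore $g$ is a VOA isomorphism, completing the proof. The essential difficulty, to reiterate, lies entirely in the cocycle/coboundary argument controlling the scalars; the degree-by-degree construction and the conformal-element compatibility are routine given the one-dimensionality of the relevant intertwining operator spaces guaranteed by the simple current hypothesis.
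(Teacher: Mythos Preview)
The paper does not give its own proof of this proposition; it is simply quoted from \cite[Proposition 5.3]{DM}. Your outline is correct and is essentially the argument in the cited reference: fix $V^0$-module isomorphisms $\phi_\alpha$, use that each $V^\alpha$ is a simple current to see the intertwining operator spaces $\binom{V^{\alpha+\beta}}{V^\alpha\ V^\beta}$ are one-dimensional, extract the comparison scalars $\lambda(\alpha,\beta)\in\C^\times$, and then rescale the $\phi_\alpha$ to kill $\lambda$.

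One point in your sketch deserves sharpening. Your explanation of why the obstruction vanishes (``the two cocycles are cohomologous'') is slightly off: there is a single $2$-cocycle $\lambda$ in play, and what must be shown is that $\lambda$ itself is a coboundary. Associativity of $Y$ and $Y'$ gives the cocycle identity for $\lambda$, and skew-symmetry of $Y$ and $Y'$ forces $\lambda(\alpha,\beta)=\lambda(\beta,\alpha)$. A symmetric $2$-cocycle on an abelian group $D$ with values in $\C^\times$ classifies an \emph{abelian} central extension of $D$ by $\C^\times$; since $\C^\times$ is divisible (hence injective as an abelian group), every such extension splits, so $\lambda$ is a coboundary. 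With that clarification your argument is complete.
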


\subsection{Lattice VOAs and $\Z_2$-orbifolds}
Let $L$ be an even unimodular lattice and let $V_L$ be the lattice VOA associated with $L$ (\cite{Bo,FLM}).
Then $V_L$ is holomorphic (\cite{Do}).
Let $\theta\in \aut V_L$ be a lift of $-1\in\aut L$ and let $V_L^+$ denote the subVOA of $V_L$ consisting of vectors in $V_L$ fixed by $\theta$.
Let $V_L^T$ be a unique irreducible $\theta$-twisted module for $V_L$ and let $V_L^{T,+}$ be the irreducible $V_L^+$-submodule of $V_L^T$ with integral weights. %Then one can show (cf. \cite{FLM,DGH,LY}) that
Set $$ \tilde{V}_L = V_L^+ \oplus V_L^{T,+}.$$ Then $ \tilde{V}_L$
has a unique holomorphic VOA structure by extending its
$V_L^+$-module structure, up to isomorphism (\cite{FLM,DGM}).
The VOA  $\tilde{V}_L$ is often called the {\it $\Z_2$-orbifold} of $V_L$.
More generally, for an involution $g$ in $\Aut V_L$, we can consider the same procedure.
If we obtain a VOA as a simple current extension of the subVOA $V_L^g$ of $V_L$ fixed by $g$, we call it the $\Z_2$-orbifold of $V_L$ associated to $g$.
%\begin{remark}\label{Rem2}
%If two involutions $g$ and $g'$ in $\Aut V_L$ are conjugate, then the $\Z_2$-orbifolds of $V_L$ associated to $g$ and $g'$ are isomorphic.
%\end{remark}

\subsection{Code VOAs and framed VOAs}

In this subsection, we review the notion of code VOAs and framed VOAs from \cite{M1,M2,DGH,M3}.

Let $\vir=\bigoplus_{n\in\Z}\C L_n \oplus \C \mathbf{c}$ be the Virasoro algebra.
%That means  the $L_n$ satisfy the commutator relations:
%\[
%\begin{split}
% [L_m,L_n]=(m-n)L_{m+n}+ \frac{1}{12}(m^3-m) \delta_{m+n,0} \mathbf{c},\quad\text{ and }\quad
% [L_m,\mathbf{c}]=0.
%\end{split}
%\]
For any $c,h \in \C$, we denote by $L(c,h)$ the irreducible highest weight
module of $\vir$ with central charge $c$ and highest weight $h$.
It was shown in \cite{FZ} that $L(c,0)$ has a natural VOA structure. We call it the
\emph{simple Virasoro VOA} with central charge $c$.

\begin{definition}\label{df:3.1}
Let $V=\bigoplus_{n=0}^\infty V_n$ be a VOA. An element $e\in V_2$ is  called
an {\it Ising vector}
% or  a  {\it Virasoro vector of central charge $1/2$}
if the
subalgebra $\vir(e)$ generated by $e$ is isomorphic to $L( \shf,0)$ and $e$ is
the conformal element of $\vir(e)$. Two Ising vectors $u,v\in V$ are said to be
{\it orthogonal} if $[Y(u,z_1), Y(v,z_2)]=0$.
\end{definition}

\begin{remark}
It is well-known that $L(\shf,0)$ is rational
% i.e., all  $L(\shf, 0)$-modules are completely reducible,
and has only three inequivalent irreducible modules
$L(\shf,0)$, $L(\shf,\shf)$ and $L(\shf,\sfr{1}{16})$. The fusion products of
$L(\shf,0)$-modules are computed in \cite{DMZ}:
\begin{equation}\label{eq:3.1}
\begin{array}{l}
  L(\shf,\shf)\fusion L(\shf,\shf)=L(\shf,0),
  \q
  L(\shf,\shf)\fusion L(\shf,\sfr{1}{16})=L(\shf,\sfr{1}{16}),
  \vsb\\
  L(\shf,\sfr{1}{16})\fusion L(\shf,\sfr{1}{16})=L(\shf,0)\oplus L(\shf,\shf).
\end{array}
\end{equation}
\end{remark}

\begin{definition}\label{df:3.2}
  (\cite{DGH})
A simple VOA $V$ is said to be \emph{framed} if there exists a set $\{e^1,
\dots,e^n\}$ of mutually orthogonal Ising vectors of $V$ such that their sum
$e^1+\cds +e^n$ is equal to the conformal element of $V$.   The subVOA  $T_n$
generated by $e^1,\dots,e^n$ is thus isomorphic to
  $L(\shf,0)^{\otimes n}$ and  is called
  a \emph{Virasoro frame} of $V$.
\end{definition}

\begin{theorem}{\rm (\cite{DGH})} Any framed VOA is rational, $C_2$-cofinite, and of CFT type.
\end{theorem}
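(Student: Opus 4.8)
The plan is to reduce everything to the Virasoro frame. By Definition \ref{df:3.2} a framed VOA $V$ is simple and contains $T_n \cong \ising^{\otimes n}$ as a full subVOA, so $V$ is a module over $T_n$ and the two share the conformal element $\om = e^1 + \cds + e^n$. I would begin by recording that $\ising$ is rational, $C_2$-cofinite, and of CFT type: it is the $c = \shf$ minimal model, with exactly the three irreducible modules $\ising$, $L(\shf,\shf)$, $L(\shf,\sfr{1}{16})$ recalled above, and each of these three properties passes to a finite tensor product. Hence $T_n$ is itself rational, $C_2$-cofinite, and of CFT type.

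Next I would decompose $V$ as a $T_n$-module. Since $T_n$ is rational, $V$ is a direct sum of irreducible $T_n$-modules, each of the form $\bigotimes_{i=1}^n L(\shf, h_i)$ with $h_i \in \{0, \shf, \sfr{1}{16}\}$; there are only $3^n$ such isomorphism types, and each occurs with finite multiplicity (bounded by $\dim V_{\sum_i h_i}$, since an irreducible $T_n$-module is generated by its lowest-weight space, which sits in degree $\sum_i h_i$), so the sum is finite. The lowest weight $\sum_i h_i$ vanishes precisely when every $h_i = 0$, i.e. for the summand $T_n$ itself; thus $V_0 = (T_n)_0 = \C\1$ and $V$ is of CFT type. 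For $C_2$-cofiniteness, $V$ is a finite direct sum of $C_2$-cofinite $T_n$-modules over the $C_2$-cofinite full subVOA $T_n$, and this property is inherited by such an extension.

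The main obstacle is rationality, since complete reducibility of $V$ itself as a $T_n$-module says nothing about arbitrary $V$-modules, and the implication from $C_2$-cofiniteness to rationality is not available in general. I would handle this in two stages following the two codes attached to the frame. The summands with all $h_i \in \{0, \shf\}$ assemble into a code subVOA $V_C \subseteq V$; because $L(\shf,\shf)$ is a simple current (by \eqref{eq:3.1}, $L(\shf,\shf)\fusion L(\shf,\shf) = \ising$ and $L(\shf,\shf)\fusion L(\shf,\sfr{1}{16}) = L(\shf,\sfr{1}{16})$), $V_C$ is a simple current extension of $T_n$ and is therefore rational. One then views $V$ as an extension of $V_C$ by summands in which some $h_i = \sfr{1}{16}$, classifies the irreducible $V$-modules by inducing from $V_C$-modules, and verifies that an arbitrary $V$-module is completely reducible by restricting it to the rational subVOA $T_n$ and reassembling. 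The delicate point is exactly that $L(\shf,\sfr{1}{16})$ is \emph{not} a simple current ($L(\shf,\sfr{1}{16})\fusion L(\shf,\sfr{1}{16}) = \ising \oplus L(\shf,\shf)$), so the $\sfr{1}{16}$-level analysis cannot rest on a bare simple-current argument and instead requires the finer representation theory of $T_n$ and its induced modules.
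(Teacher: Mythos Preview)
The paper does not prove this theorem at all: it is stated with a bare citation to \cite{DGH}, so there is no in-paper argument to compare against. Your sketch is therefore an attempt to reconstruct the cited result, and for two of the three claims it succeeds. The CFT-type argument is correct as written, and the $C_2$-cofiniteness argument is correct once one invokes a standard extension lemma (a VOA which is a finite sum of ordinary modules over a $C_2$-cofinite full subVOA is itself $C_2$-cofinite); you should name that lemma rather than assert ``this property is inherited.''

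The rationality paragraph has a real gap, and you have located it yourself. Saying that one ``verifies that an arbitrary $V$-module is completely reducible by restricting it to the rational subVOA $T_n$ and reassembling'' is exactly the step that does not follow formally: complete reducibility over $T_n$ gives a decomposition of a $V$-module $M$ into $T_n$-irreducibles, but nothing you have written explains why the $V$-action respects any splitting. The difficulty is precisely the one you flag, that the $\sfr{1}{16}$-summands are not simple currents for $T_n$, so the passage from $V_C$ to $V$ is not a simple-current extension \emph{of} $T_n$. The clean way to close the gap, using results already recorded in this paper, is to invoke Theorem~\ref{PFrameSCE} (from \cite{LY}): $V=\bigoplus_{\beta\in D}V^\beta$ is a $D$-graded simple current extension of $V^0\cong V_C$. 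Combined with your (correct) observation that $V_C$ is a simple-current extension of the regular VOA $T_n$ and hence regular, this gives rationality of $V$ by the general fact that simple current extensions of regular VOAs are regular. Note, however, that \cite{LY} postdates \cite{DGH}, so this is a streamlined modern route rather than the original argument; the original proof in \cite{DGH} carries out the ``finer representation theory'' you allude to directly.
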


Given a framed VOA $V$ with a Virasoro frame $T_n$, one can associate two binary codes
$C$ and $D$ of length $n$ to $V$ and $T_{n}$ as follows:
Since $T_n= L(\shf,0)^{\otimes n}$ is rational, $V$ is a completely reducible
$T_n$-module. That is,
\begin{equation*}\label{2.1}
V \cong \bigoplus_{h_i\in\{0,\sfr{1}{2},\sfr{1}{16}\}}
m_{h_1,\ldots, h_{n}}L(\shf,h_1)\otimes\cdots\otimes L(\shf,h_n),
\end{equation*}
where the nonnegative integer $m_{h_1,\ldots,h_n}$ is the multiplicity of
$L(\shf,h_1)\otimes\cdots\otimes L(\shf, h_n)$ in $V$.
It was shown in \cite{DMZ} that all the multiplicities are finite and that $m_{h_1,\ldots,h_n}$ is at most $1$ if all $h_i$ are different from $\sfr{1}{16}$.

%\begin{definition}\label{tauword}
Let $U\cong L(\shf,h_1)\otimes\cdots\otimes L(\shf,h_n) $ be an irreducible
module for $T_{n}$. Let $\tau(U)$ denote the binary word $\beta=(\beta_1, \dots,\beta_n)\in
\Z_2^n$ such that
\begin{equation}%\label{eq:1.1}
\beta_i=
\begin{cases}
0 & \text{ if } h_i=0 \text{ or } \shf,\\
1 & \text{ if } h_i=\sfr{1}{16}.
\end{cases}
\end{equation}
%\end{definition}
%We call it $\sfr{1}{16}$-word (or $\tau$-word) of $U$.

For any $\beta\in \Z_2^n$, denote by $V^\beta$ the sum of all irreducible
submodules $U$ of $V$ such that $\tau(U)=\beta$.
%\begin{definition}
Set $ D:=\{\beta\in \Z_2^n \mid V^\beta \ne 0\}$. Then $D$ becomes a
binary code of length $n$.
We call $D$ the {\it $\sfr{1}{16}$-code} with respect to $T_n$.
Note that $V$ can be written as a sum
$$V=\bigoplus_{\beta\in D} V^\beta.$$
%\end{definition}
%\medskip
For any $\alpha=(\alpha_1,\ldots,\alpha_n)\in \Z_2^n$, let
$M_\alpha$ denote the $T_n$-submodule $m_{h_1,\ldots,h_n}L(\shf,h_1)\otimes\cdots\otimes L(\shf,h_n) $ of $V$, where
$h_i=\shf$ if $\alpha_i=1$ and $h_i=0$ elsewhere.  Note that $m_{h_1,\ldots,h_n}\leq
1$ since $h_i\neq \sfr{1}{16}$.
Set %\begin{definition}
$C:=\{\alpha\in \Z_2^n \mid M_\alpha \neq 0\}$. Then $C$ also forms a binary code
%We call $C$ the {\it $\sfr{1}{2}$-code} with respect to $T_n$.
and $V^0=\bigoplus_{\alpha\in C}M_\alpha$.
%\end{definition}
The code VOA $V_C$ associated to a binary code $C$ was defined in \cite{M1}.

\begin{definition}\label{CodeVOA} (\cite{M1})
A framed VOA $V$ is called a {\it code VOA} if $D=0$, equivalently, $V=V^0$.
\end{definition}

\begin{proposition} {\rm (\cite[Theorem 4.3]{M1}, \cite[Theorem 4.5]{M2}, \cite[Proposition 2.16]{DGH})} For any even code $C$, there exists the unique code VOA isomorphic to $\bigoplus_{\alpha\in C}M_\alpha$, up to isomorphism.
\end{proposition}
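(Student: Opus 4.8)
The plan is to realize $\bigoplus_{\alpha\in C}M_\alpha$ as a simple current extension of $T_n=L(\shf,0)^{\otimes n}$ and to split the statement into an existence part and a uniqueness part. First I would record the structural facts about $T_n$: being a tensor product of copies of the simple rational $C_2$-cofinite CFT-type VOA $L(\shf,0)$, it is itself simple, rational, $C_2$-cofinite and of CFT type, and its irreducible modules are the tensor products $\bigotimes_i L(\shf,h_i)$. The modules $M_\alpha$, $\alpha\in\Z_2^n$, are exactly those irreducibles with every $h_i\in\{0,\shf\}$, so they are pairwise inequivalent and their isomorphism type is determined by $\alpha$. Applying the fusion rules \eqref{eq:3.1} factor by factor gives $M_\alpha\boxtimes_{T_n}M_\beta\cong M_{\alpha+\beta}$; in particular each $M_\alpha$ is a simple current and the classes $\{[M_\alpha]\}$ form a group isomorphic to $\Z_2^n$ under fusion.

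The hypothesis that $C$ is even enters through the conformal weights. The lowest weight of $M_\alpha$ equals $\wt(\alpha)/2$, which is a nonnegative integer exactly when $\wt(\alpha)$ is even; hence for even $C$ every summand $M_\alpha$ with $\alpha\in C$ has integral lowest weight, and $\bigoplus_{\alpha\in C}M_\alpha$ is $\Z_{\geq 0}$-graded, a necessary condition for any VOA structure. A short weight count also shows the intertwining operators among the $M_\alpha$ carry only integral powers of $z$, since the defect $\wt(\alpha)/2+\wt(\beta)/2-\wt(\alpha+\beta)/2$ equals $|\supp(\alpha)\cap\supp(\beta)|\in\Z$, so no fractional monodromy appears.

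For existence I would invoke Miyamoto's construction: using the one-dimensional spaces of intertwining operators of type $\binom{M_{\alpha+\beta}}{M_\alpha\ M_\beta}$, one assembles an extended vertex operator $Y$ on $\bigoplus_{\alpha\in C}M_\alpha$ and normalizes the structure constants so that the Jacobi identity holds. I expect this to be the main obstacle: the freedom to rescale each intertwining operator must be used to trivialize the relevant associativity/commutativity $2$-cocycle, and it is precisely the integrality secured above, equivalently the evenness of $C$, that forces the obstruction to vanish and produces a genuine VOA rather than merely an abelian intertwining algebra. Verifying rationality, $C_2$-cofiniteness and CFT type of the result is then automatic from the general theory of framed VOAs.

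Uniqueness is the soft part. Any code VOA isomorphic to $\bigoplus_{\alpha\in C}M_\alpha$ is, by Definition \ref{CodeVOA} and the decomposition $V^0=\bigoplus_{\alpha\in C}M_\alpha$, a simple current extension of $T_n$ whose $\alpha$-graded piece is $T_n$-isomorphic to the fixed module $M_\alpha$. Thus any two such extensions have the same graded components as $T_n$-modules, and Proposition \ref{PSCE} applies verbatim to produce a VOA isomorphism between them. Combining the three steps yields existence and uniqueness of the code VOA up to isomorphism.
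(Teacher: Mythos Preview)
The paper does not actually prove this proposition; it is quoted as a foundational result with citations to Miyamoto \cite{M1,M2} and Dong--Griess--H\"ohn \cite{DGH}, and no argument is given in the text. Your sketch is a reasonable modern outline of both halves. For existence you correctly locate the heart of the matter---choosing scalars for the one-dimensional intertwiner spaces so that associativity holds, i.e.\ trivializing the relevant $2$-cocycle---and sensibly defer the verification to Miyamoto's construction; your remark that ``evenness of $C$ forces the obstruction to vanish'' is the right slogan, though it hides the actual computation (the cocycle on $\Z_2^n$ is nontrivial and one must check it restricts to a coboundary on any even subcode). For uniqueness your route via Proposition~\ref{PSCE} is cleaner than the original arguments in the cited papers, which predate the Dong--Mason uniqueness theorem for simple current extensions; the hypotheses on $T_n$ are routine, the $C$-grading and the simple-current property of each $M_\alpha$ are automatic from the fusion rules, and simplicity of any VOA structure on $\bigoplus_{\alpha\in C}M_\alpha$ follows from the inequivalence of the graded pieces. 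So your uniqueness argument is a genuine streamlining relative to the sources the paper cites, while your existence argument is essentially a pointer to those same sources.
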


%\begin{remark}
%The VOA $V^0$  is often called the {\it code VOA} associated to $C$  and is denoted
%by $V_C$ (\cite{M1}).
%For a coset $\delta+C$ of $C$ in $\Z_2^n$, we also
%denote
%$$M_{\delta+C}= \bigoplus_{c\in\delta+ C} M_c.$$ In this case, $M_{\delta+C}$
%is an irreducible $V_C$-module (cf.\ \cite{M2}).
%\end{remark}
Summarizing, there exists a pair of binary codes $(C,D)$ such that
\[
V=\bigoplus_{\be\in D} V^\be \quad \text{ and } \quad V^0=\bigoplus_{\alpha\in C}M_\alpha.
\]
%The codes $(C,D)$ are called the {\it structure codes} of a framed VOA $V$
%associated to the frame $T_{n}$.
%We also call the code $D$ the $\frac{1}{16}$-code and the code $C$ the
%$\frac{1}2$-code of $V$ with respect to $T_n$.
Note that all $V^\be$, $\be\in D$ , are irreducible $V^0$-modules.

%\medskip

Since $V$ is a VOA, its weights are integers and we have the lemma.
\begin{lemma}\label{Lwt}
\begin{enumerate}[{\rm (1)}]
\item The code $D$ is triply even, i.e., $\wt (\beta)\equiv 0\mod 8$ for all $\beta \in D$.

\item The code $C$ is even.
\end{enumerate}
\end{lemma}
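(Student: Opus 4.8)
The plan is to exploit the single fact already highlighted in the text, namely that $V$, being a VOA, is $\Z_{\geq0}$-graded, so every homogeneous vector of $V$ has integral weight; the two assertions will then follow from computing the fractional parts of the conformal weights of the Virasoro constituents. Recall that for each $h\in\{0,\shf,\sfr{1}{16}\}$ the weights occurring in the Ising module $L(\shf,h)$ all lie in $h+\Z_{\geq0}$, so the lowest weight of a tensor product $L(\shf,h_1)\otimes\cdots\otimes L(\shf,h_n)$ equals $h_1+\cdots+h_n$ and every weight of this module lies in $(h_1+\cdots+h_n)+\Z_{\geq0}$. In particular, whenever such a module occurs as a $T_n$-submodule of $V$ with nonzero multiplicity, its lowest-weight vector is a homogeneous vector of $V$, and hence $h_1+\cdots+h_n\in\Z$.

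For part (2), I would take $\al\in C$, so that $M_\al\neq0$. By definition $M_\al$ is built from the exponents $h_i=\shf$ for $i\in\supp(\al)$ and $h_i=0$ otherwise, giving lowest weight $h_1+\cdots+h_n=\shf\,\wt(\al)$. Integrality forces $\wt(\al)$ to be even, which is exactly the evenness of $C$.

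For part (1), I would take $\be\in D$, so that $V^\be\neq0$, and pick an irreducible constituent $L(\shf,h_1)\otimes\cdots\otimes L(\shf,h_n)$ of $V^\be$ of nonzero multiplicity. By the definition of $\tau$, one has $h_i=\sfr{1}{16}$ exactly for $i\in\supp(\be)$, while $h_i\in\{0,\shf\}$ for $i\notin\supp(\be)$. Writing $k$ for the number of indices $i\notin\supp(\be)$ with $h_i=\shf$, the lowest weight of this constituent is $\sfr{1}{16}\wt(\be)+\shf k$. Integrality, after multiplying by $16$, gives $\wt(\be)+8k\equiv0\pmod{16}$, hence $\wt(\be)\equiv0\pmod8$ regardless of the parity of $k$. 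This is the triple evenness of $D$.

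The argument is elementary, so there is no serious obstacle; the only point requiring care is the congruence bookkeeping in part (1), where one must observe that both parities of the auxiliary count $k$ lead to $\wt(\be)\equiv0\pmod8$ (indeed to $\wt(\be)\equiv0$ or $8\pmod{16}$), so that the conclusion holds uniformly. One should also note that it suffices to test integrality on the lowest-weight vector of each constituent, since the fractional part of the weight is constant across the whole Virasoro module, and that no appeal to the multiplicity bounds of \cite{DMZ} is needed beyond the mere existence of a nonzero constituent for each $\be\in D$ and $\al\in C$.
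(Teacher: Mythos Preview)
Your proof is correct and is precisely the argument the paper has in mind: the lemma is stated immediately after the sentence ``Since $V$ is a VOA, its weights are integers and we have the lemma,'' with no further details given, so your computation of the lowest weights $\shf\wt(\al)$ and $\sfr{1}{16}\wt(\be)+\shf k$ and the ensuing congruences simply fills in the intended one-line justification.
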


The following theorems are well-known.

\begin{theorem} {\rm (\cite[Theorem 2.9]{DGH} and
\cite[Theorem 6.1]{M3})}
Let $V$ be a framed VOA with binary codes $(C,D)$. Then, $V$ is holomorphic
if and only if $C=D^\perp$.
\end{theorem}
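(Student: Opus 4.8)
The plan is to reduce the statement to a dimension count in the modular tensor category of modules, exploiting that $V$ is built from $T_n=L(\shf,0)^{\otimes n}$ by two successive simple current extensions. First I would record the tower $T_n\subseteq V_C=V^0\subseteq V$: the code VOA $V^0=\bigoplus_{\alpha\in C}M_\alpha$ is a $C$-graded simple current extension of $T_n$ (each $M_\alpha$ is a tensor product of copies of $L(\shf,0)$ and $L(\shf,\shf)$, hence of quantum dimension $1$, hence a simple current), while $V=\bigoplus_{\beta\in D}V^\beta$ is a $D$-graded simple current extension of $V^0$ by the structure theory of framed VOAs. Since every framed VOA is rational, $C_2$-cofinite and of CFT type, all three VOAs carry modular tensor categories of modules, and $V$ is holomorphic precisely when this category is trivial, i.e. when its global (categorical) dimension equals $1$ (using that quantum dimensions are always at least $1$).

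The engine of the argument is the multiplicativity of the global dimension under simple current extension: if $W=\bigoplus_{g\in G}W^g$ is a $G$-graded simple current extension of a simple rational $C_2$-cofinite VOA $W^0$, then $\bigoplus_g W^g$ is a connected \'etale algebra of categorical dimension $|G|$ (each $W^g$ has quantum dimension $1$), and the category of its local modules, which is exactly the module category of $W$, has global dimension equal to that of $W^0$ divided by $|G|^2$. I would first settle the base case: by the fusion rules in \eqref{eq:3.1} the three irreducibles of $L(\shf,0)$ have quantum dimensions $1,1,\sqrt{2}$, so $L(\shf,0)$ has global dimension $4$ and thus $T_n$ has global dimension $4^n$. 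Applying the multiplicativity twice then yields global dimension $4^n/|C|^2$ for $V^0=V_C$ and $4^n/(|C|^2|D|^2)$ for $V$.

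It remains to combine this with the automatic inclusion $C\subseteq D^\perp$: because each $V^\beta$ is a genuine $V_C$-module, the intertwining operators realizing the $D$-grading are mutually local with the vertex operators of $V^0=\bigoplus_{\alpha\in C}M_\alpha$, which forces $\langle\alpha,\beta\rangle=0$ for all $\alpha\in C$, $\beta\in D$. Now $V$ is holomorphic if and only if $4^n/(|C|^2|D|^2)=1$, i.e. if and only if $|C|\,|D|=2^n$; since $|D^\perp|=2^n/|D|$, this says exactly $|C|=|D^\perp|$, and together with $C\subseteq D^\perp$ it is equivalent to $C=D^\perp$, proving both directions.

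I expect the main obstacle to be the two structural inputs rather than the arithmetic: namely (a) justifying the multiplicativity of the global dimension, which rests on identifying the extension as a connected \'etale algebra and invoking the Kirillov--Ostrik dimension formula for its category of local modules, and (b) establishing $C\subseteq D^\perp$ cleanly from mutual locality, equivalently from the compatibility of Miyamoto's $\tau$-involutions with the $V_C$-module structure of each $V^\beta$. If one prefers to stay strictly within the VOA framework of the cited references, the same count can instead be carried out by classifying the irreducible $V_C$-modules $M_C(\alpha,\beta)$ directly and analyzing which induced $V$-modules coincide; there the main difficulty shifts to controlling the isomorphisms among the induced modules, but the resulting numerical criterion $|C|\,|D|=2^n$, and hence the conclusion $C=D^\perp$, is the same.
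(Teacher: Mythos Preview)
The paper does not give its own proof of this theorem; it is quoted verbatim as a known result from \cite{DGH} and \cite{M3}, so there is nothing in-paper to compare your argument against.

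Your categorical argument is sound in outline and genuinely different from the original proofs. The dimension count is correct: the Ising category has global dimension $4$, so $T_n$ has global dimension $4^n$, and two successive simple current extensions by $C$ and then $D$ divide by $|C|^2$ and $|D|^2$ respectively, giving $4^n/(|C|^2|D|^2)$. Combined with $D\subset C^\perp$ (which is implicit in the paper's parametrisation of irreducible $V_C$-modules by pairs $(\beta,\gamma)$ with $\beta\in C^\perp$), the criterion $|C|\,|D|=2^n$ is equivalent to $C=D^\perp$. The trade-off is that your route imports heavy later machinery: Huang's modularity theorem to get a genuine MTC, the Kirillov--Ostrik / DMNO dimension formula for local modules of a connected \'etale algebra, and either pseudo-unitarity or a positivity result (e.g.\ Dong--Jiao--Xu) to know that global dimension $1$ forces a unique simple object. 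None of this was available to \cite{DGH}, and only partially to \cite{M3}; their arguments proceed instead by a direct analysis of irreducible $V_C$-modules and induction/restriction along $T_n\subset V_C\subset V$, essentially the alternative you sketch in your final paragraph. Your approach is more conceptual and generalises immediately to other towers of simple current extensions; theirs is self-contained within the framed VOA formalism.
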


\begin{theorem} {\rm (\cite[Theorem7]{LY})}\label{PFrameSCE} Let $V=\bigoplus_{\beta\in D}V^\beta$ be a framed VOA.
Then $V$ is a $D$-graded simple current extension of $V^0$.
\end{theorem}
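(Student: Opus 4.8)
The plan is to verify that the decomposition $V=\bigoplus_{\beta\in D}V^\beta$ satisfies the two defining conditions of a $D$-graded simple current extension of $V^0$, as recalled before Proposition \ref{PSCE}: first, that each $V^\beta$ is an irreducible $V^0$-module and the sum is $D$-graded (i.e.\ $V^\beta\boxtimes_{V^0}V^\gamma\cong V^{\beta+\gamma}$); and second, that each $V^\beta$ is a simple current. The irreducibility of each $V^\beta$ as a $V^0$-module has already been noted in the excerpt, so the essential content is the simple-current property together with the matching of the fusion grading to the additive structure of the code $D$.

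First I would recall that $V^0=\bigoplus_{\alpha\in C}M_\alpha$ is a code VOA and is rational, $C_2$-cofinite and of CFT type, so the fusion product $\boxtimes_{V^0}$ exists and the irreducible $V^0$-modules are well understood. The key structural input is that the irreducible $V^0$-modules are parametrized, via the $\tfr{1}{16}$-labelling $\tau$, by cosets of $C^\perp=D^{\perp\perp}$ together with an element of $D$: concretely, each $V^\beta$ lies over the single value $\tau=\beta$, and its restriction to the Virasoro frame $T_n$ is governed by the $L(\shf,\sfr{1}{16})$-fusion rule in \eqref{eq:3.1}. Using the componentwise fusion rules for $L(\shf,0)$-modules, I would compute $V^\beta\boxtimes_{V^0}V^\gamma$ tensor-factor by tensor-factor; the third relation in \eqref{eq:3.1}, $L(\shf,\sfr{1}{16})\fusion L(\shf,\sfr{1}{16})=L(\shf,0)\oplus L(\shf,\shf)$, shows that the $\tfr{1}{16}$-type adds modulo $2$, which forces $\tau(V^\beta\boxtimes_{V^0}V^\gamma)=\beta+\gamma$ and hence identifies the fusion product with $V^{\beta+\gamma}$.

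The main obstacle is establishing that each $V^\beta$ is genuinely a simple current, that is, that $V^\beta\boxtimes_{V^0}X$ is irreducible for \emph{every} irreducible $V^0$-module $X$, not merely for the $X=V^\gamma$ appearing inside $V$. Here one cannot argue purely at the level of the frame $T_n$, because over $T_n$ the $\sfr{1}{16}$-fusion produces a direct sum; the point is that after passing to the code VOA $V^0$ the extra multiplicities collapse. I would handle this by invoking the theory of the quadratic form and the associated abelian group of irreducible $V^0$-modules: the modules $M_C(\alpha,\beta)$ form a group under $\boxtimes_{V^0}$ (the relevant fusion algebra is that of an abelian intertwining algebra attached to the code), and the generators coming from the $\beta$-grading are precisely the invertible (simple current) elements. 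Concretely I would show that each $V^\beta$ has quantum dimension one, or equivalently exhibit an inverse module $V^{\beta}$ with $V^\beta\boxtimes_{V^0}V^{\beta}\cong V^0$, which together with rationality forces invertibility and hence the simple-current property. Once every $V^\beta$ is known to be a simple current and the grading is shown to be by $D$ as above, the decomposition $V=\bigoplus_{\beta\in D}V^\beta$ is by definition a $D$-graded simple current extension of $V^0$, completing the proof.
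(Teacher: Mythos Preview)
The paper does not supply its own proof of this theorem: it is quoted as an imported result from \cite[Theorem~7]{LY}, with no argument given. So there is no in-paper proof against which to compare your proposal directly; one can only ask whether your outline stands on its own and how it relates to what is actually done in \cite{LY}.

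Your outline correctly isolates the decisive point --- the simple-current property of each $V^\beta$ over $V^0$ --- and correctly observes that the framewise fusion \eqref{eq:3.1} alone cannot settle it, since $L(\shf,\sfr{1}{16})\boxtimes L(\shf,\sfr{1}{16})$ is reducible. But the resolution you offer is essentially circular. Asserting that ``the modules $M_C(\alpha,\beta)$ form a group under $\boxtimes_{V^0}$'' is precisely the statement that every irreducible $V^0$-module is a simple current; invoking it assumes what is to be proved. The alternative of ``exhibiting an inverse with $V^\beta\boxtimes_{V^0}V^\beta\cong V^0$'' has the same problem: from the VOA structure on $V$ one obtains only a nonzero intertwining operator of type $\binom{V^0}{V^\beta\ V^\beta}$, i.e.\ that $V^0$ occurs as a summand of the fusion product, not that it is the whole thing. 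The quantum-dimension route is valid in principle, but you give no mechanism for actually computing $\dim_q V^\beta$; for code VOAs this is no easier than computing the fusion rules themselves.

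The argument in \cite{LY} does not proceed by an abstract invertibility principle. It uses the explicit module theory of code VOAs --- the parametrization $M_C(\beta,\gamma)$ of Theorem~\ref{thm:4.3}, the induced-module description, and a direct analysis of the intertwining operators governed by the Hamming/$\Z_2$-structure of $(C,D)$ --- to show that the fusion of two irreducibles is a single irreducible. In other words, the simple-current property is established by a concrete, combinatorial computation of the fusion rules of $V_C$, not by appealing to a pre-existing group structure. Your proposal would need to supply that computation (or an honest quantum-dimension calculation independent of it) to close the gap.
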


%In \cite{LY}, the structure of a general framed VOA has been studied in detail%In particular, the following is established (see \cite[Theorem 10]{LY}).

%\begin{theorem}\label{tecode}
%Let $D$ be a linear binary code of length $16k, k\in \Z_{>0}$.  Then $D$ can be
%realized as the $\frac{1}{16}$-code of a holomorphic framed VOA of central
%charge $8k$ if and only if (1) $D$ is triply even and (2)  the all-one vector
%$(1^{16k})\in D$.
%\end{theorem}
%\newpage

\subsection{Representation theory of code VOAs}
In this subsection, we review representation theory of code VOAs from \cite{M2,M3,DGL,LY}.

Let $C$ be an even binary code of length $n$ and $V_C$ the code VOA associated to $C$.
Let us recall a parametrization of irreducible $V_C$-modules by codewords from \cite[Section 4.2]{LY}.
Let $\beta\in C^\perp$ and $\gamma\in \Z_2^n$.
We define a weight vector
$h_{\beta,\gamma}=(h^1_{\beta,\gamma},\dots,h^n_{\beta,\gamma})$,
$h_{\beta,\gamma}^i \in \{ 0,\sfr{1}{2},\sfr{1}{16}\}$ by
$$
  h^i_{\beta,\gamma}:=
  \begin{cases}
  \dfrac{1}{16} & \text{  if } \beta_i=1,
  \vsb\\
  \dfrac{\gamma_i}2 & \text{ if } \beta_i=0.
  \end{cases}
$$
Let
$$
  L(h_{\beta,\gamma}):= L(\shf,h^1_{\beta,\gamma})\tensor \cds
  \tensor L(\shf,h^n_{\beta,\gamma})
$$
be the irreducible $L(\shf,0)^{\tensor n}$-module with the weight
$h_{\beta,\gamma}$.
Let $H$ be a maximal self-orthogonal subcode of $C_\beta=\{\alpha\in C\mid {\rm supp}(\alpha)\subset {\rm supp}(\beta)\}$.
Then there exists an irreducible character $\tilde{\chi}_\gamma$ of the central extension of $H$ such that $L(h_{\beta,\gamma})\otimes\tilde{\chi}_\gamma$ is an irreducible $V_H$-module.
Moreover, we obtain an irreducible $V_C$-module $M_C(\beta,\gamma)$ as its induced module.
%$$M_C(\beta,\gamma)=\ind_{V_H}^{V_C}(L(h_{\beta,\gamma}),\tilde{\chi}_{\gamma})$$ is an irreducible $V_C$-module.

\begin{theorem}\label{thm:4.3} {\rm (\cite[Theorem 5.3]{M2})} Every irreducible $V_C$-module is isomorphic to an induced module $M_C(\beta,\gamma)$ and its module structure is
  uniquely determined by the structure of a $V_H$-submodule.
\end{theorem}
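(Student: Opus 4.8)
The plan is to classify the irreducible $V_C$-modules by restricting them to the rational subVOA $T_n=L(\shf,0)^{\otimes n}$ and then tracking how the graded pieces $M_\alpha$ ($\alpha\in C$) of $V_C$ permute and glue the resulting $T_n$-constituents via the fusion rules \eqref{eq:3.1}. First I would take an irreducible $V_C$-module $M$ and decompose it over $T_n$; by rationality of $T_n$ this is a direct sum of modules $L(\shf,h_1)\tensor\cds\tensor L(\shf,h_n)$. Recording the positions with $h_i=\sfr1{16}$ yields a vector $\beta$, and the rules $L(\shf,\shf)\fusion L(\shf,\sfr1{16})=L(\shf,\sfr1{16})$ and $L(\shf,\shf)\fusion L(\shf,\shf)=L(\shf,0)$ show that acting by $M_\alpha$ fixes $\beta$ while translating the $\{0,\shf\}$-pattern outside $\supp(\beta)$ by $\alpha$. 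Irreducibility of $M$ then forces a single value of $\beta$ on all constituents, while compatibility of the $M_\alpha$-actions (single-valuedness of the associated sign characters $(-1)^{\langle\alpha,\beta\rangle}$) forces $\langle\alpha,\beta\rangle=0$ for all $\alpha\in C$, i.e.\ $\beta\in C^\perp$; outside $\supp(\beta)$ the pattern $\gamma$ is then determined modulo the restriction of $C$. This bookkeeping is the routine, abelian part of the argument.

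The essential content lies on $\supp(\beta)$, where every entry is $\sfr1{16}$ and the rule $L(\shf,\sfr1{16})\fusion L(\shf,\sfr1{16})=L(\shf,0)\oplus L(\shf,\shf)$ produces multiplicities. I would analyze the multiplicity space $\hom_{T_n}(L(h_{\beta,\gamma}),M)$: the $\tau$-preserving intertwining operators coming from $M_\alpha$, $\alpha\in C_\beta=\{\alpha\in C\mid \supp(\alpha)\subseteq\supp(\beta)\}$, act on it only projectively, with obstruction exactly the $\Z_2$-valued commutator $(-1)^{\langle\alpha,\alpha'\rangle}$. Thus the multiplicity space is a genuine module for a central extension $\hat C_\beta$ of $C_\beta$ by $\langle\pm1\rangle$ whose commutator is $(-1)^{\langle\alpha,\alpha'\rangle}$, and, $M$ being irreducible over $V_C$, it must be the unique irreducible $\hat C_\beta$-module with the prescribed nontrivial central character.

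To construct the modules and pin down this representation I would pass to a maximal self-orthogonal subcode $H\subseteq C_\beta$. Self-orthogonality makes $(-1)^{\langle\alpha,\alpha'\rangle}$ trivial on $H$, so $\hat H$ is abelian and its irreducible representations are one-dimensional characters $\tilde\chi_\gamma$; hence $L(h_{\beta,\gamma})\tensor\tilde\chi_\gamma$ is an irreducible $V_H$-module, and maximality of $H$ (it is maximal isotropic for the commutator form) guarantees that the irreducible $\hat C_\beta$-module restricts over $\hat H$ to a single such character and is recovered from it by induction. Translating back, the $V_H$-submodule of $M$ generated by the $\tilde\chi_\gamma$-part is $L(h_{\beta,\gamma})\tensor\tilde\chi_\gamma$, and inducing it from $V_H$ to $V_C$ reproduces $M$; this simultaneously gives $M\cong M_C(\beta,\gamma)$ and, since the induced module is determined by its $V_H$-datum (compare the uniqueness of simple current extensions, Proposition \ref{PSCE}), the asserted uniqueness of the $V_C$-module structure.

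The step I expect to be the main obstacle is precisely this $\sfr1{16}$-sector analysis: establishing rigorously that the multiplicity space carries an honest irreducible module over the extraspecial-type group $\hat C_\beta$ with the correct central character, and that maximal self-orthogonality of $H$ forces the restriction to $\hat H$ to be the single character $\tilde\chi_\gamma$. This is where the genuinely non-abelian, Clifford-algebra-like structure of the twisted sector enters, in contrast with the purely combinatorial determination of $\beta$ and of $\gamma$ outside $\supp(\beta)$; one also has to make the induced-module construction from $V_H$ to $V_C$ and its irreducibility precise, which is the technical heart of the argument.
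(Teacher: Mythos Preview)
The paper does not prove this statement; it is quoted without argument as \cite[Theorem 5.3]{M2}, so there is no in-paper proof to compare against. Your outline is essentially the strategy of Miyamoto's original proof in \cite{M2} (see also the expositions in \cite{DGL,LY}): restrict an irreducible $V_C$-module to $T_n$, show the $\sfr1{16}$-pattern $\beta$ is constant and lies in $C^\perp$, realize the multiplicity space on $\supp(\beta)$ as an irreducible module for the central extension $\hat C_\beta$ with commutator $(-1)^{\langle\cdot,\cdot\rangle}$, linearize over a maximal self-orthogonal $H\subset C_\beta$, and induce from $V_H$ to $V_C$. The point you correctly flag as the crux---that induction from $V_H$ produces an irreducible $V_C$-module uniquely determined by its $V_H$-seed---is precisely what \cite{M2} establishes, and you should cite that reference for the details rather than Proposition~\ref{PSCE}, which concerns simple current extensions of VOAs rather than induced modules.
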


%\begin{remark}\label{lem:4.5}  (cf. Lemma 4.5 of \cite{LY})
 % The module structure of $M_C(\beta,\gamma)$ is independent of the choice of the maximal self-orthogonal subcode $H$ of $C_\beta$ and the choice of the section from the
 % radical of $C_\beta$ to $H$.
%\end{remark}

Next let us review some basic properties of $M_C(\beta,\gamma)$.
%The details can be found in \cite{LY}.

\begin{lemma}\label{lem:4.6}  {\rm (\cite[Lemma 5.8]{DGL} and \cite[Lemma 3]{LY})}
  Let $\beta,\beta'\in C^\perp$ and $\gamma,\gamma'\in \Z_2^n$.
  Then the irreducible $V_C$-modules $M_C(\beta,\gamma)$ and
  $M_C(\beta',\gamma')$ are isomorphic if and only if
  $$
    \beta=\beta' \q \mbox{and} \q \gamma+\gamma' \in C+ H^{\perp_\beta},
  $$
  where $H^{\perp_\beta}=\{ \alpha \in \Z_2^n \mid \supp(\alpha)\subset
  \supp(\beta) \mbox{ and } \la \alpha,\delta\ra =0 \mbox{ for all }
  \delta \in H\}$.
\end{lemma}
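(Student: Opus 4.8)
The plan is to fix the $\sfr{1}{16}$-support first and then separate the dependence of $M_C(\be,\gamma)$ on $\gamma$ into two independent pieces: the coordinates outside $\supp(\be)$, which are controlled by fusion with the components $M_\al$, $\al\in C$, of $V_C$, and the coordinates inside $\supp(\be)$, which are controlled by the character $\tilde\chi_\gamma$ of the central extension of $H$. Throughout I would use Theorem \ref{thm:4.3}, which says that the $V_C$-module $M_C(\be,\gamma)$ is determined by, and determines, the inducing $V_H$-submodule $L(h_{\be,\gamma})\otimes\tilde\chi_\gamma$.

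For the necessity of $\be=\be'$, I would restrict to $T_n=L(\shf,0)^{\otimes n}$. Every irreducible $T_n$-constituent of $M_C(\be,\gamma)$ has the form $L(h_{\be,\gamma'})$, and the set of tensor factors carrying weight $\sfr{1}{16}$ is exactly $\supp(\be)$ for all of them: by the fusion rule $L(\shf,\shf)\fusion L(\shf,\sfr{1}{16})=L(\shf,\sfr{1}{16})$ in \eqref{eq:3.1}, the action of any $M_\al$ preserves the $\sfr{1}{16}$-type at each coordinate. Since this $\sfr{1}{16}$-support is manifestly an isomorphism invariant of the module (it is the value $\tau$), $M_C(\be,\gamma)\cong M_C(\be',\gamma')$ forces $\be=\be'$.

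Now fix $\be=\be'$. First I would establish the $+C$ part: for every $\al\in C$, the component $M_\al\subset V_C$ sends the inducing submodule $L(h_{\be,\gamma})\otimes\tilde\chi_\gamma$ to $L(h_{\be,\gamma+\al})\otimes\tilde\chi_{\gamma+\al}$, using $L(\shf,\shf)\fusion L(\shf,0)=L(\shf,\shf)$ and $L(\shf,\shf)\fusion L(\shf,\shf)=L(\shf,0)$ off $\supp(\be)$. Hence $\gamma$ and $\gamma+\al$ yield the same induced $V_C$-module, so $M_C(\be,\gamma)\cong M_C(\be,\gamma+\al)$. Next I would analyze the coordinates on $\supp(\be)$, where every entry of $h_{\be,\gamma}$ equals $\sfr{1}{16}$ and the $\gamma$-dependence is carried entirely by $\tilde\chi_\gamma$. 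Since $H$ is self-orthogonal, its central extension $\hat H$ is abelian, so $\tilde\chi_\gamma$ is a linear character, and I would verify that its value on a lift of $\delta\in H$ is $(-1)^{\la\gamma,\delta\ra}$ up to a $\gamma$-independent scalar. Consequently $\tilde\chi_\gamma=\tilde\chi_{\gamma'}$ if and only if $\la\gamma+\gamma',\delta\ra=0$ for all $\delta\in H$ with $\gamma+\gamma'$ supported on $\supp(\be)$, that is, $\gamma+\gamma'\in H^{\perp_\be}$. Combining the two pieces and using the uniqueness in Theorem \ref{thm:4.3}, two induced modules with the same $\be$ are isomorphic exactly when their inducing data are $C$-conjugate and their characters agree, i.e. $\gamma+\gamma'\in C+H^{\perp_\be}$.

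The main obstacle is the third step: pinning down the character $\tilde\chi_\gamma$ of $\hat H$ precisely enough to prove the pairing formula $\tilde\chi_\gamma(\hat\delta)=(-1)^{\la\gamma,\delta\ra}$, and then checking that the $+C$ freedom and the $+H^{\perp_\be}$ freedom assemble into exactly the subgroup $C+H^{\perp_\be}$ with no additional coincidences. This bookkeeping relies essentially on $H$ being a \emph{maximal} self-orthogonal subcode of $C_\be$ and on carefully comparing $H$, $C$ and $C^\perp$ restricted to $\supp(\be)$; everything outside this step is a routine application of the Ising fusion rules and of Theorem \ref{thm:4.3}.
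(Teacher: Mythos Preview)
The paper does not supply its own proof of this lemma; it is quoted verbatim from \cite[Lemma~5.8]{DGL} and \cite[Lemma~3]{LY}.  Your outline is essentially the argument given in those references: the $\tau$-invariant pins down $\beta$, and then the induced module $M_C(\beta,\gamma)$ is analyzed via its inducing $V_H$-submodule $L(h_{\beta,\gamma})\otimes\tilde\chi_\gamma$, separating the dependence on $\gamma$ into the part off $\supp(\beta)$ (which moves under the $C$-action by the $M_\alpha$) and the part on $\supp(\beta)$ (which is encoded in the linear character $\tilde\chi_\gamma$ of the abelian extension $\hat H$).  The identity $\tilde\chi_\gamma(\hat\delta)=\varepsilon(\hat\delta)\,(-1)^{\langle\gamma,\delta\rangle}$ with $\varepsilon$ independent of $\gamma$ is exactly what those references establish, and once one has it the rest is the bookkeeping you describe.

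The one point your sketch leaves slightly underspecified is the ``only if'' direction: you need that the $V_H$-constituents of $M_C(\beta,\gamma)$ are \emph{precisely} the translates $L(h_{\beta,\gamma+\alpha})\otimes\tilde\chi_{\gamma+\alpha}$ for $\alpha\in C$, so that no identifications beyond $C+H^{\perp_\beta}$ can occur.  This is where the maximality of $H$ inside $C_\beta$ enters (otherwise the induced module could fail to be irreducible, or distinct characters could induce isomorphic modules); in \cite{DGL,LY} it is handled by a short dimension/multiplicity count for the induced module as a $V_H$-module.  Your final paragraph already flags exactly this, so the plan is sound.
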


\begin{remark}\label{rem:4.6} {\rm (\cite[Remark 6]{LY})} If $C$ is even, $n\equiv0\pmod{16}$, and $C^\perp$ is triply even, then $H^{\perp_\beta}\subset C$ in Lemma \ref{lem:4.6}.
\end{remark}

\begin{lemma}\label{lem:4.12} {\rm (\cite[Lemma 7]{LY})}
  Let $\alpha,\beta,\gamma \in \Z_2^n$ with $\beta\in C^\perp$.
  Then
  $$
    M_C(0,\alpha)\fusion_{V_C}M_C(\beta,\gamma)
    \cong M_C(\beta,\alpha+\gamma).
  $$
  Moreover, the difference between the top weight of $M_C(\beta,\gamma)$
  and that of $M_C(\beta,\alpha+\gamma)$ is congruent to
  $\la \al, \al+\be\ra/2 $ modulo $\Z$.
\end{lemma}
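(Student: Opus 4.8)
The plan is to prove the fusion rule by combining Miyamoto's classification of irreducible $V_C$-modules (Theorem \ref{thm:4.3}) with the Ising fusion rules \eqref{eq:3.1}, and then to read off the weight shift directly from the conformal weights of the tensor factors. First I would verify that $M_C(0,\alpha)$ is a simple current, so that the left-hand side is irreducible. Restricting to the frame $T_n=L(\shf,0)^{\otimes n}$, the module $M_C(0,\alpha)$ decomposes as $\bigoplus_{c\in C}L(h_{0,\alpha+c})$, which has exactly the same shape as $V_C=\bigoplus_{c\in C}L(h_{0,c})$; since each tensor factor $L(\shf,\alpha_i/2)$ is one of the two simple currents $L(\shf,0),\,L(\shf,\shf)$ of the Ising model by \eqref{eq:3.1}, the module $M_C(0,\alpha)$ is a simple current of $V_C$ (this also follows from the representation theory in \cite{M2,LY}). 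Hence $M_C(0,\alpha)\fusion_{V_C}M_C(\beta,\gamma)$ is irreducible, and by Theorem \ref{thm:4.3} it is isomorphic to $M_C(\beta',\gamma')$ for some $\beta'\in C^\perp$ and $\gamma'\in\Z_2^n$.

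Next I would pin down $\beta'$ and $\gamma'$. The $\tau$-invariant recording the $\sfr{1}{16}$-positions is additive under fusion: by \eqref{eq:3.1}, a weight-$\sfr{1}{16}$ factor occurs in a coordinatewise fusion product precisely when exactly one of the two factors carries weight $\sfr{1}{16}$. As $M_C(0,\alpha)$ has $\tau=0$, this forces $\beta'=\beta$. To identify $\gamma'$, I would compute the lowest $T_n$-constituent of the fusion product by fusing the lowest constituents $L(h_{0,\alpha})$ and $L(h_{\beta,\gamma})$ coordinate by coordinate: at positions with $\beta_i=1$ the rules $L(\shf,\sfr{1}{16})\fusion L(\shf,0)=L(\shf,\sfr{1}{16})=L(\shf,\sfr{1}{16})\fusion L(\shf,\shf)$ preserve the weight $\sfr{1}{16}$, while at positions with $\beta_i=0$ one obtains $L(\shf,(\alpha_i+\gamma_i)/2)$. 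Thus the lowest constituent is $L(h_{\beta,\alpha+\gamma})$, which by the parametrization forces $\gamma'=\alpha+\gamma$ modulo the equivalence of Lemma \ref{lem:4.6}; hence $M_C(0,\alpha)\fusion_{V_C}M_C(\beta,\gamma)\cong M_C(\beta,\alpha+\gamma)$.

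For the weight shift I would use that all weights of an irreducible $V_C$-module are congruent modulo $\Z$ (the modes of $V_C$ shift weights by integers), so the top weight of $M_C(\beta,\delta)$ is congruent modulo $\Z$ to $\sum_i h^i_{\beta,\delta}=\wt(\beta)/16+\tfr12\wt(\delta\cd(\mathbf{1}+\beta))$, this being the weight of the constituent $L(h_{\beta,\delta})$. Subtracting the two values, the $\sfr{1}{16}$-contributions cancel and the difference is $\equiv\tfr12\wt(\alpha\cd(\mathbf{1}+\beta))\pmod{\Z}$; since $\wt(\alpha\cd(\mathbf{1}+\beta))\equiv\la\alpha,\alpha\ra+\la\alpha,\beta\ra=\la\alpha,\alpha+\beta\ra\pmod 2$, this yields the asserted value $\la\alpha,\alpha+\beta\ra/2$.

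The main obstacle is the second paragraph: rigorously justifying that the lowest $T_n$-constituent of the $V_C$-fusion product is governed by the coordinatewise Ising fusion of the lowest constituents. Concretely, this amounts to controlling the leading term of a $V_C$-intertwining operator through its restriction to $T_n$, or equivalently to constructing a nonzero intertwining operator of type $\binom{M_C(\beta,\alpha+\gamma)}{M_C(0,\alpha)\ M_C(\beta,\gamma)}$ by inducing up a tensor product of Ising intertwining operators. The simple-current input in the first paragraph and the weight computation in the third paragraph are then routine.
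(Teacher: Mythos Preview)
The paper does not give its own proof of this lemma; it is simply quoted from \cite[Lemma 7]{LY}, so there is no in-text argument to compare against. Your outline is the standard route taken in \cite{M2,LY}: identify $M_C(0,\alpha)$ as a simple current built from the Ising simple currents $L(\shf,0),L(\shf,\shf)$, read off $\beta'=\beta$ from additivity of the $\tau$-word under fusion, and pin down the remaining parameter by exhibiting the $T_n$-constituent $L(h_{\beta,\alpha+\gamma})$ inside the product. Your conformal-weight computation is correct; note that you do not need $L(h_{\beta,\gamma})$ to be literally the \emph{lowest} constituent of $M_C(\beta,\gamma)$, only that it occurs, since all weights of an irreducible $V_C$-module lie in a single coset of $\Z$.

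The point you flag as an obstacle is exactly the technical core of the argument in \cite{LY}. The cleanest way to close it is not to restrict a $V_C$-intertwiner down to $T_n$, but rather to go in the other direction: start from the nonzero $T_n$-intertwining operator of type $\binom{L(h_{\beta,\alpha+\gamma})}{L(h_{0,\alpha})\ L(h_{\beta,\gamma})}$ (a tensor product of Ising intertwiners), extend it to a $V_H$-intertwiner (this is where the maximal self-orthogonal subcode $H\subset C_\beta$ and the characters $\tilde\chi_\gamma$ enter), and then use the universal property of the induced module $M_C(\beta,\cdot)$ from \cite{M2} to promote it to a nonzero $V_C$-intertwining operator of the desired type. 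Since $M_C(0,\alpha)$ is a simple current, one nonzero intertwiner suffices to identify the fusion product. This is precisely how \cite{LY} proceeds, so your plan is on target; the missing ingredient is the induction step through $V_H$ rather than a direct restriction argument.
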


\begin{definition}
Let $C$ be an even code and $\alpha\in \Z_2^n$. Define the map $\sigma_\alpha: V_C \to V_C$ by
\[
\sigma_\alpha(u) = (-1)^{\la \alpha, \beta\ra} u \quad \text{ for } u \in M_\beta,\ \beta\in C.
\]
It is known \cite{M1} that $\sigma_\alpha$ is an automorphism of $V_C$.
\end{definition}

Next lemma plays an important role in Section 3.

\begin{lemma}\label{Lsigma} Let $C$ be an even code of length $n$ and let $\beta\in C^\perp$, $\alpha,\gamma\in\Z_2^n$.
Then
\[
\sigma_\al\circ M_C(\beta,\gamma)\cong M_C(0,\alpha\cdot\beta)\fusion_{V_C}M_C(\beta,\gamma),
\]
where $\alpha=(\alpha_i),\beta=(\beta_i)\in\F_2^n$, $\alpha\cdot\beta=(\alpha_i\beta_i)\in\F_2^n$.
\end{lemma}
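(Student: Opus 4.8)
The plan is to compute both sides in the codeword parametrization of irreducible $V_C$-modules and match them. First I would rewrite the right-hand side: applying Lemma \ref{lem:4.12} with the codeword $\al\cd\be$ in place of $\al$ gives $M_C(0,\al\cd\be)\boxtimes_{V_C}M_C(\be,\gamma)\cong M_C(\be,\al\cd\be+\gamma)$, so it suffices to prove $\sigma_\al\circ M_C(\be,\gamma)\cong M_C(\be,\al\cd\be+\gamma)$. The left-hand side is irreducible, since the conjugate of an irreducible module by an automorphism is again irreducible, so by Theorem \ref{thm:4.3} it is isomorphic to $M_C(\be',\gamma')$ for some $\be'\in C^\perp$ and $\gamma'\in\Z_2^n$; the task is to identify $\be'$ and $\gamma'$.

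Next I would pin down $\be'=\be$. The automorphism $\sigma_\al$ acts as the scalar $(-1)^{\la\al,0\ra}=1$ on $M_0=T_n$, hence fixes the Virasoro frame pointwise. Therefore conjugation by $\sigma_\al$ does not alter the underlying $T_n$-module structure of $M_C(\be,\gamma)$; in particular it preserves the $\sfr{1}{16}$-positions, i.e.\ the value $\tau=\be$, and it leaves the $L_0$-grading (hence the top weight) unchanged. This settles $\be'=\be$.

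The heart of the argument is the determination of $\gamma'$, for which I would use the explicit realization of $M_C(\be,\gamma)$ as an induced module from \cite[Section 4.2]{LY}. Since $\sigma_\al$ preserves each homogeneous component $M_\delta$, it preserves the subVOA $V_H=\bigoplus_{\delta\in H}M_\delta$, where $H$ is a maximal self-orthogonal subcode of $C_\be$, and conjugation commutes with the induction functor; hence $\sigma_\al\circ M_C(\be,\gamma)$ is induced from $\sigma_\al|_{V_H}\circ(L(h_{\be,\gamma})\otimes\tilde\chi_\gamma)$. Now comes the key support observation: every $\delta\in H\subseteq C_\be$ satisfies $\supp(\delta)\subseteq\supp(\be)$, so $\la\al,\delta\ra=\la\al\cd\be,\delta\ra$, whence $\sigma_\al$ and $\sigma_{\al\cd\be}$ act identically on $V_H$. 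Conjugating the $V_H$-module by this automorphism leaves the $T_n$-factor $L(h_{\be,\gamma})$ untouched (as $\al\cd\be$ is supported on $\be$ it does not change the off-support labels $\gamma_i/2$) and twists the defining character by the sign $\delta\mapsto(-1)^{\la\al\cd\be,\delta\ra}$, turning $\tilde\chi_\gamma$ into $\tilde\chi_{\gamma+\al\cd\be}$; inducing back yields $M_C(\be,\al\cd\be+\gamma)$, as desired.

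The step I expect to be the main obstacle is this last character computation: verifying from the construction in \cite{LY} that twisting the character of the central extension of $H$ by $\delta\mapsto(-1)^{\la\al\cd\be,\delta\ra}$ reproduces \emph{exactly} the character $\tilde\chi_{\gamma+\al\cd\be}$, and reconciling this with Lemma \ref{lem:4.6}, which identifies $M_C(\be,\gamma')$ only modulo $C+H^{\perp_\be}$. I would check that the sign character factors through $H$ in such a way that the ambiguity in $\gamma'$ is precisely $C+H^{\perp_\be}$, so that the congruence $\gamma'\equiv\gamma+\al\cd\be$ is well defined. As an organizational aid one may first reduce to $\gamma=0$ via the tensor-functoriality $\sigma_\al\circ(M_C(0,\gamma)\boxtimes_{V_C}M_C(\be,0))\cong M_C(0,\gamma)\boxtimes_{V_C}(\sigma_\al\circ M_C(\be,0))$, using that $\sigma_\al$ fixes every simple current $M_C(0,\gamma)$ (the $\be=0$ case, which is immediate from the second step since such modules are multiplicity-free over $T_n$).
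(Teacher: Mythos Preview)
Your proposal is correct and follows essentially the same route as the paper: both reduce to the $V_H$-submodule via Theorem \ref{thm:4.3}, use the support observation $\langle\alpha,\delta\rangle=\langle\alpha\cdot\beta,\delta\rangle$ for $\delta\in H\subset C_\beta$, compute the resulting character twist $\tilde\chi_\gamma\mapsto\tilde\chi_{\gamma+\alpha\cdot\beta}$, and finish with Lemma \ref{lem:4.12}. The only organizational difference is that the paper writes $\sigma_\alpha=\prod_{i\in\supp(\alpha)}\sigma_{e_i}$ and treats each coordinate separately (splitting into the cases $i\in\supp(\beta)$ and $i\notin\supp(\beta)$), whereas you handle the general $\sigma_\alpha$ in one stroke; your extra caution about the $C+H^{\perp_\beta}$ ambiguity and the reduction to $\gamma=0$ are unnecessary but harmless.
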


\begin{proof}
Let $e_i$ be the vector in $\F_2^n$ which is $1$ in the $i$-th coordinate and $0$ in the other coordinates.
Then $\sigma_\al =\prod_{i\in \supp(\al)} \sigma_{e_i}$.
By Lemma \ref{lem:4.12}, it suffices to show that
\[
\sigma_{e_i} \circ M_C(\beta,\gamma) \cong
\begin{cases}
M_C(0,e_i)\fusion_{V_C}M_C(\beta,\gamma) & \text{ if } i\in {\rm supp}(\be), \\
M_C(\beta,\gamma) & \text{ if } i\notin {\rm supp}(\be).
\end{cases}
\]

Let $H$ be a maximal self-orthogonal subcode of $C_\be$.
Then by Theorem \ref{thm:4.3}, the $V_C$-module structure is uniquely determined by a $V_H$-submodule structure.

If $i\notin {\rm supp}(\be)$, then $\sigma_{e_i}$ is trivial on $V_H$. Hence
$\sigma_{e_i} \circ\left( L(h_{\be, \gamma})\otimes \tilde{\chi}_{\gamma}\right) \cong
L(h_{\be, \gamma})\otimes \tilde{\chi}_{\gamma}$ as $V_H$-modules, and we have
$\sigma_{e_i} \circ M_C(\beta,\gamma)\cong M_C(\beta,\gamma)$ as $V_C$-modules.

Assume $i\in{\rm supp}(\be)$.
Then $h_{\beta,\gamma}=h_{\beta,\gamma+e_i}$.
Let $c=(c_i)\in H$. Then $\sigma_{e_i}$ acts on the submodule $\otimes_{i=1}^n L(\sfr{1}{2},c_i/2)$ of $V_H$ by the scaler $(-1)^{\la e_i, c\ra}$. Therefore,
\[
\sigma_{e_i} \circ\left( L(h_{\be, \gamma})\otimes \tilde{\chi}_{\gamma}\right) \cong
L(h_{\be, \gamma})\otimes \chi,
\]
where $\chi(c) =(-1)^{\la e_i, c\ra} \tilde{\chi}_\gamma(c)  = \tilde{\chi}_{\gamma+e_i}(c)$ for all $c\in H$, which proves $\sigma_{e_i} \circ M_C(\beta,\gamma)\cong M_C(\beta,e_i+\gamma)$.
The desired result follows from $M_C(\beta,e_i+\gamma)\cong M_C(0,e_i)\fusion_{V_C}M_C(\beta,\gamma)$ (Lemma \ref{lem:4.12}).
\end{proof}

\section{Uniqueness of framed VOAs associated to subcodes of $D^{ex}$} \label{sec:3}
In this section, we will  show that the isomorphism class of a framed VOA is uniquely determined by the $\sfr{1}{16}$-code $D$ if
$D$ is a subcode of the $9$-dimensional exceptional triply even code $D^{ex}$ of length $48$.

\subsection{Exceptional triply even code of length $48$}\label{sec:2.7}

First we recall the properties of the $9$-dimensional exceptional triply even code $D^{ex}$ of length $48$ given by \cite{BM}. (see also \cite{Lam}.)

Let $X=\{1,2, \dots, 10\}$ be a set of $10$ elements and let
\[
\Omega:= \binom{X}{2} =\big\{\{i,j\}\mid \{i,j\}\subset X \big\}
\]
be the set of all $2$-element subsets of $X$. Then $|\Omega|=\binom{10}{2} =45$. The triangular graph on $X$ is a graph whose
vertex set is $\Omega$ and two vertices $S, S'\in \Omega$ are joined by an edge if and only if $|S\cap S'|=1$. We will denote by
$\mathcal{T}_{10}$ the binary code generated by the row vectors of the incidence matrix of the triangular graph on $X$.
Note that $\dim\mathcal{T}_{10}=8$.

%\begin{remark}
%Note that the entries of an incidence matrix are either $0$ or $1$ and we shall
%view $0$ and $1$ as integers modulo $2$.
%\end{remark}

\begin{notation}
For $\{i,j\}\in \Omega$, let $\gamma_{\{i,j\}}$ be the binary word supported at
$\{\{k,\ell\}\mid |\{i,j\}\cap \{k,\ell\}|=1\}$, i.e., the set of all vertices
joining to $\{i,j\}$.  Note that
\begin{equation}\label{supp}
{\rm supp}(\gamma_{\{i,j\}})= \{\{i,k\}\mid k\in X\setminus \{i,j\}\}
\cup \{\{j,k\}\mid k\in X\setminus \{i,j\}\}
\end{equation}
and $\wt(\gamma_{\{i,j\}})=16$. For convenience, we often identify
$\gamma_{\{i,j\}}$ with its support.
\end{notation}

Now let $\iota: \Z_2^{45} \to \Z_2^{48}$ be the map defined by $\iota(\alpha)=(\alpha,
0,0,0)$. Then we can embed $\mathcal{T}_{10}$ into $\Z_2^{48}$ using $\iota$.

\begin{definition}\label{df:Dex}
Denote by $D^{ex}$ the binary code generated by $\iota(\mathcal{T}_{10}) $ and
the all-one vector $\mathbf{1}$ in $\Z_2^{48}$. Clearly, $\dim D^{ex}=9$.
\end{definition}

%\begin{notation}
%Let $D^{ex}$ be the triply even code defined in Definition \ref{df:Dex} and let
%$C^{ex}=(D^{ex})^\perp$ be the dual code. Then $\dim C^{ex}=39$. Again, we often identify a codeword with its support.
%\end{notation}

\begin{notation}
For $\alpha=(\alpha_1, \cdots,\alpha_n), \beta=(\be_1, \cdots,\be_n)\in \Z_2^n$, we will denote by $\al\cdot \be$ the
coordinatewise product of $\al$ and $\beta$, i.e., $\al\cdot \be= (\al_1\be_1, \dots, \al_n \be_n)$.
For a binary code $D$, we also denote the code ${\rm Span}_{\Z_2}\{\beta\cdot\beta'\mid \beta,\beta'\in D\}$ by $D\cdot D$.
%We will also use $p_{\beta}$
%to denote the natural projection of $\Z_2^{48}$ to the support of $\beta$, where $\beta\in \Z_2^{48}$.
\end{notation}

\begin{lemma} {\rm (\cite[Lemma 16]{BM})}\label{Cex}
 For any $2\leq i<j\leq 10$, denote $\be_{i,j} =\gamma_{\{1,i\}}\cdot \gamma_{\{1,j\}}$. Then the set $\{ \iota(\be_{i,j})\mid 2\leq i<j\leq 10\}\cup \{\mathbf{1}\} \cup \{ (0^{45},1,1,0),(0^{45},1,0,1)\}$ is a basis of  $(D^{ex})^\perp$.
\end{lemma}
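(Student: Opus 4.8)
The plan is to argue by dimension count. Since $\dim D^{ex}=9$ inside $\Z_2^{48}$, we have $\dim (D^{ex})^\perp = 48-9 = 39$, and the proposed set consists of exactly $\binom{9}{2}+1+2 = 36+3 = 39$ vectors. Hence it is enough to prove (a) every listed vector lies in $(D^{ex})^\perp$ and (b) the $39$ listed vectors are linearly independent.

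For (a), recall that $D^{ex}$ is spanned by $\mathbf{1}$ together with the vectors $\iota(\gamma_{\{k,l\}})$, $\{k,l\}\in\Omega$, so it suffices to test orthogonality against these generators. The vector $\mathbf{1}$ pairs to $48\equiv 0$ with itself and to $\wt(\gamma_{\{k,l\}})=16\equiv 0$ with each $\iota(\gamma_{\{k,l\}})$; the two vectors $(0^{45},1,1,0)$ and $(0^{45},1,0,1)$ have even weight and are supported on the last three coordinates, where every $\iota(\gamma_{\{k,l\}})$ vanishes, so they too lie in $(D^{ex})^\perp$. The essential point is the $\iota(\be_{i,j})$. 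Here I would first observe that $\mathcal{T}_{10}$ is triply even: $\iota$ preserves weights and $\iota(\mathcal{T}_{10})\subseteq D^{ex}$, which is triply even, so every codeword of $\mathcal{T}_{10}$ has weight divisible by $8$. Next I invoke the general fact that a triply even binary code $C$ satisfies $C\cdot C\subseteq C^\perp$, and moreover $\wt(u\cdot v)=|u\cap v|\equiv 0\pmod 4$ for $u,v\in C$ (identifying codewords with their supports). This follows from the identity $\wt(u+v+w)=\wt u+\wt v+\wt w-2(|u\cap v|+|u\cap w|+|v\cap w|)+4|u\cap v\cap w|$: triple-evenness forces each $|u\cap v|\equiv 0\pmod 4$, and reducing modulo $8$ leaves $4|u\cap v\cap w|\equiv 0$, i.e. $\langle u\cdot v,w\rangle=|u\cap v\cap w|$ is even. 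Applying this with $u=\gamma_{\{1,i\}}$, $v=\gamma_{\{1,j\}}$ shows $\be_{i,j}=u\cdot v\in\mathcal{T}_{10}^\perp$, whence $\langle \iota(\be_{i,j}),\iota(\gamma_{\{k,l\}})\rangle=\langle\be_{i,j},\gamma_{\{k,l\}}\rangle=0$, while $\wt(\be_{i,j})\equiv 0\pmod 4$ is even, so $\iota(\be_{i,j})\perp\mathbf{1}$. Thus $\iota(\be_{i,j})\in(D^{ex})^\perp$.

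For (b), suppose $\sum_{i<j}c_{ij}\iota(\be_{i,j})+a\mathbf{1}+b(0^{45},1,1,0)+d(0^{45},1,0,1)=0$. Restricting to the last three coordinates, where every $\iota(\be_{i,j})$ vanishes, gives $a+b+d=0$, $a+b=0$, $a+d=0$, which forces $a=b=d=0$. It remains to see the $36$ vectors $\be_{i,j}$ are independent in $\Z_2^{45}$. Using \eqref{supp} I compute $\supp(\be_{i,j})=\{\{1,c\}\mid c\in X\setminus\{1,i,j\}\}\cup\{\{i,j\}\}$; the unique vertex in this support that avoids the point $1$ is $\{i,j\}$. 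Hence restricting the family $\{\be_{i,j}\}$ to the $\binom{9}{2}=36$ coordinates indexed by the $2$-subsets of $\{2,\dots,10\}$ yields precisely the $36$ distinct standard basis vectors, an identity matrix, so all $c_{ij}=0$.

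The main obstacle is the orthogonality of the $\iota(\be_{i,j})$ to $\iota(\mathcal{T}_{10})$ in step (a); everything else is a dimension count and elementary bookkeeping. This orthogonality rests entirely on the structural fact that triple intersections of codewords of a triply even code have even cardinality (equivalently $C\cdot C\subseteq C^\perp$), so the one point to secure carefully is that $D^{ex}$, hence $\mathcal{T}_{10}$, is indeed triply even, as recorded in \cite{BM}. The explicit support $\supp(\be_{i,j})$ then does double duty, supplying both the even weight needed for $\mathbf{1}$-orthogonality and the identity pattern needed for independence.
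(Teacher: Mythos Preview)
Your argument is correct. The paper does not supply its own proof of this lemma; it is simply quoted from \cite[Lemma 16]{BM}, so there is no in-paper proof to compare against. Your dimension-count strategy---show each listed vector lies in $(D^{ex})^\perp$, then show the $39$ vectors are independent---is the natural direct verification, and each step checks out: the inclusion $C\cdot C\subseteq C^\perp$ for a triply even code $C$ is exactly the right structural input for part (a), and your explicit computation of $\supp(\be_{i,j})=\{\{1,c\}:c\notin\{1,i,j\}\}\cup\{\{i,j\}\}$ (weight $8$) cleanly handles both the $\mathbf{1}$-orthogonality and, via projection onto the $\binom{9}{2}$ coordinates indexed by $\binom{\{2,\dots,10\}}{2}$, the linear independence in part (b). The only external fact you invoke is that $D^{ex}$ (hence $\mathcal{T}_{10}$) is triply even, which the paper takes as given from \cite{BM}; one could alternatively bypass this by computing $\langle\be_{i,j},\gamma_{\{k,l\}}\rangle$ case-by-case from the explicit supports, but your route via the inclusion $C\cdot C\subseteq C^\perp$ is cleaner and more conceptual.
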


\begin{proposition}\label{LID}
Let $D$ be a $d$-dimensional subcode of $D^{ex}$ containing $\mathbf{1}$ and let $B=\{\mathbf{1}, \be_1, \cdots, \be_{d-1}\}$
be a basis of $D$. Then the set $\mathcal{B}=\{\mathbf{1}\} \cup \{\be\cdot \be'\mid \be, \be'\in B, \be\neq \be'\}$ is linearly
independent. In particular, $\dim (D\cdot D)=\binom{d}{2}+1$.
\end{proposition}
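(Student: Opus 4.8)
The plan is to exploit the commutative ring structure on $\Z_2^{48}$ given by the coordinatewise product $\cd$, which is symmetric and $\Z_2$-bilinear, has unit $\mathbf 1$, and in which $\be\cd\be=\be$ for every $\be$. Writing $\be_0=\mathbf 1$, bilinearity and idempotency give $D\cd D=\Span_{\Z_2}(\mathcal B)$, so the dimension statement follows once $\mathcal B$ is shown to be linearly independent (it then has exactly $\binom{d}{2}+1$ elements). For $1\le a\le d-1$ I write $\be_a=\iota(v_a)+\varepsilon_a\mathbf 1$ with $v_a\in\mathcal T_{10}\subset\Z_2^{45}$ and $\varepsilon_a\in\Z_2$. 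Expanding
\[
\be_a\cd\be_b=\iota(v_av_b)+\varepsilon_b\iota(v_a)+\varepsilon_a\iota(v_b)+\varepsilon_a\varepsilon_b\mathbf 1,\qquad \mathbf 1\cd\be_b=\be_b,
\]
exhibits a unitriangular (hence invertible) change of basis between $\mathcal B$ and $\mathcal B'=\{\mathbf 1\}\cup\{\iota(v_a)\mid a\}\cup\{\iota(v_av_b)\mid a<b\}$, so it suffices to prove $\mathcal B'$ is independent. Since $\mathbf 1$ is the only member of $\mathcal B'$ with nonzero entries in the last three coordinates, it splits off, and I am reduced to showing that $\{v_a\}_{a=1}^{d-1}\cup\{v_av_b\mid a<b\}$ is linearly independent in $\Z_2^{45}=\Z_2^{\Omega}$.

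Next I set up a convenient model for $\mathcal T_{10}$. For $i\in X$ let $P_i\in\Z_2^{\Omega}$ be the indicator of the pairs containing $i$. A direct check shows $\gamma_{\{i,j\}}=P_i+P_j$, so that $\mathcal T_{10}\subseteq\Span_{\Z_2}\{P_i\mid i\in X\}$, a space of dimension $9$ subject to the single relation $\sum_{i\in X}P_i=0$. In particular each $v_a$ may be written $v_a=\sum_{i\in S_a}P_i$ for some $S_a\subseteq X$, with indicator $s_a=(s_{a,i})_{i\in X}\in\Z_2^X$. The key computation is that coordinatewise products of the $P_i$ are essentially single coordinates: $P_i\cd P_j=e_{\{i,j\}}$ for $i\ne j$ and $P_i\cd P_i=P_i$, where $e_{\{i,j\}}$ is the $\{i,j\}$-th coordinate vector. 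Applying the coordinate functional $\pi_{\{i,j\}}$ and simplifying then yields
\[
\pi_{\{i,j\}}(v_a)=s_{a,i}+s_{a,j},\qquad \pi_{\{i,j\}}(v_av_b)=(s_{a,i}+s_{a,j})(s_{b,i}+s_{b,j}).
\]
Thus the $\{i,j\}$-coordinate of each vector in sight is the value of the associated linear, resp. quadratic, expression in $s_a,s_b$ evaluated at the pair $\{i,j\}$.

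Finally I read off the independence. To each point $i\in X$ I attach the column $p_i=(s_{a,i})_{a=1}^{d-1}\in\Z_2^{d-1}$. By the identities above, a relation $\sum_a c_av_a+\sum_{a<b}c_{ab}v_av_b=0$ holds if and only if the multilinear quadratic $Q(\lambda)=\sum_a c_a\lambda_a+\sum_{a<b}c_{ab}\lambda_a\lambda_b$ satisfies $Q(p_i+p_j)=0$ for every pair $\{i,j\}$. Because $\{\mathbf 1,\be_1,\dots,\be_{d-1}\}$ is a basis of $D$ and $\iota$ is injective, the vectors $\mathbf 1_X,s_1,\dots,s_{d-1}$ are linearly independent in $\Z_2^X$; hence the augmented columns $(1,p_i)$ span $\Z_2^{d}$, which means the ten points $p_1,\dots,p_{10}$ affinely span $\Z_2^{d-1}$. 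Choosing among them an affine basis $p_{i_0},\dots,p_{i_{d-1}}$ and setting $w_t=p_{i_t}+p_{i_0}$ (a linear basis), the sums $w_t=p_{i_t}+p_{i_0}$ and $w_t+w_{t'}=p_{i_t}+p_{i_{t'}}$ all lie in the test set $\{p_i+p_j\}$. Evaluating $Q$ there recovers its linear coefficients from the values $Q(w_t)$ and then its quadratic coefficients from the values $Q(w_t+w_{t'})$, forcing $Q\equiv0$; hence all $c_a$ and $c_{ab}$ vanish, which is the desired independence.

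The heart of the argument is the quadratic identity $\pi_{\{i,j\}}(v_av_b)=(s_{a,i}+s_{a,j})(s_{b,i}+s_{b,j})$, which converts the nonlinear coordinatewise products into pointwise values of a quadratic form and thereby linearizes the whole question; once it is in place, the remaining input is only the affine spanning of the ten columns, which is exactly the independence of the chosen basis of $D$ together with $\mathbf 1$. The points needing care are the routine verifications $\gamma_{\{i,j\}}=P_i+P_j$ and $P_i\cd P_j=e_{\{i,j\}}$ together with the unitriangular reduction from $\mathcal B$ to $\mathcal B'$.
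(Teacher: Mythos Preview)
Your proof is correct and takes a genuinely different route from the paper's. The paper extends the given basis of $D$ to a basis of $D^{ex}$, thereby reducing to the case $d=9$; it then invokes Lemma~\ref{Cex} (an explicit basis of $(D^{ex})^\perp$) to exhibit $37$ independent elements of $D^{ex}\cdot D^{ex}$, matching the upper bound $\binom{9}{2}+1$. Your argument is instead intrinsic and works for every subcode $D$ directly: you model $\mathcal{T}_{10}$ inside $\Span_{\Z_2}\{P_i\}$, use the key identity $P_i\cdot P_j=e_{\{i,j\}}$ to rewrite a putative dependence as the vanishing of a degree-$\le 2$ multilinear polynomial $Q$ on all differences $p_i+p_j$, and then kill $Q$ by choosing an affine basis among the $p_i$'s (which exists precisely because $\mathbf{1}_X,s_1,\dots,s_{d-1}$ are independent, a direct translation of the independence of $B$). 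The paper's argument is shorter because it recycles Lemma~\ref{Cex}; yours is self-contained and explains structurally why $\binom{d}{2}+1$ appears, at the cost of the auxiliary unitriangular reduction and the quadratic-form bookkeeping. Two small points worth making explicit in a final write-up: the representative $s_a$ of $v_a$ is only well-defined up to $\mathbf{1}_X$, but this ambiguity leaves $p_i+p_j$ and the affine span unchanged; and the recovery of coefficients from $Q(w_t)$ and $Q(w_t+w_{t'})$ is cleanest if you first rewrite $Q$ in the coordinates dual to the basis $w_1,\dots,w_{d-1}$, where it becomes literally $\sum_t c'_t\mu_t+\sum_{t<t'} c'_{tt'}\mu_t\mu_{t'}$.
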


\begin{proof} It suffices to consider the case where $D=D^{ex}$.
Note that $d=9$.
%Set $\mathcal{P}={\rm Span}_{\Z_2}\{ \be\cdot\be'|\be, \be'\in D\}$.
Since $|\mathcal{B}|\le \binom{9}{2}+1=37$ and $\langle\mathcal{B}\rangle_{\Z_2}=D\cdot D$, we have $\dim (D\cdot D)\leq 37$. Therefore, it suffices to show that $\dim(D\cdot D)\geq 37$.
%Since $D$ is triply even, $\mathcal{P}\subset D^\perp$.
%Hence $\dim \mathcal{P}\le 48-9=37$.

Let $\be_{i,j} =\gamma_{\{1,i\}}\cdot \gamma_{\{1,j\}}$ be defined as in Lemma \ref{Cex}. Then
 $\iota(\be_{i,j})\in D\cdot D$ for all $i,j$.
By Lemma \ref{Cex}, $\{\mathbf{1}\}\cup \{\iota(\be_{i,j})\mid 2\leq i <j\leq 10\}$ is a linearly independent subset of $D\cdot D$ with $37$ vectors.
%Since $B$ is a basis of $D$, $\mathcal{B}$ generates $\mathcal{P}$
Hence, $\dim(D\cdot D)\ge37$, and thus $\dim (D\cdot D)=37$ as desired.
 \end{proof}

 Next we recall a notation for denoting subcodes of $D^{ex}$ from \cite{Lam}.
%, which will be used to study the subcodes of $D^{ex}$.

\begin{notation}
Let $\lambda =(\lambda_1, \dots, \lambda_m)$ be a partition of $10$. Let $X_1,
\dots, X_m$ be subsets of $X$ such that $X=\bigcup_{i=1}^m X_i$ and
$|X_i\cap X_j|=\lambda_i\delta_{i,j}$ for $1\leq i,j\leq m$.
Let $D_{[\lambda_1, \dots,\lambda_m]}$ denote the code of length $48$ generated by the all-one vector
$\mathbf{1}$ and $\{ \gamma_{\{i,j\}} \mid \{i,j\}\subset X_k, 1\leq k\leq m\}$.
%Set $W_{\lambda} = \{ \gamma_{\{i,j\}} \mid \{i,j\}\subset X_k, 1\leq k\leq m\}$. Then $W_{\lambda}$ and the all-one vector
%$\mathbf{1}\in \Z_2^{48}$ generates a subcode of $D^{ex}$. We will denote this code by $D_{[\lambda_1, \dots,\lambda_m]}$.
For convenience, we often omit the $1$'s in the partition. For example, $D_{[8]}=D_{[8,1,1]}$  and $D_{[7]}=D_{[7,1,1,1]}$. Note
also that $D^{ex}= D_{[10]}$.
\end{notation}

\begin{remark}
It is clear by the definition that the code $D_{[\lambda_1, \dots, \lambda_m]}$ is
uniquely determined by the shape of the partition $(\lambda_1, \dots,\lambda_m)
$ up to the action of $\Sym_{10}$.
\end{remark}

\subsection{Framed VOA structures associated with a certain $\sfr{1}{16}$-code}
In this subsection, we show that the holomorphic framed VOA structure is uniquely determined by the  $\sfr{1}{16}$-code under
certain assumptions on the $\sfr{1}{16}$-code. As a corollary, we prove that the framed VOA structure is uniquely determined if
the $\sfr{1}{16}$-code is a subcode of the exceptional triply even code of length $48$.

\begin{definition}\label{S}
Let $D$ be a triply even code of length $n$ divisible by $16$.
Define
\[
\mathcal{Q}_D= \{ \delta:D \to \Z_2^n/ D^\perp\mid \delta \text{ is $\Z_2$-linear and }
\langle\delta(\be), \mathbf{1}+\be\rangle =0 \text{ for all } \be \in D\} .
\]
Note that $\mathcal{Q}_D$ is a linear subspace of ${\rm Hom}_{\Z_2}(D, \Z_2^n/D^\perp)$.
\end{definition}

\begin{lemma}\label{dS}
Let $D$ be a $d$-dimensional triply even code of length $n$ divisible by $16$. Assume that $D$ contains the all-one vector
$\mathbf{1}$.
\begin{enumerate}[{\rm (1)}]
\item Let $B=\{\mathbf{1}, \be_1, \dots, \be_{d-1}\}$ be a basis of $D$ and let $\delta\in{\rm Hom}_{\Z_2}(D,\Z_2^n/D^\perp)$.
Then $\delta\in\mathcal{Q}_D$ if and only if both {\rm (a)} $\langle\delta({\be}), \mathbf{1}+\be\rangle=0$ and {\rm (b)}
$\langle\delta({\be}), \be'\rangle=\langle\delta({\be'}), \be\rangle$ hold for all $\be, \be'\in B$.
\item $\dim \mathcal{Q}_D = 1+\binom{d}{2}$.
\end{enumerate}
\end{lemma}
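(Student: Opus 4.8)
The plan is to prove (1) by a direct bilinear computation and then deduce (2) by reinterpreting conditions (a) and (b) as linear constraints on a $d\times d$ matrix over $\Z_2$. Throughout I would write $\be_0=\mathbf{1}$, so that $B=\{\be_0,\be_1,\dots,\be_{d-1}\}$. First note that every inner product appearing in the statement is well defined: the second argument always lies in $D=(D^\perp)^\perp$ (namely $\mathbf{1}+\be\in D$ in (a), and $\be,\be'\in D$ in (b)), so pairing it against a coset in $\Z_2^n/D^\perp$ is independent of the choice of representative. For the forward direction of (1), if $\delta\in\mathcal{Q}_D$ then (a) is immediate since $B\subseteq D$; for (b) I would apply the defining relation to $\be+\be'\in D$ and expand by linearity of $\delta$ and bilinearity of $\langle\,,\rangle$,
$$0=\langle\delta(\be+\be'),\mathbf{1}+\be+\be'\rangle=\langle\delta(\be),\mathbf{1}+\be\rangle+\langle\delta(\be'),\mathbf{1}+\be'\rangle+\langle\delta(\be),\be'\rangle+\langle\delta(\be'),\be\rangle,$$
so that, the first two terms vanishing, $\langle\delta(\be),\be'\rangle=\langle\delta(\be'),\be\rangle$ over $\Z_2$.

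For the converse, I would expand an arbitrary $\be=\sum_{i\in S}\be_i\in D$ (with $S\subseteq\{0,\dots,d-1\}$) and compute
$$\langle\delta(\be),\mathbf{1}+\be\rangle=\sum_{i\in S}\langle\delta(\be_i),\be_0\rangle+\sum_{i,j\in S}\langle\delta(\be_i),\be_j\rangle.$$
The key observation is that, by the symmetry condition (b), the off-diagonal part of the double sum cancels in pairs modulo $2$, leaving only the diagonal terms. Hence the right-hand side collapses to $\sum_{i\in S}\langle\delta(\be_i),\be_0+\be_i\rangle=\sum_{i\in S}\langle\delta(\be_i),\mathbf{1}+\be_i\rangle$, each summand of which vanishes by (a). This yields $\delta\in\mathcal{Q}_D$ and completes (1).

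For (2), I would use that $(D^\perp)^\perp=D$ makes the inner product descend to a perfect pairing $D\times(\Z_2^n/D^\perp)\to\Z_2$; since $\dim(\Z_2^n/D^\perp)=d$, the assignment $\delta\mapsto M=(M_{ij})$ with $M_{ij}=\langle\delta(\be_i),\be_j\rangle$ is a linear isomorphism from $\hom_{\Z_2}(D,\Z_2^n/D^\perp)$ onto $\mathrm{Mat}_{d\times d}(\Z_2)$ (both spaces have dimension $d^2$, and the map is injective by perfectness of the pairing). Under this identification and part (1), $\delta\in\mathcal{Q}_D$ translates exactly into $M$ being symmetric (condition (b)) together with $M_{ii}=M_{i0}$ for every $i$ (condition (a), using $\mathbf{1}=\be_0$). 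The free data is therefore the single entry $M_{00}$ together with the strictly upper-triangular entries $M_{ij}$, $i<j$; every remaining entry is then forced, giving $\dim\mathcal{Q}_D=1+\binom{d}{2}$.

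The computations are elementary, and there is no deep obstacle. The only points that genuinely require care are the $\Z_2$-cancellation of the off-diagonal terms in the converse of (1), and, in (2), verifying that the descended pairing is truly perfect so that the passage to matrices is a bijection; once these are in place the dimension count is immediate.
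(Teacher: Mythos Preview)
Your proof is correct. Part (1) is essentially identical to the paper's argument: the forward direction and the cancellation of off-diagonal terms via (b) in the converse are exactly what the paper does.

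For part (2), your route differs slightly from the paper's. The paper counts sequentially: it fixes $\delta(\mathbf{1})$ freely in $\Z_2^n/D^\perp$ ($2^d$ choices), then for each successive $\delta(\be_i)$ imposes condition (a) and the symmetry conditions (b) against the previously chosen $\delta(\mathbf{1}),\delta(\be_1),\dots,\delta(\be_{i-1})$, obtaining $2^{d-1-i}$ choices, and sums the exponents. Your approach instead passes through the perfect pairing $D\times(\Z_2^n/D^\perp)\to\Z_2$ to identify $\hom_{\Z_2}(D,\Z_2^n/D^\perp)$ with $d\times d$ matrices, and then reads off (a) and (b) as the linear constraints $M_{ii}=M_{i0}$ and $M^T=M$. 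The two counts are of course the same, but your matrix formulation makes the structure of the constraints more transparent and avoids the slightly awkward sequential bookkeeping; the paper's version, on the other hand, requires no appeal to perfectness of the induced pairing.
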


\begin{proof} (1): Assume $\delta\in\mathcal{Q}_D$.
By the definition of $\mathcal{Q}_D$, (a) holds. Moreover, by the definition of $\mathcal{Q}_D$ and the $\Z_2$-linearity of
$\delta$, we have
$\langle\delta({\beta+\beta'}),\mathbf{1}+\beta+\beta'\rangle=\langle\delta(\beta),\beta'\rangle+\langle\delta(\beta'),\beta\rangle=0$
for all $\beta,\beta'\in B$. Hence (b) holds.

Conversely, we assume (a) and (b). Then $\delta\in\mathcal{Q}_D$ since for $\sum_{\beta\in B} c_\beta\beta\in D$,
$$\langle\delta(\sum_{\beta\in B} c_\beta\beta),\mathbf{1}+\sum_{\beta\in B}c_\beta\beta\rangle=\sum_{\beta,\beta'\in
B\atop{\beta\neq\beta'}}c_\beta c_{\beta'}\langle\delta(\beta),\beta'\rangle+\sum_{\beta\in
B}c_\beta\langle\delta(\beta),\mathbf{1}+\beta\rangle=0.$$

(2): By (1), in order to determine $\dim \mathcal{Q}_D$, it suffices to count the possibilities of the images of elements in $B$ satisfying (a) and (b).
Note that for $\beta=\mathbf{1}$, (a) is automatically satisfied. For $\beta\neq \mathbf{1}$,
the subspace of $\Z_2^n /D^\perp$ that satisfies (a) has dimension $d-1$.
Therefore, to obtain a subset  $\{\delta(\mathbf{1}), \delta({\be_1}), \dots, \delta( \be_{d-1})\}$ satisfying (a) and (b),
we have  $2^d$ choices for $\delta(\mathbf{1})$ and  $2^{(d-1)-1}$ choices for $\delta({\be_1})$ that satisfies (a) and   $\langle\delta({\be_1}), \mathbf{1}\rangle=\langle\delta({\mathbf{1}}), \be_1\rangle$. Similarly, we have  $2^{(d-1)-i}$  choices for $\delta_{\be_i}$  for $i=2, \dots, d-1$. Hence we have
\[
|\mathcal{Q}_D| = 2^d\cdot 2^{(d-2)}\cdot 2^{d-3} \cdots 2^1\cdot 2^0= 2^{d+(d-2)+\cdots+1}=2^{1+\binom{d}{2}}
\]
and $\dim \mathcal{Q}_D=1+\binom{d}2$ as desired.
\end{proof}

%\begin{notation}\label{eta}
%For each $\gamma\in \Z_2^{n}$, define $\tilde{\gamma}: D \to Z_2^{n}/ C$ by $\tilde{\gamma}(\be)=\gamma\cdot \be \mod C$. Then $\tilde{\gamma}\in \mathcal{S}$.
%Therefore, the map $\gamma \to \tilde{\gamma}$ induces a linear homomorphism from
%$\eta:\Z_2^{n} \to \mathcal{S}$ and the kernel of $\eta$ is given by
%\[
%Ker(\eta)=\mathcal{P}=\{\gamma \in \Z_2^{n}\mid \gamma\cdot \be \in C \text{ for all } \be\in D\}.
%\]
%\end{notation}

\begin{lemma}\label{Leta} For $\gamma\in\Z_2^n$, the map $\eta(\gamma)$ :$D\to \Z_2^n/D^\perp$, $\beta\mapsto \gamma\cdot\beta+D^\perp$ belongs to $\mathcal{Q}_D$.
\end{lemma}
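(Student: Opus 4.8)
The statement is a direct verification that $\eta(\gamma)$ satisfies the two defining conditions of $\mathcal{Q}_D$ in Definition \ref{S}: that it is $\Z_2$-linear, and that $\langle \eta(\gamma)(\be), \mathbf{1}+\be\rangle = 0$ for every $\be\in D$. The map $\eta(\gamma)\colon \be\mapsto \gamma\cdot\be + D^\perp$ is well-defined as a map $D\to \Z_2^n/D^\perp$ with no further hypotheses, since it is simply coordinatewise multiplication by $\gamma$ followed by the canonical projection. I would also record at the outset that the pairing $\langle \eta(\gamma)(\be), \mathbf{1}+\be\rangle$ makes sense: because $\mathbf{1}\in D$ and $\be\in D$, we have $\mathbf{1}+\be\in D$, so $\mathbf{1}+\be$ is orthogonal to $D^\perp$ and the pairing of the coset $\gamma\cdot\be+D^\perp$ with $\mathbf{1}+\be$ is independent of the chosen representative.

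For linearity, since the coordinatewise product distributes over addition in $\Z_2^n$, for $\be,\be'\in D$ one has $\gamma\cdot(\be+\be')=\gamma\cdot\be+\gamma\cdot\be'$, and projecting to $\Z_2^n/D^\perp$ gives $\eta(\gamma)(\be+\be')=\eta(\gamma)(\be)+\eta(\gamma)(\be')$. Hence $\eta(\gamma)\in\hom_{\Z_2}(D,\Z_2^n/D^\perp)$.

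The remaining point is the orthogonality condition, which is where the whole content lies, although it is a one-line computation. The key facts are the idempotency $\be\cdot\be=\be$ of the coordinatewise product on $\Z_2^n$ together with the self-adjointness identity $\langle a\cdot b, c\rangle=\langle a, b\cdot c\rangle$. Using these, I would compute
\[
\langle \gamma\cdot\be,\ \mathbf{1}+\be\rangle = \langle \gamma\cdot\be,\ \mathbf{1}\rangle + \langle \gamma\cdot\be,\ \be\rangle = \langle \gamma,\be\rangle + \langle \gamma,\ \be\cdot\be\rangle = \langle \gamma,\be\rangle + \langle \gamma,\be\rangle = 0
\]
in $\Z_2$, since the two terms coincide. (Concretely, coordinatewise, each contribution is $\gamma_i\be_i(1+\be_i)=0$ because $\be_i(1+\be_i)=0$ in $\Z_2$.) This establishes $\langle \eta(\gamma)(\be),\mathbf{1}+\be\rangle=0$ for all $\be\in D$, so $\eta(\gamma)\in\mathcal{Q}_D$.

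There is no real obstacle here; the statement is a routine check, and the only item deserving a word of care is the well-definedness of the pairing on cosets, which rests exactly on the hypothesis $\mathbf{1}\in D$ built into Definition \ref{S}. The lemma is essentially a preparatory observation producing a distinguished family of elements of $\mathcal{Q}_D$ (the image of the linear map $\gamma\mapsto\eta(\gamma)$), which will presumably be compared against the dimension count $\dim\mathcal{Q}_D=1+\binom{d}{2}$ from Lemma \ref{dS}(2) in the subsequent argument.
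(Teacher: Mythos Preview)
Your proof is correct and follows the same approach as the paper's, which is a two-line check: linearity of $\eta(\gamma)$ from bilinearity of the coordinatewise product, and the orthogonality condition from $\langle\gamma\cdot\beta,\mathbf{1}+\beta\rangle=0$ (which you expand via $\beta\cdot\beta=\beta$). One small inaccuracy: the hypothesis $\mathbf{1}\in D$ is not actually stated in Definition~\ref{S} itself, though it is assumed in the surrounding lemmas where $\mathcal{Q}_D$ is used; your observation about well-definedness of the pairing on cosets is a valid point the paper leaves implicit.
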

\begin{proof} Since the coordinatewise product $\cdot$ is $\Z_2$-linear, so is $\eta(\gamma)$.
For $\beta\in D$,
$\langle\eta(\gamma)(\beta),\mathbf{1}+\beta\rangle=\langle\gamma\cdot\beta,\mathbf{1}+\beta\rangle=0$. Hence
$\eta(\gamma)\in\mathcal{Q}_D$.
\end{proof}

\begin{lemma}\label{ga}
Let $D$ be a $d$-dimensional triply even code of length $n$ divisible by $16$. Assume that $D$ contains $\mathbf{1}$ and that
$\dim (D\cdot D)=\binom{d}{2}+1$.
%Assume that there exists a basis $B=\{\mathbf{1}, \be_1, \dots, \be_{d-1}\}$ of $D$ such that $ \{\mathbf{1}\} \cup \{ \be\cdot \be'\mid \be, \be'\in B, \be \neq \be'\}$ is linearly independent.
%\begin{enumerate}[{\rm (1)}]
%\item
Then, for $\delta\in \mathcal{Q}_D$, there exists $\gamma\in \Z_2^{n}$ such that $\delta(\be)= \gamma \cdot \be\mod D^\perp$ for any $\be\in D$.
%\item For $\gamma\in D$ with $\gamma\neq\mathbf{1}$, $\{\alpha\cdot \gamma\mid \alpha\in D^\perp\}=\{\alpha\in \Z_2^n\mid {\rm wt}(\alpha)\in2\Z,\ {\rm Supp}(\alpha)\subset{\rm Supp}(\gamma)\}$.
%\end{enumerate}
\end{lemma}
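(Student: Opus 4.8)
The claim is essentially a surjectivity/dimension-counting statement: the linear map $\eta : \Z_2^n \to \mathcal{Q}_D$, $\gamma \mapsto \eta(\gamma)$ defined in Lemma \ref{Leta}, hits every element of $\mathcal{Q}_D$. The plan is to prove that $\eta$ is surjective by comparing dimensions. By Lemma \ref{dS}(2) we already know $\dim \mathcal{Q}_D = 1 + \binom{d}{2}$, so it suffices to show that the image $\eta(\Z_2^n)$ has dimension at least $1 + \binom{d}{2}$, and then invoke Lemma \ref{Leta} to conclude $\eta(\Z_2^n) = \mathcal{Q}_D$.

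First I would analyze the kernel of $\eta$. An element $\gamma$ lies in $\Ker \eta$ precisely when $\gamma \cdot \beta \in D^\perp$ for every $\beta \in D$. The key observation is that the pairing $\langle \gamma \cdot \beta, \beta' \rangle = \langle \gamma, \beta \cdot \beta' \rangle$ (since coordinatewise multiplication is self-adjoint for the standard inner product), so the condition $\gamma \cdot \beta \in D^\perp$ for all $\beta \in D$ is equivalent to $\langle \gamma, \beta \cdot \beta' \rangle = 0$ for all $\beta, \beta' \in D$, i.e. $\gamma \in (D \cdot D)^\perp$. Hence $\Ker \eta = (D \cdot D)^\perp$ and consequently
\[
\dim \eta(\Z_2^n) = n - \dim (D\cdot D)^\perp = \dim (D \cdot D).
\]
Here the hypothesis $\dim(D \cdot D) = \binom{d}{2} + 1$ enters decisively: it gives $\dim \eta(\Z_2^n) = \binom{d}{2} + 1 = \dim \mathcal{Q}_D$.

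Since $\eta(\Z_2^n) \subseteq \mathcal{Q}_D$ by Lemma \ref{Leta} and the two spaces now have equal finite dimension, they coincide; thus every $\delta \in \mathcal{Q}_D$ is of the form $\eta(\gamma)$ for some $\gamma \in \Z_2^n$, which is exactly the assertion. The only point requiring a little care — and the place I would verify most carefully — is the self-adjointness identity $\langle \gamma\cdot\beta, \beta'\rangle = \langle \gamma, \beta\cdot\beta'\rangle$ and the resulting identification $\Ker\eta = (D\cdot D)^\perp$; once that is in hand the argument is a clean dimension count. I do not expect a serious obstacle, since all the structural input (the formula for $\dim\mathcal{Q}_D$ and the hypothesis on $\dim(D\cdot D)$) has been arranged precisely so that these two dimensions agree.
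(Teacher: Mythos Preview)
Your proposal is correct and follows essentially the same approach as the paper: both arguments show surjectivity of $\eta$ by identifying $\Ker\eta = (D\cdot D)^\perp$ via the self-adjointness identity $\langle \gamma\cdot\beta,\beta'\rangle = \langle \gamma,\beta\cdot\beta'\rangle$, and then compare $\dim\Imm(\eta)=\dim(D\cdot D)=\binom{d}{2}+1$ with $\dim\mathcal{Q}_D$ from Lemma~\ref{dS}.
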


\begin{proof} %(1):
Set $\Imm (\eta)=\{\eta(\gamma)\mid\gamma\in \Z_2^n\}$ and $\Ker(\eta)=\{\gamma\in\Z_2^n\mid \gamma\cdot\beta\in C \text{ for all } \beta\in D\}$.
By Lemma \ref{Leta}, it suffices to prove that $\dim \mathcal{Q}_D = \dim \Imm (\eta)$.
%, where $\eta$ is defined as in
%Notation \ref{eta}.  Recall that
%\[
%\Ker(\eta)=\mathcal{P}=\{\gamma \in \Z_2^{48}\mid \gamma\cdot \be \in C \text{ for all } \be\in D\}.
%\]
%Thus,
Since
\[
\begin{split}
\gamma\in \Ker(\eta)  & \Leftrightarrow  \langle\gamma\cdot \be, \be'\rangle=0\quad  \text{ for all } \be, \be'\in D\\
& \Leftrightarrow  \langle\gamma,  \be\cdot \be'\rangle=0\quad  \text{ for all } \be, \be'\in D,
\end{split}
\]
%we have ${\rm Span}_{\Z_2}\{ \be\cdot\be'|\be, \be'\in D\}=\Ker(\eta)^\perp$.
we have $D\cdot D=\Ker(\eta)^\perp$.
By the assumption, we have
%By the assumption on $B$, $\dim {\rm Span}_{\Z_2}\{ \be\cdot\be'\mid\be, \be'\in D\}=1+\binom{d}{2}$.
%Thus,
\[
\dim \Imm(\eta) = n -\dim \Ker(\eta) = \dim \Ker(\eta)^\perp=1+\binom{d}{2}.
\]
%By the assumption on $B$, $\dim P'=1+\binom{d}{2}$.
Therefore by Lemma \ref{dS}, $\dim \Imm(\eta)=\dim\mathcal{Q}_D$.
%(2): Set $E=\{\gamma\cdot\alpha\mid \alpha\in D^\perp\}$.
%Clearly, $E\subset \{\alpha\in \Z_2^n\mid {\rm wt}(\alpha)\in2\Z,\ {\rm Supp}(\alpha)\subset{\rm Supp}(\gamma)\}$.
%Hence, it suffices to show that $\dim E=\wt(\gamma)-1$.
%Let $B'$ be a basis of $D$ such that $\1,\1+\gamma\in B'$.
%Then $\{\1\}\cup\{\beta\cdot\beta'\mid \beta,\beta'\in B'\}$ is a basis of ${\rm Span}_{\Z_2}\{\alpha\cdot\alpha'\mid\alpha,\alpha'\in B\}$.
%By the assumption of $B$, its dimensin is $1+\binom{d}{2}$.
%Hence $\{\beta\cdot\beta'\mid \beta,\beta'\in B'\}$ is linear independent.
%Set $F=\{\alpha\in D^\perp\mid {\rm Supp}(\alpha)\subset{\rm Supp}(\1+\gamma)\}$.
%Then $\dim F+\dim E=\dim D^\perp$.
%Moreover, $F=\{\alpha\in\Z_2^n\mid {\rm Supp}(\alpha)\subset{\rm Supp}(\1+\gamma), (\alpha,\beta)=0\ \forall \beta\in B'\setminus\{\1+\gamma\}\}$
%Since $\{(\1+\gamma)\cdot\beta\mid \beta\in B',\ \beta\neq\1+\gamma\}$ is linear independent, we have $\dim F=\wt (\1+\gamma)-(d-1)=n-\wt(\gamma)-d+1$.
%Hence $\dim E=(n-d)-\dim F=\wt(\gamma)-1$.
%Let $B=\{\mathbf{1}, \be_1, \dots, \be_{d-1}\}$ be a basis of $D$. Then by Lemma \ref{LID},
%the set $$ \{\mathbf{1}\} \cup \{ \be\cdot \be'\mid \be, \be'\in B, \be \neq \be'\}$$  is linearly independent. Moreover, it spans $\mathcal{P}'$.  Therefore, $\dim \mathcal{P}'= 1+\binom{d}{2} = \dim \mathcal{S}$ and we have the desired conclusion.
\end{proof}

\begin{lemma}\label{delta} Let $V=\bigoplus_{\be\in D}V^\be$ and $U=\bigoplus_{\be\in D}U^\be$ be holomorphic framed VOAs with the same $\sfr{1}{16}$-code $D$.
Let $C=D^\perp$.
Then
there exists a unique $\delta\in \mathcal{Q}_D$ such that, as $V_C$-modules,
\[
U^\be\cong M_C(0,\delta(\beta))\fusion_{V_C} V^\be \quad \text{ for all }\be\in D.
\]
\end{lemma}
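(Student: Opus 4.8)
The plan is to parametrize the graded pieces of both VOAs by their Lam--Yamauchi labels and read off $\delta$ as the difference of those labels. First I would invoke Theorem~\ref{thm:4.3}: since each $V^\be$ and $U^\be$ is an irreducible $V_C$-module whose $\tau$-value is $\be$, there are $\gamma_V(\be),\gamma_U(\be)\in\Z_2^n$ with $V^\be\cong M_C(\be,\gamma_V(\be))$ and $U^\be\cong M_C(\be,\gamma_U(\be))$. Since $C=D^\perp$ is even and $C^\perp=D$ is triply even (Lemma~\ref{Lwt}) with $n$ divisible by $16$, Remark~\ref{rem:4.6} gives $H^{\perp_\be}\subset C$, so by Lemma~\ref{lem:4.6} these labels are well defined modulo $D^\perp$. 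I would then set
\[
\delta(\be):=\gamma_U(\be)+\gamma_V(\be)+D^\perp\in\Z_2^n/D^\perp ,
\]
and Lemma~\ref{lem:4.12} immediately yields $M_C(0,\delta(\be))\fusion_{V_C}V^\be\cong M_C(\be,\gamma_U(\be))\cong U^\be$, the required isomorphism. It then remains to verify $\delta\in\mathcal{Q}_D$ and its uniqueness.

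For the $\Z_2$-linearity of $\delta$ I would exploit that both $V$ and $U$ are $D$-graded simple current extensions of $V_C$ (Theorem~\ref{PFrameSCE}), so $V^\be\fusion_{V_C}V^{\be'}\cong V^{\be+\be'}$, and likewise for $U$. The key point is that the additivity defect of $\gamma_V$ is intrinsic to $V_C$. Indeed, Lemma~\ref{lem:4.12} lets me write $M_C(\be,\gamma)\cong M_C(0,\gamma)\fusion_{V_C}M_C(\be,0)$ with $M_C(\be,0)\cong M_C(0,\gamma_V(\be))\fusion_{V_C}V^\be$; since $M_C(0,\gamma)$ is a simple current (Lemma~\ref{lem:4.12}) and $V^\be$ is a simple current, every irreducible $V_C$-module is a simple current, so $M_C(\be,0)\fusion_{V_C}M_C(\be',0)$ is again irreducible, of $\tau$-value $\be+\be'$, hence equals $M_C(\be+\be',g(\be,\be'))$ for some $g(\be,\be')$ depending only on $C$. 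Computing $V^\be\fusion_{V_C}V^{\be'}$ in two ways gives $\gamma_V(\be+\be')\equiv\gamma_V(\be)+\gamma_V(\be')+g(\be,\be')\pmod{D^\perp}$, and the identical relation holds for $\gamma_U$ with the \emph{same} $g$; subtracting the two makes $\delta$ additive. This is the step I expect to be the main obstacle, since it requires certifying that the label-shift produced by fusion is independent of the ambient holomorphic VOA.

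Finally, for the pairing condition I would use integrality of weights. Because $V^\be$ and $U^\be$ are graded subspaces of the $\Z_{\ge0}$-graded VOAs $V$ and $U$, their top weights are integers, so by the weight formula in Lemma~\ref{lem:4.12} the difference $\la\delta(\be),\delta(\be)+\be\ra/2$ is an integer, i.e. $\la\delta(\be),\delta(\be)+\be\ra\equiv0\pmod2$. Applying the identity $\la x,x\ra\equiv\la x,\mathbf{1}\ra\pmod2$ for binary $x$, this rearranges to $\la\delta(\be),\mathbf{1}+\be\ra\equiv0$, which is exactly the defining condition of $\mathcal{Q}_D$ and is well defined modulo $D^\perp$ since $\mathbf{1}+\be\in D$. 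Uniqueness is then immediate: if $\delta$ and $\delta'$ both satisfy the conclusion, then $M_C(\be,\gamma_V(\be)+\delta(\be))\cong M_C(\be,\gamma_V(\be)+\delta'(\be))$, so Lemma~\ref{lem:4.6} together with Remark~\ref{rem:4.6} forces $\delta(\be)+\delta'(\be)\in D^\perp$ for every $\be$, i.e. $\delta=\delta'$.
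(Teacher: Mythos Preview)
Your proof is correct and follows the same overall plan as the paper: parametrize $V^\be$ and $U^\be$ by their labels $\gamma_V(\be),\gamma_U(\be)\in\Z_2^n/C$ (unique by Lemma~\ref{lem:4.6} and Remark~\ref{rem:4.6}), set $\delta(\be)=\gamma_U(\be)+\gamma_V(\be)$, and verify the conditions of $\mathcal{Q}_D$ via the simple current extension structure and integrality of weights.

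The only place you diverge is the linearity argument, and there your route is more circuitous than necessary. You introduce an auxiliary label $g(\be,\be')$ with $M_C(\be,0)\fusion_{V_C}M_C(\be',0)\cong M_C(\be+\be',g(\be,\be'))$ and then cancel it between $U$ and $V$. This is valid, but the paper bypasses it entirely: once you already know $U^\be\cong M_C(0,\delta(\be))\fusion_{V_C}V^\be$, compute
\[
M_C(0,\delta(\be+\be'))\fusion V^{\be+\be'}\cong U^{\be+\be'}\cong U^\be\fusion U^{\be'}\cong M_C(0,\delta(\be)+\delta(\be'))\fusion V^\be\fusion V^{\be'}\cong M_C(0,\delta(\be)+\delta(\be'))\fusion V^{\be+\be'},
\]
using only Lemma~\ref{lem:4.12} and associativity/commutativity of fusion; cancel the simple current $V^{\be+\be'}$ and Lemma~\ref{lem:4.6} gives $\delta(\be+\be')=\delta(\be)+\delta(\be')$ in $\Z_2^n/C$. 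So the step you flagged as the ``main obstacle'' dissolves without ever having to identify $M_C(\be,0)\fusion_{V_C}M_C(\be',0)$.
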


\begin{proof} Recall that $V^0\cong U^0\cong V_C$.
Let $\beta\in D$.
Then by Theorems \ref{PFrameSCE} and \ref{thm:4.3}, Lemma \ref{lem:4.6} and Remark \ref{rem:4.6}, there exist unique  $\gamma_{\beta,V},\gamma_{\beta,U}\in\Z_2^n/C$ such that $U^\beta\cong M_C(\beta,\gamma_{\beta,U})$ and $V^\beta\cong M_C(\beta,\gamma_{\beta,V})$ as $V_C$-modules.
Let $\delta$ be the map from $D$ to $\Z_2^n/C$ defined by $\delta(\beta)=\gamma_{\beta,U}+\gamma_{\beta,V}$.
%By Lemma \ref{lem:4.6}, $M_C(\beta,\gamma)\cong M_C(\beta,c+\gamma)$ as $V_C$-modules for any $c\in C$.
%Hence we may regard $\delta$ as the map from $D$ to $\Z_2^n/C$.
Then by Lemma \ref{lem:4.12} $$M_C(0,\delta(\beta))\fusion_{V_C}V^\beta\cong U^\beta.$$
%Since $\tau(U^\be)=\tau(V^\be) =\be$ for each $\be \in D$,  there is $\delta_\be\in \Z_2^n$ such that
%\[
%U^\be \cong M_{\delta_\be+C}\fusion V^\be.
%\]

Let us show that $\delta\in\mathcal{Q}_D$.
Since both $U$ and $V$ are simple current extensions (Theorem \ref{PFrameSCE}), we have $U^\be\fusion_{V_C} U^{\be'}\cong U^{\be+\be'}$ and $V^\be\fusion_{V_C} V^{\be'}\cong V^{\be+\be'}$ for all $\beta,\beta'\in D$.
Hence
\[
\delta({\be})+\delta({\be'})= \delta({\be+\be'}) \text{  for all } \be, \be'\in D,
\]
that is, the map $\delta: D \to \Z_2^n/C$ is $\Z_2$-linear. Moreover, $U^\be $ and $V^\be$ have integral weights for all $\be\in
D$. By Lemma \ref{lem:4.12}, the difference of their top weights is
$\langle\delta(\beta),\delta(\beta)+\beta\rangle/2=\langle\delta(\beta),\mathbf{1}+\beta\rangle/2$ mod $\Z$. Hence $\langle
\delta({\be}), \mathbf{1} + \be\rangle=0$ for all $\be\in D$. Thus $\delta \in \mathcal{Q}_D$. \end{proof}

%\begin{remark} In general, $\delta$ is not unique (Lemma \ref{lem:4.6}).
%\end{remark}

\begin{theorem}\label{TUni} Let $D$ be a $d$-dimensional triply even code of length $n$ divisible by $16$.
% containing $\1$.
%Assume that there exists a basis $B=\{\mathbf{1}, \be_1, \dots, \be_{d-1}\}$ of $D$ such that $ \{\mathbf{1}\} \cup \{ \be\cdot \be'\mid \be, \be'\in B, \be \neq \be'\}$ is linearly independent.
%Assume that $\dim (D\cdot D)=\binom{d}{2}+1$.
%Let $D$ be a subcode of $D_{[10]}$ and $C=D^\perp$.
Assume that $D$ contains $\mathbf{1}$ and that $\dim (D\cdot D)=\binom{d}{2}+1$. Let $U$ and $V$ be holomorphic framed
VOAs with the same $\sfr{1}{16}$-code $D$. Then $U\cong V$ as VOAs.
\end{theorem}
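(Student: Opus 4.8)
The plan is to reduce everything to the uniqueness of simple current extensions (Proposition \ref{PSCE}), using Lemmas \ref{delta}, \ref{ga} and \ref{Lsigma} to encode the discrepancy between $U$ and $V$ as a single automorphism of their common subVOA $V_C$. Since $U$ and $V$ are holomorphic, we have $C=D^\perp$ and $U^0\cong V^0\cong V_C$, and by Theorem \ref{PFrameSCE} both $U=\bigoplus_{\be\in D}U^\be$ and $V=\bigoplus_{\be\in D}V^\be$ are $D$-graded simple current extensions of $V_C$. First I would invoke Lemma \ref{delta} to produce the unique $\delta\in\mathcal{Q}_D$ with $U^\be\cong M_C(0,\delta(\be))\fusion_{V_C}V^\be$ for all $\be\in D$.

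The hypotheses $\mathbf{1}\in D$ and $\dim(D\cdot D)=\binom{d}{2}+1$ are precisely those of Lemma \ref{ga}, so this $\delta$ has the special form $\delta(\be)=\gamma\cdot\be \bmod D^\perp$ for some fixed $\gamma\in\Z_2^n$; hence $U^\be\cong M_C(0,\gamma\cdot\be)\fusion_{V_C}V^\be$ for all $\be$. Writing $V^\be\cong M_C(\be,\mu_\be)$ for a suitable codeword $\mu_\be$ and applying Lemma \ref{Lsigma} with $\al=\gamma$ (note $\be\in D=C^\perp$), I obtain
$$\sigma_\gamma\circ V^\be\cong M_C(0,\gamma\cdot\be)\fusion_{V_C}V^\be\cong U^\be$$
as $V_C$-modules, for every $\be\in D$. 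In other words, the fusion-product twist relating $U$ and $V$ is realized uniformly by the single automorphism $\sigma_\gamma\in\Aut V_C$.

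The hard part is to upgrade these module isomorphisms to a VOA isomorphism, since Proposition \ref{PSCE} compares two extensions of $V_C$ whose graded pieces agree \emph{as $V_C$-modules}, whereas here $U^\be$ matches $\sigma_\gamma\circ V^\be$ rather than $V^\be$ itself. To bridge this, I would form $\sigma_\gamma\cdot V:=\bigoplus_{\be\in D}(\sigma_\gamma\circ V^\be)$ and argue that, because $\sigma_\gamma$ is an automorphism of the \emph{base} $V_C$, it transports the simple current extension structure of $V$ to one on $\sigma_\gamma\cdot V$; concretely, the linear map acting as $\sigma_\gamma$ on $V_C$ and as the identity on each $V^\be$ should be a VOA isomorphism $V\simto\sigma_\gamma\cdot V$, which one checks directly against the (unchanged) intertwining operators. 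Granting this, $\sigma_\gamma\cdot V$ and $U$ are simple current extensions of $V_C$ with $(\sigma_\gamma\cdot V)^\be=\sigma_\gamma\circ V^\be\cong U^\be$ for all $\be$, so Proposition \ref{PSCE} yields $U\cong\sigma_\gamma\cdot V\cong V$. I expect the verification that twisting by the base automorphism $\sigma_\gamma$ produces a VOA isomorphic to $V$ to be the main technical point; everything else is a bookkeeping assembly of the three preparatory lemmas.
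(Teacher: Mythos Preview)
Your proposal is correct and matches the paper's proof essentially line for line: invoke Lemma~\ref{delta}, then Lemma~\ref{ga}, then Lemma~\ref{Lsigma}, and conclude with Proposition~\ref{PSCE}. The step you flag as the ``hard part'' is treated as immediate in the paper, since $\sigma_\gamma\circ V$ is simply $V$ itself regarded as a $V_C$-module via the re-embedding $\sigma_\gamma^{-1}:V_C\to V_C\subset V$, so the identity map on $V$ already furnishes the VOA isomorphism $V\cong\sigma_\gamma\circ V$ (your description of this map as ``$\sigma_\gamma$ on $V_C$ and the identity on each $V^\be$'' is slightly off, since $V_C=V^0$ is one of the $V^\be$).
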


\begin{proof} Set $C=D^\perp$.
Let $V=\bigoplus_{\be\in D}V^\be$ and $U=\bigoplus_{\be\in D}U^\be$.
Note that $V^0\cong U^0\cong V_C$.
By Lemma \ref{delta},
there exists $\delta\in \mathcal{Q}_D$ such that
\[
U^\be\cong M_C(0,\delta(\be))\fusion_{V_C} V^\be \quad \text{ for all }\be\in D
\]
as $V_C$-modules.
By Lemma \ref{ga}, there exists $\gamma\in\Z_2^n$ such that $\delta(\be) =\gamma \cdot \be \mod C$ for all $\beta\in D$.
By Lemma \ref{Lsigma} we have
\[
U^\be\cong M_C(0,{\gamma\cdot \be}) \fusion_{V_C} V^\be \cong \sigma_\gamma\circ V^\be
\]
as $V_C$-modules for all $\be \in D$. Hence $\sigma_\gamma \circ V\cong U$ as $V_C$-modules.
By the uniqueness of simple current extensions (Proposition \ref{PSCE}), $\sigma_\gamma \circ V\cong U$ as VOAs.
The theorem follows since $V\cong \sigma_\gamma \circ V$ as VOAs.
\end{proof}

Combining Proposition \ref{LID} and Theorem \ref{TUni}, we obtain the following corollary:

\begin{corollary}\label{MC} For a subcode $D$ of the exceptional triply even code $D^{ex}$ of length $48$, the isomorphism class of a framed VOA of central charge $24$ with the $\sfr{1}{16}$-code $D$ is uniquely determined.
\end{corollary}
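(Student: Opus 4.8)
The plan is to derive Corollary~\ref{MC} as an immediate application of Theorem~\ref{TUni}, using Proposition~\ref{LID} to verify the dimension hypothesis. First I would reduce to the case where $D$ contains the all-one vector $\mathbf{1}$: since $D^{ex}$ is triply even of length $n=48$ (which is divisible by $16$) and every subcode of a triply even code is triply even, the only nontrivial hypothesis of Theorem~\ref{TUni} that requires checking is that $\mathbf{1}\in D$ and that $\dim(D\cdot D)=\binom{d}{2}+1$, where $d=\dim D$. For a holomorphic framed VOA the $\sfr{1}{16}$-code must contain $\mathbf{1}$ (this is part of the realizability condition recalled in the introduction, following \cite{LY}), so any $D$ arising as a $\sfr{1}{16}$-code of such a VOA already satisfies $\mathbf{1}\in D$; thus we may assume $\mathbf{1}\in D\subseteq D^{ex}$ throughout.

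Next I would invoke Proposition~\ref{LID} directly. That proposition asserts precisely that for any $d$-dimensional subcode $D$ of $D^{ex}$ containing $\mathbf{1}$, one has $\dim(D\cdot D)=\binom{d}{2}+1$. Hence the pair of hypotheses ``$\mathbf{1}\in D$'' and ``$\dim(D\cdot D)=\binom{d}{2}+1$'' of Theorem~\ref{TUni} is met by every subcode $D$ of $D^{ex}$ containing $\mathbf{1}$. Applying Theorem~\ref{TUni} with $n=48$ then yields that any two holomorphic framed VOAs $U$ and $V$ of central charge $24$ sharing the same $\sfr{1}{16}$-code $D$ are isomorphic as VOAs, which is exactly the uniqueness claim.

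I do not expect any genuine obstacle here, since all the work has been front-loaded into Proposition~\ref{LID} (the linear independence of $\mathcal{B}=\{\mathbf{1}\}\cup\{\be\cdot\be'\}$, ultimately resting on the basis computation of Lemma~\ref{Cex}) and into Theorem~\ref{TUni} (the matching of $V_C$-module structures via $\delta\in\mathcal{Q}_D$, the realization $\delta=\eta(\gamma)$ from Lemma~\ref{ga}, and the uniqueness of simple current extensions in Proposition~\ref{PSCE}). The only point deserving care is the bookkeeping on the assumption $\mathbf{1}\in D$: one should note that Proposition~\ref{LID} is stated for subcodes \emph{containing} $\mathbf{1}$, and that the realizability of $D$ as a $\sfr{1}{16}$-code forces this. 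With that observation in place, the corollary follows by combining Proposition~\ref{LID} and Theorem~\ref{TUni}, as the text already indicates.
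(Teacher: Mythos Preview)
Your proposal is correct and matches the paper's approach exactly: the paper simply states that the corollary follows by combining Proposition~\ref{LID} and Theorem~\ref{TUni}, and you have spelled out precisely how (including the observation that $\mathbf{1}\in D$ is forced by the realizability condition from \cite{LY}, so that the hypotheses of both results are satisfied).
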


%\begin{remark}
%Let $V=\oplus_{\be\in D} V^\be$ be a framed VOA with the $\sfr{1}{16}$-code $D$ and $V^0=V_C$.
%In \cite{LY}, it was shown that if $\gamma \in \mathcal{P}=\{\gamma \in \Z_2^{48}\mid \gamma\cdot \be \in C \text{ for all } \be\in D\}$, then the $\sigma$-involution $\sigma_{\gamma}\in V_C$ will lift to an automorphism of $V$.
%\end{remark}

\section{Isomorphisms of holomorphic framed VOAs of central charge $24$ associated to quadratic spaces} \label{sec:qs}
In this section, we discuss isomorphisms between holomorphic VOAs of central charge $24$ associated to some maximal totally singular subspaces.
%prove that the isomorphism class of a holomorphic framed VOA associated to a quadratic space is uniquely
%determined by the Lie algebra structure.

\subsection{Quadratic subspaces and maximal totally singular subspaces}\label{sec:2.4}
First, we review a classification  of maximal totally singular subspaces up to certain equivalence from \cite{LS}. For the notation
and the detail, see \cite[Section 4]{LS}.

%Let $R$ be a $2m$-dimensional vector space over $\F_2$.
%A form $\langle\cdot,\cdot\rangle:R\times R\to \F_2$ is said to be {\it symplectic} if it is a symmetric bilinear form with $\langle a,a\rangle=0$ for all $a\in R$.
%A map $q:R\to\F_2$ is called a {\it quadratic} form associated to $\langle\cdot,\cdot\rangle$ if $\langle a,b\rangle=q(a+b)+q(a)+q(b)$ for all $a,b\in R$, and the pair
%Let $q$ be a quadratic form.
%$(R,q)$ is called a {\it quadratic space} if $q$ is a quadratic form on $R$.
%We often denote it by $R$ simply.
%A quadratic space $(R,q)$ is said to be {\it non-singular} if $R^\perp=\{a\in R\mid \langle a,R\rangle=0\}=0$.
%A vector $a\in R$ is said to be {\it singular} and {\it non-singular} if $q(a)=0$ and $q(a)=1$, respectively.
%A subspace $S$ of $R$ is said to be {\it totally singular} if any vector in $S$ is singular.
%A quadratic form $q$ is said to be {\it of plus type} and {\it of minus type} if the dimension of a maximal totally singular subspace of $(R,q)$ is $m$ and $m-1$, respectively.
%Let $O(R,q)$ denote the orthogonal group of $(R,q)$, the subgroup of ${\rm GL}(R)$ preserving $q$.

Let $(R,q)$ be a $2m$-dimensional quadratic space of plus type over $\F_2$. Then $(R^3,q^3)$ is a $6m$-dimensional quadratic
space of plus type over $\F_2$, where $q^3:R^3\to\F_2$, $q^3(v_1,v_2,v_3)=\sum_{i=1}^3 q(v_i)$.

\medskip

Consider the following
condition on maximal totally singular subspaces $\mathcal{S}$ of $R^3$:
\begin{eqnarray}
(a_1,a_2,0),(0,a_2,a_3)\in\mathcal{S}\ \text{for some}\ a_i\in R\setminus\{0\}\ {\rm with}\ q(a_i)=0.\label{Cond2}
\end{eqnarray}
We will recall the construction of certain maximal totally singular subspaces of $R^3$ not satisfying (\ref{Cond2}) from \cite{LS}.

\begin{theorem}\label{TClassify}{\rm (\cite[Theorem 4.6]{LS})} Let $S_1$ be a $k_1$-dimensional totally singular subspace of $R$ and let $S_2$ be a $k_2$-dimensional totally singular subspace of $S_1$.
Assume that $m-k_1-k_2$ is even. Let $P$ be an $(m-k_1-k_2)$-dimensional non-singular subspace of $S_1^\perp$ of
$\varepsilon$ type, where $\varepsilon\in\{\pm\}$. Let $Q$ and $T$ be complementary subspaces of $S_1$ and of $S_2$ in
$(S_1\perp P)^\perp$ and in $(S_2\perp P)^\perp$, respectively.
%Let $U=Q^\perp$.
Then the following hold:
\begin{enumerate}[{\rm (1)}]
\item $T$ and $Q^\perp$ are non-singular isomorphic quadratic spaces;
\item Let $\varphi$ be an isomorphism of quadratic spaces from $T$ to $Q^\perp$ and set
%let %$\mathcal{S}(S_1,S_2,P,Q,T,\varphi)$ be the subspace of $R^3$ defined by
\begin{eqnarray*}
%{\rm Span}_{\F_2}\biggl\{
%(s_1+p+q,s_2+p+t,q+\varphi(t)) \big|\ s_i\in S_i,p\in P,q\in Q,t\in T\biggr\}.
\mathcal{S}(S_1,S_2,P,Q,T,\varphi)=\{(s_1+p+q,s_2+p+t,q+\varphi(t)) |\ s_i\in S_i,p\in P,q\in Q,t\in T\}.
\end{eqnarray*}
Then $\mathcal{S}(S_1,S_2,P,Q,T,\varphi)$ is a maximal totally singular subspace of $R^3$;
\item $\mathcal{S}(S_1,S_2,P,Q,T,\varphi)$ depends only on $k_1,k_2$ and $\varepsilon$ up to $O(R,q)\wr\Sym_3$.
%\item $\mathcal{S}(S_1,S_2,P,Q,T,\varphi)$ does not satisfy (\ref{Cond2}).
\end{enumerate}
\end{theorem}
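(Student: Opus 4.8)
The plan is to verify (1)--(3) in turn; the substance lies in a type computation for (1) and in a choice-independence argument for (3), while (2) is a direct verification. First I would record the basic dimension count. Since $P\subseteq S_1^\perp$ is non-degenerate and $S_1$ is totally singular, $S_1\cap P=0$, so $S_1\perp P$ is a genuine orthogonal sum of dimension $m-k_2$ whose radical is exactly $S_1$; likewise, using $P\subseteq S_1^\perp\subseteq S_2^\perp$, the sum $S_2\perp P$ has dimension $m-k_1$ with radical $S_2$. Taking perpendiculars in the non-degenerate space $(R,q)$ gives $\dim(S_1\perp P)^\perp=m+k_2$ and $\dim(S_2\perp P)^\perp=m+k_1$, and a short computation shows that their radicals are again $S_1$ and $S_2$. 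Hence $\dim Q=m+k_2-k_1$ and $\dim T=m+k_1-k_2$, both even since $m-k_1-k_2$ is even, and $\dim Q^\perp=2m-\dim Q=m+k_1-k_2=\dim T$.

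For part (1), because $S_1$ is the radical of $(S_1\perp P)^\perp$ and $Q$ is a complement of it, the form restricted to $Q$ is non-degenerate; the same argument gives $T$ non-degenerate, and therefore $Q^\perp$ is non-degenerate. Over $\F_2$ two non-degenerate quadratic spaces of equal even dimension are isometric precisely when their types agree, so it remains to match types. The key step is to choose a decomposition of $R$ adapted to the flag $S_2\subseteq S_1$ and to $P$: pick a totally singular $S_1^*$ dual to $S_1$ with $S_2^*\subseteq S_1^*$ dual to $S_2$, forming hyperbolic spaces $H_2\subseteq H_1$, and arrange $P\subseteq H_1^\perp$. Writing $H_1^\perp=P\perp P'$, one finds $\mathrm{type}(P')=\varepsilon$ since $\mathrm{type}(H_1^\perp)=+$. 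I would then identify $Q$ with a space isometric to $P'$, and identify $T$ modulo its radical with a hyperbolic space orthogonally summed with $P'$; both carry type $\varepsilon$. Finally $R=Q\perp Q^\perp$ forces $\mathrm{type}(Q^\perp)=\mathrm{type}(Q)=\varepsilon$, so $Q^\perp\cong T$.

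For part (2), the parametrizing map $(s_1,s_2,p,q,t)\mapsto(s_1+p+q,\,s_2+p+t,\,q+\varphi(t))$ is linear, so $\mathcal{S}$ is a subspace; it is injective, because the third coordinate forces $q=0$ and $t=0$ (as $Q\cap Q^\perp=0$ and $\varphi$ is injective), the first then forces $s_1=p=0$, and the second forces $s_2=0$. Thus $\dim\mathcal{S}=k_1+k_2+(m-k_1-k_2)+(m+k_2-k_1)+(m+k_1-k_2)=3m$, the maximal value in the $6m$-dimensional plus-type space $(R^3,q^3)$. Expanding $q^3$ on a general element and using that $S_1,S_2$ are singular, that $P\subseteq S_2^\perp$, that $Q\subseteq(S_1\perp P)^\perp$ and $T\subseteq(S_2\perp P)^\perp$, and that $\varphi(T)\subseteq Q^\perp$, everything collapses to $q(t)+q(\varphi(t))=0$, the last equality because $\varphi$ is an isometry. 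Hence $\mathcal{S}$ is totally singular of dimension $3m$, i.e. maximal.

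For part (3), given a second datum $(S_1',S_2',P',Q',T',\varphi')$ with the same $k_1,k_2,\varepsilon$, I would first build an isometry of $S_1\perp P$ onto $S_1'\perp P'$ carrying $S_2$ to $S_2'$ (combine any linear isomorphism $S_1\to S_1'$ respecting $S_2\to S_2'$ with an isometry $P\to P'$, which exists as the types agree) and extend it to $g_0\in O(R,q)$ by Witt's extension theorem. Applying $(g_0,g_0,g_0)\in O(R,q)\wr\Sym_3$ reduces to the case $S_1=S_1'$, $S_2=S_2'$, $P=P'$. It then remains to absorb the auxiliary choices: replacing $\varphi$ by $\varphi'=\rho\varphi$ is realized by the third-coordinate isometry $\id_Q\perp\rho\in O(R,q)$, while changing the complements $Q,T$ (which differ from $Q',T'$ only by maps into the radicals $S_1,S_2$) is realized by Eichler/Siegel-type unipotent isometries fixing $P$ and the flag. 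The main obstacle is precisely this last coordination: one must check that these unipotent elements, inserted in the correct coordinates, conjugate the entire tuple $(S_1,S_2,P,Q,T,\varphi)$ consistently and thus send $\mathcal{S}$ to $\mathcal{S}'$, rather than merely matching $Q$ and $T$ in isolation. Granting this, $\mathcal{S}$ and $\mathcal{S}'$ lie in a single $O(R,q)\wr\Sym_3$-orbit, which proves (3).
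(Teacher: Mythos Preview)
The paper does not prove this theorem: it is quoted verbatim from \cite[Theorem 4.6]{LS} and no argument is given here. So there is nothing in the present paper to compare your proof against.

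That said, your sketch is mathematically sound and follows what one would expect the original proof in \cite{LS} to do. The dimension count and the verification in (2) are correct as written; in particular the injectivity argument via $Q\cap Q^\perp=0$ and the cancellation in $q^3$ are exactly the right observations. For (1), your type computation is correct: with $H_1=S_1\oplus S_1^*$ chosen so that $P\subseteq H_1^\perp$, one gets $(S_1\perp P)^\perp=S_1\oplus P'$ with $P'$ of type $\varepsilon$, hence $Q\cong P'$ has type $\varepsilon$; and $(S_2\perp P)^\perp=S_2\oplus H_{12}\oplus P'$ with $H_{12}$ hyperbolic, so $T$ also has type $\varepsilon$; then $R=Q\perp Q^\perp$ forces $\mathrm{type}(Q^\perp)=\varepsilon$ as well.

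For (3), your outline via Witt extension plus unipotent (Eichler--Siegel) transvections is the standard route and is correct in spirit. One point worth making explicit, since you flag it as the main obstacle: the three coordinates are acted on by \emph{independent} elements of $O(R,q)$, so you may adjust the first coordinate to fix the complement $Q$ and separately adjust the second coordinate to fix $T$, without these choices interfering. Concretely, a transvection along $S_1$ in the first coordinate carries $(s_1+p+q,\,\cdot\,,\,\cdot\,)$ to $(s_1'+p+q',\,\cdot\,,\,\cdot\,)$ with $q'\in Q'$, while leaving coordinates two and three untouched; likewise for $T$ in the second coordinate. After that, the residual ambiguity in $\varphi$ is absorbed by $\id_Q\perp\rho$ on the third coordinate, exactly as you say. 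This decoupling is what makes the ``coordination'' issue harmless.
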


\begin{notation}
By (3), we denote $\mathcal{S}(S_1,S_2,P,Q,T,\varphi)$ by $\mathcal{S}(m,k_1,k_2,\varepsilon)$.
\end{notation}

\begin{theorem}\label{TClassify2}{\rm (\cite[Theorem 4.8]{LS})} Let $S_1$ be a $k_1$-dimensional totally singular subspace of $R$ and let $S_2$ be a $k_2$-dimensional totally singular subspace of $S_1$.
Assume that $m-k_1-k_2$ is odd. Let $P$ and $Q$ be $(m-k_1-k_2-1)$-dimensional and $(m-k_1+k_2-1)$-dimensional
non-singular subspaces of $S_1^\perp$ and of $(S_1\perp P)^\perp$ of plus type, respectively. Let $B$ and $T$ be
complementary subspaces of $S_1$ and of $S_2$ in $(S_1\perp P\perp Q)^\perp$ and in $(S_2\perp P\perp B)^\perp$,
respectively. Let $U=(Q\perp B)^\perp$. Then the following hold:

\begin{enumerate}[{\rm (1)}]
\item $B$ is a $2$-dimensional non-singular subspace of plus type;
\item $T$ and $U$ are isomorphic non-singular quadratic spaces of plus type;
\item Let $y$ be the non-singular vector in $B$ and let $z$ be a non-zero singular vector in $B$.
Let $\varphi$ be an isomorphism of quadratic spaces from $T$ to $U$ and set {\small
\begin{eqnarray*}
&&\mathcal{S}(S_1,S_2,P,Q,B,T,z,\varphi)=\\
&&\biggr\langle(s_1+p+q,s_2+p+t,q+\varphi(t)),(y,y,0),(y,0,y),(z,z,z)
%\\&&\hspace{2cm}
\bigg|\ s_i\in S_i, p\in P,q\in Q,t\in T\biggl\rangle_{\F_2}.
\end{eqnarray*}}
Then $\mathcal{S}(S_1,S_2,P,Q,B,T,z,\varphi)$ is a maximal totally singular subspace of $R^3$;
\item $\mathcal{S}(S_1,S_2,P,Q,B,T,z,\varphi)$ depends only on $k_1,k_2$ up to $O(R,q)\wr\Sym_3$.
%\item $\mathcal{S}(S_1,S_2,P,Q,B,T,z,\varphi)$ does not satisfy (\ref{Cond2}).

\end{enumerate}
\end{theorem}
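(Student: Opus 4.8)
The plan is to dispatch the four assertions in turn: (1) and (2) are quadratic-form linear algebra over $\F_2$, (3) is a direct verification exploiting the ``doubling'' built into the generators, and (4) is the real content, an independence-of-choices statement proved by Witt's theorem.

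For (1) and (2) I would work entirely with dimensions, radicals, and the Arf invariant. Writing $W=(S_1\perp P\perp Q)^\perp$, one has $W^\perp=S_1\perp P\perp Q$, so the radical of $W$ is $W\cap W^\perp=\mathrm{rad}(S_1\perp P\perp Q)=S_1$; since $\dim W=k_1+2$, the complement $B$ is $2$-dimensional and isometric to $W/S_1$, hence non-singular. To fix the type I would use additivity of the Arf invariant: as $P\perp Q\perp B$ is non-degenerate of dimension $2m-2k_1$, its perp is a non-degenerate $2k_1$-space containing the maximal totally singular $S_1$, hence hyperbolic, giving an orthogonal decomposition $R=(S_1\oplus S_1')\perp P\perp Q\perp B$ with all summands but $B$ of plus type (Arf invariant $0$); since $R$ is of plus type, additivity forces $B$ to be of plus type. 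The identical computation shows $T\cong (S_2\perp P\perp B)^\perp/S_2$ and $U=(Q\perp B)^\perp$ are non-degenerate of the common dimension $m+k_1-k_2-1$, and the decompositions $R=(S_2\oplus S_2')\perp P\perp B\perp T$ and $R=Q\perp B\perp U$ force both to be of plus type. Over $\F_2$ a non-degenerate quadratic space is determined up to isometry by dimension and type, so $T\cong U$, furnishing the required $\varphi$.

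For (3) I would check that $q^3$ and its polarization vanish on the listed generators. The decisive feature is that each of $s_1,s_2,p,q,t$ occupies exactly two coordinates (with $t$ matched to $\varphi(t)$, an isometry), so $q^3(s_1+p+q,\,s_2+p+t,\,q+\varphi(t))=(q(p)+q(q))+(q(p)+q(t))+(q(q)+q(t))=0$; similarly $(y,y,0),(y,0,y)$ place the non-singular $y$ in two slots and $(z,z,z)$ places the singular $z$ in three, so all generators are singular, while the polarizations between them vanish using the orthogonality relations $B\perp S_1,P,Q$, $T\perp S_2,P,B$, $U\perp Q,B$ and the fact that $\varphi$ preserves the bilinear form. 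Maximality then follows from a dimension count: the parametrization $(s_1,s_2,p,q,t)\mapsto(\cdots)$ is injective because $S_1\oplus P\oplus Q$ and $Q\oplus U$ are direct (non-degeneracy), yielding dimension $3m-3$, and the three extra generators are independent of this part and of one another since $B\cap(S_1+P+Q)=0$ and $y,z$ are independent in $B$. As $(R^3,q^3)$ has plus type with Witt index $3m$, a totally singular subspace of dimension $3m$ is maximal.

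For (4) the tool is Witt's extension theorem and its relative forms. I would first act diagonally by $(g,g,g)$ with $g\in O(R,q)$ to carry the flag $S_2\subset S_1$ to a fixed standard flag — the stabilizer of $S_1$ surjects onto $\mathrm{GL}(S_1)$, giving transitivity on $k_2$-dimensional subspaces — and then, inside that stabilizer, standardize the non-singular subspaces $P$ and $Q$ of prescribed dimension and plus type. It remains to absorb the residual choices: the complements $B$ and $T$, the singular vector $z$ (hence $y$) in $B$, and the isometry $\varphi$. \textbf{I expect this last step to be the main obstacle.} One must realize changes of complement and changes of $\varphi$ by orthogonal transformations acting \emph{compatibly in all three coordinates}, and use the $\Sym_3$ factor to interchange the roles of the slots — for instance to exchange the two singular vectors of $B$, or to reconcile two isometries $T\to U$ differing by an element of $O(U)$. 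The crux is verifying that each such adjustment is effected by a single element of $O(R,q)\wr\Sym_3$, so that the $O(R,q)\wr\Sym_3$-orbit of $\mathcal{S}(S_1,S_2,P,Q,B,T,z,\varphi)$ depends only on $k_1$ and $k_2$.
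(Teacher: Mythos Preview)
The paper does not prove this theorem: it is quoted verbatim from \cite[Theorem~4.8]{LS} and stated without proof, so there is no ``paper's own proof'' to compare against.

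That said, your argument for (1)--(3) is correct and is the natural one. The radical computations and the Arf-invariant additivity you invoke are exactly what is needed to pin down the types of $B$, $T$, and $U$; the singularity check in (3) is routine once one notes that every ``free'' parameter $s_1,s_2,p,q,t$ appears in exactly two coordinates (with $t$ paired to $\varphi(t)$), and your injectivity/dimension count for maximality is clean.

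For (4) your outline is the right strategy --- Witt extension to normalize the flag $S_2\subset S_1$, then $P$ and $Q$, then the residual data --- and your honest flag that the last step is the obstacle is accurate. What is missing is the explicit bookkeeping showing that each remaining choice (the complement $B$, the complement $T$, the singular vector $z\in B$, and the isometry $\varphi$) can be absorbed by an element of $O(R,q)\wr\Sym_3$ \emph{without disturbing what has already been normalized}. Two of these are easy once stated: changing $\varphi$ to $\psi\varphi$ for $\psi\in O(U,q)$ is effected by $(1,1,g_3)$ with $g_3$ a Witt extension of $\psi$ fixing $Q\perp B$ pointwise; swapping the two singular vectors of $B$ is effected by $(g,g,g)$ with $g$ the reflection in $y$, which fixes $S_1,P,Q$ (all orthogonal to $B$) and hence the already-normalized data. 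The genuinely delicate point is the change of complements $B$ and $T$: replacing $B$ by another complement $B'$ of $S_1$ in $(S_1\perp P\perp Q)^\perp$ amounts to a transvection $b\mapsto b+\lambda(b)$ with $\lambda\colon B\to S_1$, and one must check that such a transvection extends to an isometry of $R$ fixing $S_1,P,Q$. This is true, but it is the step you have not written down, and it (together with the analogous statement for $T$) is what actually completes the proof of (4).
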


\begin{notation}
By (4), we denote $\mathcal{S}(S_1,S_2,P,Q,B,T,z,\varphi)$ by $\mathcal{S}(m,k_1,k_2)$.
\end{notation}

In \cite{LS}, maximal totally singular subspaces of $R^3$ were classified.

\begin{theorem}\label{TC}{\rm (\cite[Theorem 5.11]{LS})} Let $\mathcal{S}$ be a maximal totally singular subspace of $R^3$.
Then, up to $O(R,q)\wr\Sym_3$, one of the following holds:
\begin{enumerate}[{\rm (1)}]
%\item $\mathcal{S}$ satisfies (\ref{Cond1});
\item $\mathcal{S}$ satisfies (\ref{Cond2});
\item $\mathcal{S}$ is conjugate to $\mathcal{S}(S_1,S_2,P,Q,T,\varphi)$ defined as in Theorem \ref{TClassify};
\item $\mathcal{S}$ is conjugate to $\mathcal{S}(S_1,S_2,P,Q,B,T,z,\varphi)$ defined as in Theorem \ref{TClassify2}.
\end{enumerate}
\end{theorem}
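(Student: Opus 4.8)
The plan is to prove the statement as a clean dichotomy governed by condition~(\ref{Cond2}). If $\mathcal{S}$ satisfies~(\ref{Cond2}) we are in case~(1) and there is nothing to do, so the entire content is to show that a maximal totally singular subspace \emph{not} satisfying~(\ref{Cond2}) is conjugate, under $O(R,q)\wr\Sym_3$, to one of the explicit models of Theorems~\ref{TClassify} and~\ref{TClassify2}. Throughout I would exploit two structural facts: since $(R^3,q^3)$ is non-degenerate of plus type of dimension $6m$, every maximal totally singular subspace has dimension exactly $3m$; and the group $O(R,q)\wr\Sym_3$ lets me freely permute the three coordinates (the $\Sym_3$ factor) while simultaneously moving singular and non-singular configurations into normal form via the classical Witt extension theorem (the $O(R,q)$ factor).

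First I would extract the ``corner'' data. For each coordinate consider the totally singular subspace of vectors of $\mathcal{S}$ supported on that coordinate alone, e.g. $\{a\in R\mid (a,0,0)\in\mathcal{S}\}$, and for each pair consider $\mathcal{S}\cap(R\oplus R\oplus 0)$ together with its projections. Because $q^3$ vanishes on $\mathcal{S}$, all of these are totally singular subspaces of $R$, and likewise for the images of $\mathcal{S}$ under the coordinate projections modulo the corners. After permuting coordinates by $\Sym_3$ I would place the appropriate largest such singular subspace in the first coordinate, identify it with $S_1$, and identify the smaller totally singular subspace pairing trivially against the remaining coordinates with $S_2\subseteq S_1$. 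This fixes the invariants $k_1=\dim S_1$ and $k_2=\dim S_2$.

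The heart of the argument is the interaction part of $\mathcal{S}$ transverse to the corners. The negation of~(\ref{Cond2}) says precisely that no nonzero singular vector is the shared middle coordinate of a $(1,2)$-supported and a $(2,3)$-supported element of $\mathcal{S}$; up to $\Sym_3$ this is the genericity that prevents degenerate overlaps and forces the transverse part of $\mathcal{S}$ to project isomorphically across the coordinates, i.e. to be the graph of a single isometry $\varphi$. From this graph I would read off the non-singular block $P$ common to the first two coordinates, the complementary pieces $Q$ and $T$, and $\varphi$ as the correspondence $t\mapsto(\text{third coordinate})$; that $T\cong Q^\perp$ and that $\varphi$ is an isometry then follow from $q^3|_{\mathcal{S}}=0$ together with the dimension count $\dim\mathcal{S}=3m$. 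The parity of $m-k_1-k_2$ dictates the outcome: when it is even, $P$ is non-singular of a definite $\varepsilon$-type inside $S_1^\perp$ and $\mathcal{S}$ matches the form of Theorem~\ref{TClassify}; when it is odd, a parity obstruction over $\F_2$ prevents a pure graph and forces the extra $2$-dimensional non-singular block $B$, a singular vector $z$, and the diagonal generators $(y,y,0),(y,0,y),(z,z,z)$, yielding the form of Theorem~\ref{TClassify2}.

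Finally, with candidate data $(S_1,S_2,P,Q,T,\varphi)$ (respectively $(S_1,S_2,P,Q,B,T,z,\varphi)$) in hand, I would invoke Witt's theorem to transport $\mathcal{S}$ by an element of $O(R,q)\wr\Sym_3$ onto the corresponding standard model, using that Theorems~\ref{TClassify}(3) and~\ref{TClassify2}(4) already guarantee these models depend only on $(m,k_1,k_2,\varepsilon)$, respectively $(m,k_1,k_2)$, up to conjugacy. The step I expect to be the main obstacle is showing that the failure of~(\ref{Cond2}) genuinely forces the transverse relation to be the graph of an isometry of the correct type, and in particular pinning down the even/odd split: this is where one must combine total singularity, the maximality constraint $\dim\mathcal{S}=3m$, and a careful $\F_2$ parity analysis, and where the exceptional diagonal generators of the odd case must be shown to be unavoidable rather than an artifact of the chosen coordinates.
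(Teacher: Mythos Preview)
The paper does not contain a proof of this theorem: it is quoted verbatim as \cite[Theorem 5.11]{LS} and used as a black box, so there is no ``paper's own proof'' to compare against. Your outline is therefore not being checked against anything in the present article.

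That said, your sketch is broadly in the right spirit for how such a classification goes, but it is too vague at the decisive step. A couple of concrete points. First, your identification of the invariants is slightly off: in the model $\mathcal{S}(S_1,S_2,P,Q,T,\varphi)$ the three ``corner'' subspaces $\{r:(r,0,0)\in\mathcal{S}\}$, $\{r:(0,r,0)\in\mathcal{S}\}$, $\{r:(0,0,r)\in\mathcal{S}\}$ have dimensions $k_1$, $k_2$, $0$ respectively, so $S_2$ is simply the second corner after permuting coordinates, not ``the smaller totally singular subspace pairing trivially against the remaining coordinates''. Second, and more importantly, the assertion that the failure of~(\ref{Cond2}) ``forces the transverse part of $\mathcal{S}$ to project isomorphically across the coordinates, i.e.\ to be the graph of a single isometry $\varphi$'' is exactly the content of the theorem and is not something one can wave through: condition~(\ref{Cond2}) only constrains \emph{singular} shared coordinates, so non-singular overlaps are still possible, and indeed the block $P$ (and in the odd case the block $B$ with its non-singular $y$) arises precisely from such overlaps. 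Turning this into an honest structure theorem requires a genuine case analysis on the pairwise intersections $\mathcal{S}\cap(R\oplus R\oplus0)$ etc.\ and their projections, together with repeated use of Witt extension, which is what the cited paper \cite{LS} carries out.
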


\subsection{Holomorphic VOAs $\mathfrak{V}(\mathcal{S})$}\label{sec:5}
Next  we review some facts about the VOA $\mathfrak{V}(\mathcal{S})$ defined in \cite{Sh6,LS}.
Throughout this subsection, $V$ denotes the VOA $V_{\sqrt{2}E_8}^+$.

Let $R(V)$ be the set of isomorphism classes of irreducible $V$-modules.
Then under the fusion rules, $R(V)$ forms an elementary abelian $2$-group of order $2^{10}$ (\cite{ADL,Sh2}).
Consider the map $q_V:R(V)\to\F_2$ defined by setting $q_V([M])=0$ and $1$ if $M$ is $\Z$-graded and is $(\Z+1/2)$-graded, respectively.
Then $(R(V),q_V)$ is a $10$-dimensional quadratic space of plus type over $\F_2$ (\cite[Theorem 3.8]{Sh2})
and $(R(V)^3,q_V^3)$  is a  $30$-dimensional quadratic space of plus type over $\F_2$.

\begin{notation}\label{nota:5.1}
Let $\mathcal{T}$ be a subset of $R(V)^3$.
We set $\mathfrak{V}(\mathcal{T})=\bigoplus_{[M]\in\mathcal{T}}M$ and often view it as a $V^{\otimes3}$-module by identifying  $R(V^{\otimes 3})$ with $R(V)^3$ (cf.\ \cite[Section 4.7]{FHL}).
\end{notation}

\begin{proposition}\label{PS1}{\rm (\cite[Proposition 4.4]{Sh6})} Let $\mathcal{T}$ be a subset of $R(V)^3$.
Then the $V^{\otimes 3}$-module $\mathfrak{V}(\mathcal{T})=\bigoplus_{[M]\in\mathcal{T}}M$
has a simple VOA structure of central charge $24$ by extending its $V^{\otimes3}$-module
structure if and only if $\mathcal{T}$ is a totally singular subspace of
$R(V)^3$. Moreover, $\mathfrak{V}(\mathcal{T})$ is holomorphic if and only if
$\mathcal{T}$ is maximal.
\end{proposition}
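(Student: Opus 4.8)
The plan is to prove the two biconditionals separately, relying on the theory of simple current extensions. The crucial structural input, recorded just before the statement, is that $R(V)$ is an elementary abelian $2$-group under the fusion product; in particular every irreducible $V$-module is a simple current, and $V=V_{\sqrt2E_8}^+$ is simple, rational, $C_2$-cofinite and of CFT type. Hence $V^{\otimes3}$ inherits all of these properties, its irreducible modules are exactly the $M$ with $[M]\in R(V)^3=R(V^{\otimes3})$, and they are all simple currents. I will also use that the conformal weight of such a module modulo $\Z$ is $0$ or $\tfrac12$ according as $q_V^3([M])=0$ or $1$, and that the associated bilinear form of $q_V^3$ records the braiding of these simple currents.

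For the implication ``$\mathcal{T}$ totally singular subspace $\Rightarrow$ simple VOA'', I would first note that, being a subspace, $\mathcal{T}$ contains $0=[V^{\otimes3}]$ and is closed under fusion, so $\mathfrak{V}(\mathcal{T})=\bigoplus_{[M]\in\mathcal{T}}M$ is a candidate simple current extension graded by the finite abelian group $\mathcal{T}$, with $V^{\otimes3}$ as the identity component. Total singularity means $q_V^3$ vanishes on $\mathcal{T}$, so every summand $M$ is $\Z$-graded; moreover $q_V^3|_{\mathcal{T}}=0$ forces its polarization to vanish on $\mathcal{T}$ as well, so the quadratic form (conformal weight mod $\Z$) governing the abelian intertwining algebra structure on $\mathfrak{V}(\mathcal{T})$ is trivial. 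By the simple current extension machinery (\cite{DM} and the abelian intertwining algebra framework used in \cite{Sh6,LS}) the vanishing of this obstruction upgrades the $V^{\otimes3}$-module $\mathfrak{V}(\mathcal{T})$ to an honest integrally graded simple VOA, of central charge $3\times8=24$ since $V$ has central charge $8$.

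For the converse, suppose $\mathfrak{V}(\mathcal{T})$ carries a simple VOA structure of central charge $24$ extending its $V^{\otimes3}$-module structure. Because $24=3\times8$, the conformal element of $\mathfrak{V}(\mathcal{T})$ must be the sum of the three copies of the conformal element of $V$, so $V^{\otimes3}$ is a full subVOA and $\mathfrak{V}(\mathcal{T})$ decomposes as a direct sum of irreducible $V^{\otimes3}$-modules. Integrality of the weights of the VOA forces each summand $M$ to be $\Z$-graded, i.e.\ $q_V^3([M])=0$ for all $[M]\in\mathcal{T}$. The vacuum lies in the component indexed by $0$, so $0\in\mathcal{T}$; and for $[M],[N]\in\mathcal{T}$ the vertex operator restricted to $M\otimes N$ is a nonzero intertwining operator whose image lies in $\mathfrak{V}(\mathcal{T})$, which (using that these modules are simple currents, so $M\boxtimes N$ is irreducible) forces $[M]+[N]\in\mathcal{T}$. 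Thus $\mathcal{T}$ is a subspace on which $q_V^3$ vanishes, i.e.\ a totally singular subspace.

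Finally, for the ``moreover'' statement I would invoke the representation theory of simple current extensions: the irreducible $\mathfrak{V}(\mathcal{T})$-modules are parametrized by the quotient $\mathcal{T}^\perp/\mathcal{T}$, where $\perp$ is taken with respect to the nondegenerate bilinear form of the $30$-dimensional quadratic space $(R(V)^3,q_V^3)$. Since $\mathcal{T}$ is totally singular of dimension $d$, one has $\dim\mathcal{T}^\perp=30-d$ and $|\mathcal{T}^\perp/\mathcal{T}|=2^{30-2d}$, which equals $1$ exactly when $d=15$, i.e.\ when $\mathcal{T}$ is a maximal totally singular (Lagrangian) subspace with $\mathcal{T}=\mathcal{T}^\perp$. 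As holomorphicity is equivalent to $\mathfrak{V}(\mathcal{T})$ having a unique irreducible module, this proves holomorphic $\Leftrightarrow$ maximal. The main obstacle is the existence half of the first equivalence: verifying that total singularity really does kill every obstruction (integrality, locality/Jacobi identity, associativity) to promoting the fusion-closed sum of modules to an honest simple VOA, which is where the abelian intertwining algebra cohomology and the simple current extension construction of \cite{Sh6} carry the weight.
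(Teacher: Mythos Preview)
The paper does not give its own proof of this proposition: it is quoted verbatim as \cite[Proposition 4.4]{Sh6} and used as a black box, so there is no ``paper's approach'' to compare against. Your outline is the standard argument behind that cited result and is essentially correct: the key inputs are exactly that every irreducible $V^{\otimes3}$-module is a simple current, that $q_V^3$ records conformal weights modulo $\Z$ while its polarization records the braiding, and that the simple current/abelian intertwining algebra machinery then upgrades $\mathfrak{V}(\mathcal{T})$ to a simple VOA precisely when these obstructions vanish.

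Two small points worth tightening. First, in the converse you infer that $V^{\otimes3}$ is a \emph{full} subVOA from equality of central charges; this is true here but uses more than ``$24=3\times8$'': one needs that $\omega-\omega'$ is a Virasoro vector of central charge $0$ in a VOA of CFT type (with positive-definite invariant form), forcing $\omega=\omega'$. Second, in the holomorphicity step your parametrization of irreducible $\mathfrak{V}(\mathcal{T})$-modules by $\mathcal{T}^\perp/\mathcal{T}$ is correct, but it is worth noting that this really does exhaust all irreducibles (not just the obvious untwisted ones); this follows because $\mathfrak{V}(\mathcal{T})$ inherits rationality and $C_2$-cofiniteness from $V^{\otimes3}$ and every irreducible restricts to a sum of simple currents, so the orbit description is complete. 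With those caveats your argument matches what \cite{Sh6} proves.
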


\begin{remark}\label{Rem} (\cite[Section 5]{LS})
\begin{enumerate}[{\rm (1)}]
\item A VOA is isomorphic to $\mathfrak{V}(\mathcal{T})$ for some totally singular subspace $\mathcal{T}$ of $R(V)^3$ if and only if it contains a full subVOA isomorphic to $V^{\otimes3}$.
\item If totally singular subspaces $\mathcal{T}_1$ and $\mathcal{T}_2$ of $R(V)^3$ are conjugate under $O(R(V),q_V)\wr\Sym_3$, then the VOAs $\mathfrak{V}(\mathcal{T}_1)$ and $\mathfrak{V}(\mathcal{T}_2)$ are isomorphic.
\end{enumerate}
\end{remark}

\begin{lemma}\label{PS2} {\rm (\cite[Lemma 5.4]{LS})} Let $\mathcal{S}$ be a maximal totally singular subspace of $R(V)^3$.
If $\mathcal{S}$ satisfies (\ref{Cond2}), then $\mathfrak{V}(\mathcal{S})$ is isomorphic to $V_L$ or its
$\Z_2$-orbifold $\tilde{V}_L$ for some even unimodular lattice $L$ of rank $24$ containing $(\sqrt{2}E_8)^{\oplus3}$.
Moreover, if $\mathcal{S}$ contains non-zero vectors $(a_1,0,0)$, $(0,a_2,0)$ and $(0,0,a_3)$ then $\mathfrak{V}(\mathcal{S})\cong V_L$ for a lattice with the same properties.
\end{lemma}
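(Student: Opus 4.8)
The plan is to use the freedom, granted by Remark \ref{Rem}(2), to replace $\mathcal{S}$ by an $O(R(V),q_V)\wr\Sym_3$-conjugate, and then to exhibit a large lattice-type full subVOA inside $\mathfrak{V}(\mathcal{S})$. Write $\xi:=[V_{\sqrt2E_8}^-]\in R(V)$. Then $\xi$ is a nonzero singular vector, since $V_{\sqrt2E_8}^-$ has integral weights and so $q_V(\xi)=0$, and the order-$2$ extension $V\oplus V_{\sqrt2E_8}^-$ is exactly the lattice VOA $V_{\sqrt2E_8}$. As $(R(V),q_V)$ is nondegenerate of plus type, $O(R(V),q_V)$ acts transitively on nonzero singular vectors (Witt's theorem), so I can choose $g_1,g_2,g_3\in O(R(V),q_V)$ with $g_ia_i=\xi$; applying $g_1\times g_2\times g_3$ moves the two vectors in (\ref{Cond2}) to $(\xi,\xi,0)$ and $(0,\xi,\xi)$. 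By Remark \ref{Rem}(2) this leaves the isomorphism class of $\mathfrak{V}(\mathcal{S})$ unchanged, so I may assume $(\xi,\xi,0),(0,\xi,\xi)\in\mathcal{S}$, and hence also $(\xi,0,\xi)\in\mathcal{S}$.

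Next I would identify the subVOA cut out by $\langle(\xi,\xi,0),(0,\xi,\xi)\rangle$. Set $M=(\sqrt2E_8)^{\oplus3}$, and let $\theta^{\otimes3}$ denote the lift of $-1$ on $M$ obtained from the $-1$-lift $\theta$ on $\sqrt2E_8$. Writing $V^{0}=V_{\sqrt2E_8}^+$ and $V^{1}=V_{\sqrt2E_8}^-$, the four modules indexed by $\langle(\xi,\xi,0),(0,\xi,\xi)\rangle$ are precisely the summands $V^{\eta_1}\otimes V^{\eta_2}\otimes V^{\eta_3}$ with $\eta_1+\eta_2+\eta_3\equiv0\pmod2$, whose direct sum is the $\theta^{\otimes3}$-fixed subspace of $V_M=V_{\sqrt2E_8}^{\otimes3}$. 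By Proposition \ref{PS1} the subspace $\langle(\xi,\xi,0),(0,\xi,\xi)\rangle$ is totally singular, so $W:=\mathfrak{V}(\langle(\xi,\xi,0),(0,\xi,\xi)\rangle)$ is a full subVOA of $\mathfrak{V}(\mathcal{S})$ (it contains $V^{\otimes3}$ and hence the conformal vector), and by uniqueness of the simple current extension (Proposition \ref{PSCE}) its VOA structure coincides with that of $V_M^+$. Thus $\mathfrak{V}(\mathcal{S})$ is a holomorphic VOA of central charge $24$ containing $V_M^+$ as a full subVOA.

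The final step, and the main input, is the structure theory of holomorphic extensions of $V_M^+$: a holomorphic VOA of central charge $24$ containing $V_M^+$ as a full subVOA is isomorphic to $V_L$ or to its $\Z_2$-orbifold $\tilde V_L$ for some even unimodular lattice $L$, the former occurring exactly when only untwisted modules $V_{M+\mu}^\pm$ appear and the latter when a $\theta$-twisted module appears. This rests on the classification of irreducible $V_M^+$-modules and is the DGM-type result \cite{DGM}; it is where the real work sits, since one must show that the decomposition of $\mathfrak{V}(\mathcal{S})$ over $V_M^+$ is forced into exactly one of these two patterns. Finally, $V_M^+\subseteq V_L$ in the first case and $V_M^+\subseteq V_L^+\subseteq\tilde V_L$ in the second both force $M\subseteq L$, so $L$ contains $(\sqrt2E_8)^{\oplus3}$, which proves the first assertion.

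For the ``moreover'' statement I would run the identical normalization starting from the three pure vectors: mapping $(a_1,0,0),(0,a_2,0),(0,0,a_3)$ to $(\xi,0,0),(0,\xi,0),(0,0,\xi)$ shows $\mathcal{S}\supseteq\langle(\xi,0,0),(0,\xi,0),(0,0,\xi)\rangle$, so that $\mathfrak{V}(\mathcal{S})$ now contains the full lattice VOA $V_M=V_{\sqrt2E_8}^{\otimes3}$ itself as a full subVOA. A holomorphic VOA containing $V_M$ is built solely from untwisted modules $V_{M+\mu}$ and is therefore a lattice VOA $V_L$ for an even unimodular $L\supseteq M$, giving the claimed conclusion $\mathfrak{V}(\mathcal{S})\cong V_L$ with $L\supseteq(\sqrt2E_8)^{\oplus3}$.
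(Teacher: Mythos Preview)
The paper does not actually prove this lemma; it is quoted verbatim from \cite[Lemma 5.4]{LS} and no argument is supplied here. So there is no ``paper's own proof'' to compare against in the present article.

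That said, your sketch is essentially the argument one expects (and the one behind \cite[Lemma 5.4]{LS}). The normalization via Witt's theorem is correct: since $\mathcal{S}$ is totally singular, the $a_i$ are automatically singular, and $O(R(V),q_V)$ acts transitively on nonzero singular vectors, so $(g_1,g_2,g_3)$ with $g_ia_i=\xi=[V_{\sqrt2E_8}^-]$ does the job simultaneously for both vectors in (\ref{Cond2}) because they share the middle coordinate $a_2$. Your identification of $\mathfrak{V}(\langle(\xi,\xi,0),(0,\xi,\xi)\rangle_{\F_2})$ with $V_M^+$ for $M=(\sqrt2E_8)^{\oplus3}$ is exactly right, and for the ``moreover'' clause the identification of $\mathfrak{V}(\langle(\xi,0,0),(0,\xi,0),(0,0,\xi)\rangle_{\F_2})$ with $V_M$ itself is likewise correct; in that case the conclusion $\mathfrak{V}(\mathcal{S})\cong V_L$ follows cleanly from the fact that all irreducible $V_M$-modules are of the form $V_{\lambda+M}$ (\cite{Do}), forcing the extension to be $V_L$ for an even unimodular $L\supseteq M$.

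The one place where your write-up is a sketch rather than a proof is the step you yourself flag: the assertion that every holomorphic extension of $V_M^+$ is either $V_L$ or $\tilde V_L$. This is not literally a statement in \cite{DGM}; it requires the full classification of irreducible $V_M^+$-modules (untwisted types $V_{\lambda+M}^\pm$ and twisted types $V_M^{T_\chi,\pm}$) together with their fusion rules, as developed in \cite{ADL,Sh2,Sh6}, and then an analysis of which maximal totally singular subspaces of $R(V_M^+)$ can occur. You have correctly located the nontrivial content, but in a self-contained proof you would need to cite those structural results for $V_M^+$ explicitly rather than gesture at \cite{DGM}.
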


\subsection{Conjugacy classes of involutions in the automorphism group of $V_L$}\label{sec:4}
In this subsection, we discuss the conjugacy classes of certain involutions in $\Aut V_L$ when $L$ is the Niemeier lattice
$N(A_{15}D_9)$ or $N(A_7^2D_5^2)$. Throughout this subsection, let $L(\Phi)$ denote the root lattice  of a root system $\Phi$.

First, we summarize a few facts about lattices.
%For a root system $\Phi$, let $R(\Phi)$ denote a root lattice of type $\Phi$.

\begin{lemma}\label{LD52} Let $s$ be a root in $D_5$ and
%Let $\{s_i\mid 1\le i\le 5\}$ be a set of simple roots of $D_5$ and let $s_0$ be the highest root.
let $2\beta+L(D_5)$ be the order $2$ element in $L(D_5)^*/L(D_5)$.
Then $s+4\beta+2L(D_5)$ is conjugate to $s+2L(D_5)$ under the Weyl group of $D_5$.
\end{lemma}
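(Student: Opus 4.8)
The plan is to work in the standard coordinate model $L(D_5)=\{x=(x_1,\dots,x_5)\in\Z^5\mid \sum_i x_i\in 2\Z\}$, in which the roots are the vectors $\pm e_i\pm e_j$ ($i\neq j$) and the Weyl group $W(D_5)$ consists of all permutations of the five coordinates together with all sign changes on an \emph{even} number of coordinates. In this model the discriminant group $L(D_5)^*/L(D_5)$ is cyclic of order $4$, and its unique element of order $2$ is the ``vector class'', represented by any integer vector with odd coordinate sum; thus $2\beta$ may be taken to be such a vector.

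First I would pin down the class of $4\beta$ modulo $2L(D_5)$. Since $e_m-e_{m'}\in L(D_5)$, we have $2e_m\equiv 2e_{m'}\pmod{2L(D_5)}$ for all $m,m'$; moreover $4e_m=2(2e_m)\in 2L(D_5)$. Writing $2\beta=\sum_m a_m e_m$ with $\sum_m a_m$ odd, it follows that $4\beta=\sum_m a_m(2e_m)\equiv\bigl(\sum_m a_m\bigr)(2e_i)\equiv 2e_i\pmod{2L(D_5)}$ for every coordinate index $i$. In particular the class $s+4\beta+2L(D_5)$ does not depend on the choice of representative $\beta$, and it equals $s+2e_i+2L(D_5)$ for whichever $i$ is convenient.

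The core step is then to exhibit an explicit Weyl element. The root $s$ has exactly two nonzero coordinates, say with support $\{i,j\}$; since $5>2$ there is an index $k\notin\{i,j\}$. Let $w\in W(D_5)$ be the sign change negating the coordinates $i$ and $k$ --- an even sign change, hence genuinely in $W(D_5)$. Because the $k$-th coordinate of $s$ vanishes, $w(s)=s-2s_ie_i$, where $s_i=\pm1$ is the $i$-th coordinate of $s$. Comparing with the target,
\[
w(s)-(s+2e_i)=-2(s_i+1)e_i,
\]
which equals $0$ if $s_i=-1$ and equals $-4e_i=2(-2e_i)\in 2L(D_5)$ if $s_i=+1$; in either case it lies in $2L(D_5)$. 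Hence $w(s)\equiv s+2e_i\equiv s+4\beta\pmod{2L(D_5)}$, so $w$ carries $s+2L(D_5)$ onto $s+4\beta+2L(D_5)$, which proves the conjugacy.

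There is no serious obstacle here; the only point requiring care is keeping the chosen sign change \emph{even}, so that it lies in the Weyl group $W(D_5)$ rather than merely in the full orthogonal group $O(D_5)$. This is exactly what forces the pairing of the coordinate $i\in\supp(s)$ with a vanishing coordinate $k$, and it is where the fact that we are in $D_5$ --- so that a root leaves three coordinates free --- is used.
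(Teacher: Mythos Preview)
Your proof is correct and follows the same approach as the paper: work in the standard coordinate model of $D_5$ with $2\beta$ in the vector class $e_1+L(D_5)$, and then verify the claim directly. The paper's own proof simply sets up this model and asserts the result is easy; your argument supplies the explicit Weyl element (an even sign change pairing a support coordinate of $s$ with a vanishing one) that the paper leaves to the reader.
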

\begin{proof} Let $\{e_i\mid 1\le i\le 5\}$ be an orthonormal basis of $\R^5$.
Then $\{\pm(e_i+ e_j),\pm(e_i-e_j)\mid 1\le i<j\le 5\}$ is a root system of type $D_5$, and $2\beta+D_5=e_1+D_5$.
Hence one can easily prove this lemma.
%Hence
%One can easily see that $s_0+4\beta+2D_5=s_1+2D_5$.
%Hence this assertion follows from the transitivity of the Weyl group on the set of roots.
\end{proof}

By \cite[p438, XVII]{CS}, we obtain the following lemma.

\begin{lemma}\label{LNA72} Let $N=N(A_7^2D_5^2)$ and $R=L(A_7^2D_5^2)$.
%Let $\alpha$ and $\beta$ be generators of $L(A_7)^*/L(A_7)$ and $L(D_5)^*/L(D_5)$ respectively.
Let $\tau$ be a diagram automorphism of $L(A_7)$.
%Then the following hold:
\begin{enumerate}[{\rm (1)}]
\item There exist generators $\alpha\in  L(A_7)^*/L(A_7)$ and $\beta\in L(D_5)^*/L(D_5)$ such that $N=\langle s,t,R\rangle_\Z$, where $s=(3\alpha,\alpha,\beta,0)$ and $t=(2\alpha,0,-\beta,\beta)$;
%\item The group $N(A_7^2D_5^2)/L(A_7^2D_5^2)$ is generated by $s=(3\alpha,\alpha,\beta,0)$ and $t=(2\alpha,0,-\beta,\beta)$;
\item The automorphism $(x_1,x_2,x_3,x_4)\mapsto(\tau(x_1),\tau(x_2),x_3,x_4)$ of $R^*$ does not preserve $N$.
\end{enumerate}
\end{lemma}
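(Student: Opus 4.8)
The statement to prove is Lemma \ref{LNA72}, which describes an explicit gluing structure for the Niemeier lattice $N(A_7^2D_5^2)$ and then asserts that a certain ``partial'' diagram automorphism fails to preserve $N$. The plan is to reduce everything to the arithmetic of the glue group $R^*/R$, where $R=L(A_7^2D_5^2)$, using the standard glue-vector data from \cite[p438, XVII]{CS}. For part (1), I would first record the discriminant groups of the individual factors: $L(A_7)^*/L(A_7)\cong\Z_8$ and $L(D_5)^*/L(D_5)\cong\Z_4$, with generators $\alpha$ and $\beta$ respectively (the order-$8$ and order-$4$ glue generators in Conway--Sloane's notation). The glue code for $N(A_7^2D_5^2)$ is a specified self-dual code inside $\Z_8\oplus\Z_8\oplus\Z_4\oplus\Z_4$, and the task is merely to check that the two vectors $s=(3\alpha,\alpha,\beta,0)$ and $t=(2\alpha,0,-\beta,\beta)$ generate this glue code modulo $R$. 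Concretely I would verify that $s$ and $t$ lie in $N$ (equivalently, that their images are in the glue code), that they are $\Z$-independent modulo $R$ of the correct orders, and that together with $R$ they produce a lattice of the right discriminant, namely $|R^*/R|/|N/R|^2 = 1$ so that $N$ is unimodular. This is a finite check against the table in \cite{CS}.

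\textbf{The key step.} Part (2) is the substantive assertion and the main obstacle. Let $g$ denote the candidate automorphism $(x_1,x_2,x_3,x_4)\mapsto(\tau(x_1),\tau(x_2),x_3,x_4)$ of $R^*$, where $\tau$ is the diagram automorphism of $L(A_7)$. The idea is that $g$ certainly preserves $R$ (it permutes roots of each factor), so $g$ preserves $N$ if and only if $g$ fixes the glue code setwise in $R^*/R$, i.e.\ if and only if $g(s),g(t)\in N$. Thus I would compute the action of $\tau$ on the cyclic group $L(A_7)^*/L(A_7)\cong\Z_8$. The diagram automorphism of $A_7$ reverses the Dynkin diagram, and on the discriminant group it acts as multiplication by $-1$, so $\tau(\alpha)=-\alpha$ in $\Z_8$ (this is the crucial computation). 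Granting this, $g(s)=(-3\alpha,-\alpha,\beta,0)$ and $g(t)=(-2\alpha,0,-\beta,\beta)$. The plan is then to test whether $g(s)$ and $g(t)$ lie in the $\Z_2$-span (really $\Z$-span modulo $R$) of $s$ and $t$. One computes, for instance, $g(s)+s=(-2\alpha,0,2\beta,0)$ and compares with integer combinations of $s,t$; showing that $g(s)$ (or $g(t)$) is \emph{not} congruent modulo $R$ to any element of $\langle s,t\rangle$ exhibits the failure.

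\textbf{Carrying it out.} The cleanest way to produce the contradiction is to pick a single glue coordinate where the discrepancy is forced. Because the first two $A_7$-coordinates appear as $(3\alpha,\alpha)$ in $s$ and $(2\alpha,0)$ in $t$, while $\tau$ negates them, I would examine the subgroup generated by the first $A_7$-factor's glue values $\{3,1\}\cdot\alpha$ versus $\{-3,-1\}\cdot\alpha=\{5,7\}\cdot\alpha$ in $\Z_8$ and check that the pair $(g(s),g(t))$ cannot be rewritten in terms of $(s,t)$ subject to the constraint that the $D_5$-coordinates also match. Equivalently, I would set up the two $\Z$-linear equations $g(s)\equiv as+bt$ and $g(t)\equiv cs+dt \pmod R$ componentwise (four congruences in $\Z_8\oplus\Z_8\oplus\Z_4\oplus\Z_4$) and show they have no common integer solution. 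The $D_5$-coordinates (which $g$ fixes) pin down $b,d$ modulo small integers, and then the $A_7$-coordinates force an inconsistency modulo $8$. I expect the main obstacle to be bookkeeping: getting the sign and the precise glue generators from \cite{CS} correct, since an error of a factor of $-1$ or a relabeling of $\alpha$ by a unit in $\Z_8$ would change the answer. Once the negation $\tau(\alpha)=-\alpha$ is pinned down and the single failing congruence is exhibited, the lemma follows, since an automorphism of $R^*$ preserving $R$ but not the glue code $N/R$ cannot preserve $N$.
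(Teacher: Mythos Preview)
Your proposal is correct and follows the same approach as the paper, which simply cites \cite[p438, XVII]{CS} without further detail; you are supplying the omitted verification. The key computation---that the diagram automorphism $\tau$ of $A_7$ acts as $-1$ on $L(A_7)^*/L(A_7)\cong\Z_8$, and the resulting congruence obstruction (e.g.\ $g(s)\equiv as+bt\pmod R$ forces $a\equiv 7\pmod 8$ from the second $A_7$-coordinate but $a\equiv 1\pmod 4$ from the $D_5$-coordinates)---is exactly the intended argument.
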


Next, we recall the following from \cite[Proposition 8.1, Exercise 10 in Chapter 8]{Kac}:

\begin{lemma}\label{LK1} Let $\g$ be a finite dimensional simple Lie algebra and let $g$ and $h$ be automorphisms of $\g$ of order $2$.
Assume that the fixed point subalgebras of $\g$ for $g$ and $h$ are isomorphic.
Then there exists an inner automorphism $x$ of $\g$ such that $xgx^{-1}=h$.
\end{lemma}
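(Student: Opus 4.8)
The plan is to reduce the statement to Kac's classification of finite-order automorphisms (Kac, Proposition~8.1 and the surrounding material). First I would encode each involution by the $\Z/2$-grading it induces: writing $\g=\g_0\oplus\g_1$ for the $(+1)$- and $(-1)$-eigenspaces of $g$, the fixed-point subalgebra is $\g^g=\g_0$, and $g$ is recovered from the grading. Conjugating $g$ by an automorphism $x$ amounts to transporting this grading, so the task is to show that two $\Z/2$-gradings with isomorphic even parts differ by an inner automorphism. By Kac's classification, every order-$2$ automorphism is conjugate, under an inner automorphism, to a standard automorphism attached to a labelling $(s_0,\dots,s_\ell)$ of the nodes of the relevant affine Dynkin diagram (the untwisted diagram of $\g$ in the inner case, a twisted diagram in the outer case), subject to $s_i\ge 0$, $\gcd(s_i)=1$ and $\sum_i a_i s_i=2$, where the $a_i$ are the marks. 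Crucially, the even part $\g_0$ can be read off from the labelling: its semisimple part has Dynkin diagram obtained by deleting the nodes with $s_i\neq0$, and its center has dimension one less than the number of nonzero $s_i$.

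For order $2$ the admissible labellings are completely explicit: either $s_i=1$ on a single node of mark $2$ (so $\g_0$ is semisimple), or $s_i=1$ on two distinct nodes of mark $1$ (so $\g_0$ has a one-dimensional center). This finite list is the backbone of the argument. The inner/outer dichotomy is then forced cleanly by rank: an inner involution fixes a Cartan subalgebra, so $\operatorname{rank}\g^g=\operatorname{rank}\g$, whereas an outer involution has $\operatorname{rank}\g^g<\operatorname{rank}\g$. Since $\g^g\cong\g^h$ have equal rank, $g$ and $h$ are simultaneously inner or simultaneously outer, and we may work with a single affine diagram for both. In particular, in the inner case any conjugation stays inner automatically.

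The heart of the argument is to show that the isomorphism type of $\g_0$ pins down the labelling up to a symmetry of the affine diagram that is realized by an inner automorphism --- namely the rotational symmetries coming from the center (translations by coweights), as opposed to the reflections coming from genuine diagram automorphisms. Concretely, I would compare the two deleted-node configurations, match up their semisimple subdiagrams and central dimensions, and verify case by case that isomorphic $\g_0$ forces the two node-configurations into a single orbit under these inner-realized rotations. Once that is done, the two Kac normal forms literally coincide, and composing the inner conjugations that brought $g$ and $h$ into normal form produces the desired inner $x$ with $x g x^{-1}=h$. The main obstacle is precisely this last verification: a subdiagram can in principle be produced by deleting inequivalent nodes, so abstract isomorphism of $\g_0$ is not obviously enough, and one must invoke the finer data of the affine diagram (the marks, the position of the nodes, and the central contribution) to exclude spurious coincidences. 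This is exactly the combinatorial content packaged in Kac's Exercise~10 in Chapter~8, on which I would ultimately rely to complete the matching.
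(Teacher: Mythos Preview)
The paper does not prove this lemma; it merely cites \cite[Proposition~8.1, Exercise~10 in Chapter~8]{Kac}. Your proposal is precisely an unpacking of that citation, so in spirit the approaches coincide.

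There is, however, a genuine gap in the passage from ``$g$ and $h$ are simultaneously inner or simultaneously outer'' to ``we may work with a single affine diagram for both.'' The rank of $\g^g$ only detects whether the image of $g$ in $\Aut\g/\Inn\g$ is trivial; it does not identify \emph{which} nontrivial element it is. When $\g$ is of type $D_4$ one has $\Aut\g/\Inn\g\cong\Sym_3$, and the three transposition diagram automorphisms yield three involutions, each with fixed subalgebra of type $B_3$, lying in three distinct cosets of $\Inn\g$. Since inner conjugation cannot change the image in $\Aut\g/\Inn\g$, no inner $x$ carries one of these to another. Thus the lemma as stated is actually false for $\g\cong\mathfrak{so}_8$, and your proposed case-by-case verification would break exactly here: the two outer normal forms live over different choices of the order-$2$ diagram automorphism, and the ``inner-realized rotations'' of a single twisted affine diagram cannot relate them.

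This does not harm the paper's applications: the lemma is invoked only for simple factors of types $A_7$, $A_{15}$, $D_5$, $D_9$, where $\Aut\g/\Inn\g$ has order at most $2$, so ``outer'' names a single coset and your rank argument suffices. If you want a correct general statement, either exclude $D_4$, or weaken the conclusion to conjugacy in $\Aut\g$ (which is what Kac's classification actually delivers) and then check separately, in each application, that the conjugating automorphism can be taken inner.
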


The next two lemmas follow from explicit calculations based on \cite[Chapter 8]{Kac}.
For a root system $\Phi$, let $\g(\Phi)$ denote the semi-simple Lie algebra of type $\Phi$.

\begin{lemma}\label{LD5}
\begin{enumerate}[{\rm (1)}]
\item Let $s$ be a root in $D_5$ and let $f=\exp ({\rm ad}(\pi\sqrt{-1}s))$, where we view $s$ as a vector in the Cartan subalgebra.
%Let $\{s_i\mid 1\le i\le 5\}$ be a set of simple roots of $D_5$ and let $s_0$ be the highest root.
%If an involution $f$ in $\Aut(\g(D_5))$ satisfies $\g(D_5)^f\cong\g(A_3A_1^2)$ then it is conjugate to $\exp ({\rm ad}(\pi\sqrt{-1}s))$ by an inner automorphism of $\g(D_5)$.
Then $\g(D_5)^f\cong\g(A_3A_1^2)$.
\item Let $g\in\Aut \g(D_5)$ be an involution which is a lift of the $-1$-isometry of $D_5$.
Then $\g(D_5)^g\cong\g(B_2^2)$.
\end{enumerate}
\end{lemma}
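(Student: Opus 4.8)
The plan is to treat the two parts separately, in each case reducing to an explicit analysis of how the involution acts on the root-space decomposition $\g(D_5)=\h\oplus\bigoplus_{\al}\g_\al$, where $\h$ is a fixed Cartan subalgebra and $\al$ ranges over the roots $\{\pm e_i\pm e_j\mid 1\le i<j\le 5\}$ of $D_5$.

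For part (1), I would first observe that $f=\exp(\ad(\pii s))$ is inner and fixes $\h$ pointwise, since $\ad(s)$ annihilates $\h$; on a root space it acts by the scalar $\exp(\pii\la\al,s\ra)=(-1)^{\la\al,s\ra}$, so
\[
\g(D_5)^f=\h\oplus\bigoplus_{\la\al,s\ra\in2\Z}\g_\al .
\]
All roots of $D_5$ are conjugate under the Weyl group, and Weyl conjugation preserves the isomorphism type of the fixed subalgebra, so I may take $s=e_1+e_2$. A direct parity check then shows that the fixed roots are precisely $\{\pm e_1\pm e_2\}$ together with $\{\pm e_i\pm e_j\mid 3\le i<j\le5\}$; the former is a $D_2=A_1^2$ system and the latter a $D_3=A_3$ system, mutually orthogonal. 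These span a rank-$5$ root lattice, so together with $\h$ they exhaust a full-rank semisimple subalgebra, giving $\g(D_5)^f\cong\g(A_3A_1^2)$.

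For part (2), the lift $g$ acts as $-1$ on $\h$ and interchanges $\g_\al$ with $\g_{-\al}$; hence $\h^g=0$ and each pair $\{\g_\al,\g_{-\al}\}$ contributes a one-dimensional fixed space, so $\dim\g(D_5)^g=20=\dim\g(B_2^2)$. Since $-1\notin W(D_5)$ (for $D_n$ with $n$ odd the longest Weyl element is $w_0=-\sigma$, where $\sigma$ is the nontrivial diagram automorphism, whence $-1=w_0\sigma$ is outer), $g$ is an \emph{outer} involution, and by Kac's classification its fixed subalgebra is one of $B_4$ or $B_p\oplus B_q$ with $p+q=4$. To pin down which, I would pass to the explicit orthogonal model: realize $\g(D_5)=\mathfrak{so}_{10}$ on a hyperbolic space with basis $v_{\pm1},\dots,v_{\pm5}$ of weights $\pm e_i$, in which the standard lift of $-1$ is conjugation by the orthogonal involution $P$ exchanging $v_i\leftrightarrow v_{-i}$ for all $i$. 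One checks $\det P=(-1)^5=-1$ and $PH_iP^{-1}=-H_i$, confirming that $P$ induces $-1$ on $\h$; since the $\pm1$-eigenspaces of $P$ are each $5$-dimensional and nondegenerate, the fixed subalgebra is the centralizer $\mathfrak{so}_5\oplus\mathfrak{so}_5\cong\g(B_2^2)$.

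The main obstacle is part (2): unlike part (1), the fixed subalgebra depends on which involutory lift of $-1$ one takes, since the various lifts sweep out all the symmetric pairs $(D_5,B_p\oplus B_q)$, and the content of the lemma is that the \emph{specific standard lift} relevant here lands in the balanced case $B_2^2$ rather than $B_4$ or $B_3\oplus B_1$. I would settle this either through the explicit computation above or, equivalently, by writing down the Kac coordinates of $g$ on the twisted affine diagram of type $D_5^{(2)}$ and deleting the marked node. In either route, Lemma \ref{LK1} then guarantees that any two involutions with fixed subalgebra $\cong\g(B_2^2)$ are conjugate, so the statement is unambiguous once the isomorphism type has been identified.
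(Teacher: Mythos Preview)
Your approach—explicit root-space calculation—is precisely what the paper does; indeed the paper offers no argument beyond the remark that both parts follow from explicit computations based on \cite[Chapter 8]{Kac}. Part (1) is handled correctly.

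For part (2), however, the ``main obstacle'' you flag does not exist, and your claim that ``the various lifts sweep out all the symmetric pairs $(D_5,B_p\oplus B_q)$'' is false. You already computed that $\h^g=0$ and that each pair $\{\g_\al,\g_{-\al}\}$ contributes a one-dimensional fixed line, whence $\dim\g(D_5)^g=|\Phi^+(D_5)|=20$. But the three conjugacy classes of outer involutions of $\g(D_5)$ have fixed subalgebras of dimensions $\dim\g(B_4)=36$, $\dim\g(B_3B_1)=24$, and $\dim\g(B_2^2)=20$, so the dimension alone already forces $\g(D_5)^g\cong\g(B_2^2)$. In other words, \emph{every} involution acting as $-1$ on a Cartan lies in the $B_2^2$ class; the classes $B_4$ and $B_3B_1$ cannot arise as lifts of $-1$ at all. (Concretely: if conjugation by $P\in O_{10}$ induces $-1$ on the diagonal Cartan, then $P$ must interchange each weight line $\C v_i$ with $\C v_{-i}$, forcing both eigenspaces of $P$ on $\C^{10}$ to be $5$-dimensional—exactly as in your model computation.) So your explicit $\mathfrak{so}_{10}$ argument is correct but redundant, and the lemma is unambiguous as stated: your own dimension count two sentences earlier already finishes the proof.
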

%\begin{proof}
%\end{proof}

\begin{lemma}\label{Lfixed} Let $\g_1\cong\g_2\cong\g(A_7)$ and $\g_3\cong\g_4\cong\g(D_5)$ and set $\g=\oplus_{i=1}^4\g_i$.
Let $f$ be an involution in $\Aut \g$ such that $\g^f\cong\g(A_7A_3B_2^2A_1^2)$.
Then the following hold:
\begin{enumerate}[{\rm (1)}]
\item $f(\g_1)=\g_2$, and $f(\g_i)=\g_i$ for $i=3,4$.
\item As sets of isomorphism classes, $\{\g_3^f,\g_4^f\}=\{\g(B_2^2),\g(A_3A_1^2)\}$.
\end{enumerate}
\end{lemma}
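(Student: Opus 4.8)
The plan is to reduce the statement to a combinatorial matching of root-system types, exploiting the fact that any automorphism of a semisimple Lie algebra permutes its minimal (simple) ideals while preserving isomorphism type. First I would record the general principle: since $\g_1,\g_2,\g_3,\g_4$ are precisely the simple ideals of $\g$, the involution $f$ induces an order-two, type-preserving permutation of them. Because $\g_1\cong\g_2\cong\g(A_7)$ are the only ideals of type $A_7$ and $\g_3\cong\g_4\cong\g(D_5)$ the only ones of type $D_5$, $f$ can only swap $\g_1\leftrightarrow\g_2$ or fix each of $\g_1,\g_2$, and similarly for $\g_3,\g_4$. In particular $f$ stabilizes the block $\g_1\oplus\g_2$ and the block $\g_3\oplus\g_4$, so that $\g^f=(\g_1\oplus\g_2)^f\oplus(\g_3\oplus\g_4)^f$. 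I would then use two elementary observations: if $f$ interchanges an isomorphic pair of ideals of type $X$, the fixed points form a diagonal subalgebra isomorphic to $\g(X)$; and if $f$ fixes an ideal, the corresponding fixed points are the fixed-point subalgebra of the restricted involution.

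Next I would pin down the admissible fixed-point types, using the classification of involutions of simple Lie algebras in \cite[Chapter 8]{Kac} together with Lemma \ref{LD5}. Since $\g^f\cong\g(A_7A_3B_2^2A_1^2)$ is semisimple and $\g^f$ is a direct sum of the block contributions, \emph{every} contribution must itself be semisimple, so any involution whose fixed-point subalgebra carries a nontrivial central torus is excluded. For $\g(A_7)=\mathfrak{sl}_8$ every inner involution has fixed points $\mathfrak{s}(\mathfrak{gl}_k\oplus\mathfrak{gl}_{8-k})$ with a one-dimensional center, hence non-semisimple; thus the only proper semisimple fixed-point subalgebras are the outer ones $\g(C_4)$ and $\g(D_4)$, while the trivial involution contributes $\g(A_7)$. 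For $\g(D_5)$, Lemma \ref{LD5} supplies the two relevant cases $\g(A_3A_1^2)$ (inner) and $\g(B_2^2)$ (the outer $-1$-lift), and the only remaining semisimple possibilities are $\g(A_1B_3)$ and $\g(B_4)$; the trivial involution contributes $\g(D_5)$.

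The core is then a short matching argument organised by the four cases (swap or fix the $A_7$-block) $\times$ (swap or fix the $D_5$-block). Swapping $\g_3\leftrightarrow\g_4$, or fixing some $\g_i$ trivially, produces a $\g(D_5)$ factor, and fixing $\g_i$ with $\g_i^f$ of type $B_3$, $B_4$, or $D_4$ produces a $B_3$, $B_4$, or $D_4$ factor; none of these appears in $\g(A_7A_3B_2^2A_1^2)$. This forces $f$ to fix $\g_3$ and $\g_4$ nontrivially, and the only pair of semisimple fixed-point types summing to $\g(A_3B_2^2A_1^2)$ is $\{\g(A_3A_1^2),\g(B_2^2)\}$, which is part (2). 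For the $A_7$-block I observe that the $A_7$ factor of $\g^f$ cannot come from the $D_5$-block, while fixing $\g_1,\g_2$ yields only $\g(A_7)$, $\g(C_4)$, or $\g(D_4)$ summands; the latter two are forbidden, and retaining two copies of $\g(A_7)$ contradicts the single $A_7$ factor in the target. Hence the $A_7$ factor must arise as the diagonal of a swap $\g_1\leftrightarrow\g_2$, giving $f(\g_1)=\g_2$ and $f(\g_i)=\g_i$ for $i=3,4$, which is part (1).

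The hard part will be ensuring the case analysis is genuinely exhaustive and that the semisimplicity bookkeeping is airtight: specifically, verifying that the inner involutions of $\g(A_7)$ and of $\g(D_5)$ omitted from the list above are exactly those whose centralizer contains a nonzero central torus, so that they cannot contribute to the semisimple algebra $\g^f$. Once the admissible semisimple fixed-point types are fixed via \cite[Chapter 8]{Kac} and Lemma \ref{LD5}, the remaining matching is forced purely by the absence of the simple factors $A_4,A_5,A_6,B_3,B_4,C_4,D_4,D_5$ from $\g(A_7A_3B_2^2A_1^2)$.
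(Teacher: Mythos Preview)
Your proposal is correct and is precisely the explicit calculation the paper leaves to the reader: the authors simply state that the lemma ``follows from explicit calculations based on \cite[Chapter 8]{Kac},'' and your argument is exactly such a calculation, using the Cartan/Kac classification of involutions of $\g(A_7)$ and $\g(D_5)$ together with the elementary permutation of simple ideals. The case analysis is complete and the semisimplicity bookkeeping (ruling out $C_4,D_4,B_3,B_4,D_5$ factors and the non-semisimple inner fixed points) is sound.
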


In the following,  we will show that the conjugacy classes of some involutions in $V_L$ are uniquely determined by the isomorphism class of the fixed point Lie subalgebra of $(V_L)_1$ for $L\cong N(A_{15}D_9)$ and $N(A_7^2D_5^2)$.
For a Lie algebra $\g$, let $\Inn \g$ denote the inner automorphism group of $\g$.
Since $\Inn (V_L)_1$ can be extended to an automorphism group of $V_L$, we view it as a subgroup of $\Aut V$.

\begin{theorem}\label{TA15D9} There exists exactly one conjugacy class of involutions $g$ in $\Aut V_{N(A_{15}D_9)}$ such that the fixed point Lie subalgebra $(V_{N(A_{15}D_9)}^g)_1$ is isomorphic to $\g(C_8B_4^2)$.
\end{theorem}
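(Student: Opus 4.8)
The plan is to study how $g$ acts on the weight one Lie algebra $\g:=(V_N)_1\cong\g(A_{15}D_9)$, where $N=N(A_{15}D_9)$ is unimodular of rank $24$ and $R=L(A_{15}D_9)$ is its root sublattice, to pin down $g|_\g$ up to inner automorphisms via Kac's Lemma \ref{LK1}, and then to control the ambiguity in lifting the resulting involution of $\g$ back to $\Aut V_N$. First I would observe that the two simple ideals $\g(A_{15})$ and $\g(D_9)$ have different types, so $g$ preserves each and $\g^g=(V_N^g)_1=\g(A_{15})^g\oplus\g(D_9)^g$. Matching this with $\g(C_8B_4^2)=\g(C_8)\oplus\g(B_4)\oplus\g(B_4)$ and running through the list of fixed-point subalgebras of involutions of simple Lie algebras forces $\g(A_{15})^g\cong\g(C_8)$ and $\g(D_9)^g\cong\g(B_4^2)$: no fixed subalgebra of $\mathfrak{sl}_{16}$ contains a $B_4$ summand, so the $C_8$ must come from $\g(A_{15})$ and both $B_4$'s from $\g(D_9)$. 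Since $\mathrm{rank}\,\g(C_8)=8<15$ and $\mathrm{rank}\,\g(B_4^2)=8<9$, both restrictions are \emph{outer} involutions (inner involutions preserve a Cartan subalgebra, hence the rank): on $\mathfrak{sl}_{16}$ this is the type-AII involution with fixed algebra $\mathfrak{sp}_8=\g(C_8)$, and on $\mathfrak{so}_{18}$ the outer involution with fixed algebra $\mathfrak{so}_9\oplus\mathfrak{so}_9=\g(B_4^2)$.

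Next, applying Lemma \ref{LK1} to each simple factor shows that $g|_{\g(A_{15})}$ and $g|_{\g(D_9)}$ are each determined up to an inner automorphism of the respective factor by the isomorphism type of their fixed subalgebras; hence $g|_\g$ is determined up to $\Inn\g=\Inn(V_N)_1$. Because every element of $\Inn\g$ is realized by an inner automorphism $\exp(a_{(0)})$, $a\in(V_N)_1$, of $V_N$, I may conjugate $g$ in $\Aut V_N$ so that $g|_\g$ equals a fixed standard involution. Concretely this is the restriction to $\g$ of $\sigma_h\hat\phi$, where $\hat\phi$ is an involutive lift of the order-two isometry $\phi$ of $N$ inducing the diagram automorphisms on both root subsystems, and $\sigma_h=\exp(2\pi\sqrt{-1}\,h_{(0)})$ is the torus correction (supported on the $D_9$-part) needed to turn the fixed algebra $\g(B_8)$ of the plain diagram automorphism into $\g(B_4^2)$.

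The remaining point is that $\g$ only generates the subVOA $V_R$, so $\Aut V_N\to\Aut\g$ is not injective: its kernel is $K=\{\sigma_h\mid h\in R^*\}\cong R^*/N$, which is cyclic of order $8$ (as $N$ is unimodular, $|R^*/R|=16\cdot4=64$ while $|N/R|=8$). Thus any involution with the prescribed restriction to $\g$ has the form $\kappa\,\sigma_h\hat\phi$ with $\kappa\in K$, and since each $\kappa$ acts trivially on $\g$ all of these share the fixed algebra $\g(C_8B_4^2)$; moreover $\phi\equiv-1$ on $R^*/R$ gives $(1+\phi)\kappa\in R\subset N$, so each of these $8$ lifts is genuinely an involution. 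Hence the candidates form an $R^*/N$-torsor. For existence it suffices to note that $\phi$, acting as $-1$ on the discriminant group $R^*/R=\Z_{16}\oplus\Z_4$, fixes every glue subgroup and so preserves $N$, together with a valid choice of $h$. For uniqueness I would show the torsor collapses to a single $\Aut V_N$-conjugacy class: conjugating $\sigma_h\hat\phi$ by a torus element $\sigma_k$ replaces $h$ by $h+(1-\phi)k$, while conjugating by isometry lifts $\hat\psi$ commuting with $\phi$ permutes the classes through their induced action on $R^*/N$.

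The hard part will be this last step, a finite but delicate computation in the discriminant group of $N$ under $\phi$ showing that the $8$ involutive lifts are all identified. This is exactly the type of Weyl-conjugacy statement isolated in Lemma \ref{LD52}, and it is where I expect the real work to lie, since a priori a nontrivial element of $K$ could produce a non-conjugate involution that would have to be excluded.
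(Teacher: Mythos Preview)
Your approach is essentially the paper's: reduce to the action on $\g=(V_N)_1$ via Lemma~\ref{LK1}, then handle the kernel of $\Aut V_N\to\Aut\g$. Your identification of that kernel with the cyclic group $R^*/N$ of order $8$ (equivalently $2R^*/2N$, generated by $(2\alpha,0)$) is exactly what the paper uses, and your verification that all eight lifts are involutions is correct.

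Where you diverge is in estimating the difficulty of the final step. You flag it as ``the hard part'' and point to Lemma~\ref{LD52}, but in fact that lemma plays no role here --- it is used only in the proof of Theorem~\ref{TA7D5}, where the situation is genuinely more delicate. For Theorem~\ref{TA15D9} the paper dispatches the torsor in one line by a square-root trick: if the defect is $k=\exp(\pi\sqrt{-1}\,v_{(0)})$ with $v\in 2R^*/2N$, then because $g$ acts as $-1$ on $R^*/R$ (precisely your observation $\phi\equiv-1$) one has $g\,k^{1/2}g^{-1}=k^{-1/2}$, whence conjugation by $k^{1/2}=\exp(\pi\sqrt{-1}\,(v/2)_{(0)})\in\Aut V_N$ sends $kg$ back to $g$. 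This is exactly your proposed move ``conjugating by $\sigma_k$ replaces $h$ by $h+(1-\phi)k$'' once you allow $k$ to be taken in $R^*$ rather than in $R^*/N$; the point is that $(1-\phi)$ is \emph{surjective} from $R^*/N$ onto the kernel, since it acts as multiplication by $2$ modulo $R$ and one can halve inside the torus. So no delicate finite computation is needed here --- the ambiguity is killed uniformly by conjugation in the maximal torus.

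A minor comment: the paper does not bother to argue that the $C_8$ comes from $\g(A_{15})$ and the $B_4^2$ from $\g(D_9)$, nor that both restrictions are outer; it simply invokes Lemma~\ref{LK1} on each simple factor (which only requires the fixed subalgebras to match, and is insensitive to inner versus outer). Your more detailed analysis is correct but not needed for the argument.
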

\begin{proof} Set $V=V_{N(A_{15}D_9)}$ and $\g=V_1$.
Let $g$ and $h$ be involutions in $\Aut V$ satisfying the assumption.
Since Cartan subalgebras of arbitrary Lie algebra are conjugate under inner automorphisms and any automorphism of finite order preserves a Cartan subalgebra (\cite[Lemma 8.1]{Kac}), we may assume that both $g$ and $h$ preserve the Cartan subalgebra $\C\otimes_{\Z}L(A_{15}D_9)$ of $\g$.
It follows from $\g\cong\g(A_{15})\oplus\g(D_9)$ that both $g$ and $h$ preserve each ideal.
By Lemma \ref{LK1}, there exists $x\in\Inn \g\subset\Aut V$ such that $g=xhx^{-1}$ on $\g$.

Set $k=xhx^{-1}g^{-1}$.
Then $k$ is trivial on $\g$.
Set $N=N(A_{15}D_9)$ and $R=L(A_{15}D_9)$.
Since $V_{R}$ is generated by $\g$ as a VOA, $k$ is also trivial on $V_{R}$.
By Schur's lemma $k$ acts on each irreducible $V_R$-submodule $V_{\lambda+R}$ of $V$ by a scalar.
Hence there exists $v\in 2R^*/2N$ such that $k=\exp(\pi\sqrt{-1}v_{(0)})$.
By \cite[p439, XIX]{CS}, we may assume that $N/R$ is generated by $(2\alpha,\beta)$, where $L(A_{15})^*/L(A_{15})=\langle \alpha\rangle$ and $L(D_9)^*/L(D_9)=\langle\beta\rangle$.
Then the group $2R^*/2N$ is generated by $(2\alpha,0)$.
We now consider the action of $g$ on $R^*\subset\C\otimes_{\Z}R$.
It follows from $\g^g\cong\g(C_8B_4^2)$ that ${g}(\alpha)=-\alpha$ and ${g}(\beta)=-\beta$ (cf.\ \cite[Proposition 8]{Kac}).
Hence ${g}(v)=-v$, and $gk^{1/2}g^{-1}=k^{-1/2}$, where $k^{1/2}=\exp(\pi\sqrt{-1}v_{(0)}/2)\in\Aut V$.
Thus we obtain $$k^{-1/2}xhx^{-1}k^{1/2}=k^{-1/2}kgk^{1/2}=g,$$
which proves the theorem.
\end{proof}

\begin{theorem}\label{TA7D5} There exists exactly one conjugacy class of involutions $g$ in $\Aut V_{N(A_7^2D_5^2)}$ such that the fixed point Lie subalgebra $(V_{N(A_7^2D_5^2)}^g)_1$ is isomorphic to $\g(A_7A_3B_2^2A_1^2)$.
\end{theorem}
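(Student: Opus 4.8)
The plan is to mirror the proof of Theorem \ref{TA15D9}, adapting it to the fact that the root system $A_7^2D_5^2$ has repeated simple factors, so the involutions may permute the isomorphic ideals. Write $V=V_{N(A_7^2D_5^2)}$, $\g=V_1\cong\g_1\oplus\g_2\oplus\g_3\oplus\g_4$ with $\g_1\cong\g_2\cong\g(A_7)$ and $\g_3\cong\g_4\cong\g(D_5)$, and set $N=N(A_7^2D_5^2)$, $R=L(A_7^2D_5^2)$. Let $g,h\in\Aut V$ be involutions with $\g^g\cong\g^h\cong\g(A_7A_3B_2^2A_1^2)$. As in Theorem \ref{TA15D9}, since any finite-order automorphism preserves some Cartan subalgebra and Cartan subalgebras are conjugate under inner automorphisms, I may assume $g$ and $h$ both preserve $\h=\C\otimes_\Z R$. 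First I would read off the combinatorial type of $g$ and $h$ from Lemma \ref{Lfixed}: each exchanges $\g_1\leftrightarrow\g_2$ and preserves each of $\g_3,\g_4$, with exactly one of the two $D_5$-ideals contributing $\g(B_2^2)$ and the other $\g(A_3A_1^2)$. By Lemma \ref{LD5}, on the $B_2^2$-ideal the involution is a lift of the $-1$-isometry, acting as $-1$ on that $D_5$-Cartan, while on the $A_3A_1^2$-ideal it is conjugate to $\exp(\pi\sqrt{-1}s_{(0)})$ for a root $s$, acting trivially on that $D_5$-Cartan.

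Next I would conjugate $h$ into the same shape as $g$ by some $x\in\Aut V$. If $g$ and $h$ assign the $B_2^2$- and $A_3A_1^2$-types to opposite $D_5$-ideals, I first apply a lift of a lattice automorphism of $N$ exchanging $\g_3$ and $\g_4$, whose existence I would verify directly from the explicit glue generators $s=(3\alpha,\alpha,\beta,0)$, $t=(2\alpha,0,-\beta,\beta)$ of Lemma \ref{LNA72}(1). With the types aligned, Lemma \ref{LK1} applied to each simple factor, together with the fixed-point descriptions of Lemma \ref{LD5}, furnishes inner automorphisms matching $g$ and $h$ on each ideal; the only remaining ambiguity is a possible diagram automorphism of $A_7$ in the gluing isomorphism $\g_1\to\g_2$, and here Lemma \ref{LNA72}(2)—that the simultaneous diagram automorphism of the two $A_7$'s does not preserve $N$—is what resolves the matching. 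The outcome is an $x\in\Aut V$ with $xhx^{-1}=g$ on $\g=V_1$.

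I would then run the endgame exactly as in Theorem \ref{TA15D9}. Put $k=xhx^{-1}g^{-1}$; by construction $k$ is trivial on $V_1$, hence on the subVOA $V_R$ it generates, so by Schur's lemma $k=\exp(\pi\sqrt{-1}v_{(0)})$ for some $v\in 2R^*/2N$. The crucial point is that $g(v)\equiv-v\pmod{2N}$: since $g$ acts by $-1$ on the $B_2^2$-type $D_5$-Cartan and exchanges the two $A_7$-coordinates, it negates $v$ there, and that $v$ has no surviving component along the $A_3A_1^2$-type $D_5$—where $g$ acts trivially—I would check from the glue relations of Lemma \ref{LNA72}(1). Granting this, the square root $k^{1/2}=\exp(\pi\sqrt{-1}(v/2)_{(0)})\in\Aut V$ satisfies $gk^{1/2}g^{-1}=k^{-1/2}$, whence
\[
k^{-1/2}xhx^{-1}k^{1/2}=k^{-1/2}kgk^{1/2}=g,
\]
so $g$ and $h$ are conjugate in $\Aut V$.

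The main obstacle is the middle step. Unlike Theorem \ref{TA15D9}, where the ideals were pairwise non-isomorphic and $g,h$ automatically preserved each one, here I must control the permutation of the isomorphic factors and, above all, the diagram-automorphism ambiguity on the swapped $A_7$-pair together with the trivial action on one $D_5$. This is precisely what the explicit lattice data of Lemma \ref{LNA72} supplies: the exchanging lattice automorphism on one hand and, via part (2), the elimination of the outer ambiguity on the other. Verifying $g(v)\equiv-v\pmod{2N}$ against this background—particularly the vanishing of the $A_3A_1^2$-type $D_5$-component—is the delicate computation I expect to require the most care.
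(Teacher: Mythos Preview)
Your reduction to an element $k=\exp(\pi\sqrt{-1}w_{(0)})$ with $w\in 2R^*/2N$ acting trivially on $\g$ follows the paper and is fine; the endgame, however, has a genuine gap. The claim $g(w)\equiv -w\pmod{2N}$ is false in general, and neither of your two justifications survives. First, on the $A_3A_1^2$-type $D_5$ factor $g$ acts \emph{trivially} on the Cartan, so it \emph{fixes} (not negates) that component of $w$; and there is no reason for that component to vanish: with the glue vectors of Lemma \ref{LNA72}(1) one finds that $2R^*/2N$ is generated by $u=(0,2\alpha,0,0)$ of order $8$ and $v=(0,0,0,2\beta)$ of order $4$, so the generic $w=nu+mv$ has a nonzero fourth-coordinate part. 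Second, swapping the two $A_7$ coordinates is not $-1$ on the quotient: the paper computes $g(u)=3u-v$ and $g(v)=v$, so the simple square-root identity $gk^{1/2}g^{-1}=k^{-1/2}$ from Theorem \ref{TA15D9} does not hold here.

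What actually works is more delicate. From $(kg)^2=\mathrm{Id}$ one gets $n\in 2\Z$; conjugating by $\exp(-\tfrac{n}{2}\pi\sqrt{-1}u_{(0)})$ (using $g(u)=3u-v$) kills the $u$-part and replaces $k$ by $\exp(\pi\sqrt{-1}(m+\tfrac{n}{2})v_{(0)})$. Since $g(v)=v$, the order-$2$ condition now forces the remaining exponent to be even, so after relabelling one is left with $k=\exp(2\pi\sqrt{-1}\beta_{(0)})$ on the fourth $D_5$. This residual case cannot be removed by any square-root trick (because $g$ fixes $v$); instead one uses that on $\g_4$ the involution $g$ is conjugate to $\exp(\pi\sqrt{-1}s_{(0)})$ for a root $s$ (Lemma \ref{LD5}) and then invokes Lemma \ref{LD52}, which says $s+4\beta$ and $s$ are Weyl-conjugate in $D_5$, to conjugate $kg$ to $g$. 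Your outline omits this last step and the lemma that powers it.
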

\begin{proof}
Set $V=V_{N(A_7^2D_5^2)}$ and $\g=V_1$.
Then $\g\cong\g(A_7^2D_5^2)$, and let $\g_1,\g_2,\g_3,\g_4$ be ideals of $\g$ such that $$ \g=\bigoplus_{i=1}^4\g_i,\quad \g_1\cong\g_2\cong\g(A_7),\quad \g_3\cong\g_4\cong\g(D_5).$$

Let $g$ and $h$ be involutions in $\Aut V$ satisfying the assumption.
Since Cartan subalgebras of arbitrary Lie algebra are conjugate under inner automorphisms and any automorphism of finite order preserves a Cartan subalgebra (\cite[Lemma 8.1]{Kac}), we may assume that $g$ and $h$ preserve the Cartan subalgebra $\C\otimes_{\Z}L(A_{7}^2D_5^2)$ of $\g$.
By Lemma \ref{Lfixed}, we may assume that
$$g(\g_1)= h(\g_1)=\g_2,\quad \g_3^{g}\cong\g_3^{h}\cong\g(B_2^2),\quad \g_4^{g}\cong\g_4^{h}\cong\g(A_3A_1^2).$$
By Lemma \ref{LK1}, there exists $x\in\Inn(\g_3\oplus\g_4)\subset\Aut V$ such that $xhx^{-1}=g$ on $\g_3\oplus\g_4$.

Set $h'=xhx^{-1}$.
Let us consider the actions of $g$ and $h'$ on $\g_1\oplus \g_2$.
By Lemma \ref{Lfixed} (1), $h'g^{-1}$ preserves both $\g_1$ and $\g_2$.
Set $a_i=(h'g^{-1})_{|\g_i}$ for $i=1,2$.
%and $b=(h'g^{-1})_{|\g_2}$.
Then $h'=a_1a_2g$ on $\g_1\oplus\g_2$.
Since the order of $h'$ is $2$, we have %so are the orders of $a_i$, and
$a_2=ga_1^{-1}g^{-1}$. Hence $h'=a_1ga_1^{-1}$ and $a_1^{-1}h'a_1=g$ on $\g_1\oplus\g_2$.
Suppose that $a_1$ is not inner.
Then there exists $c\in\Inn \g_1$ such that $c^{-1}a_1$ acts on the Cartan subalgebra $\C\otimes_{\Z}L(A_7)$ of $\g_1$ as a diagram automorphism.
Hence $(c^{-1}a_1)((c^{-1}a_1)^{-1})^{g}=c^{-1}h'cg^{-1}$ acts on the Cartan subalgebra $\C\otimes_{\Z}L(A_7^2D_5^2)$ of $\g$ as $(x_1,x_2,x_3,x_4)\mapsto(\tau(x_1),\tau(x_2),x_3,x_4)$.
Since $c^{-1}h'cg^{-1}\in\Aut V$, its restriction on $\C\otimes_{\Z}L(A_{7}^2D_5^2)$ preserves $N(A_{7}^2D_5^2)$, which contradicts Lemma \ref{LNA72} (2).
Thus $a_1$ is inner, and it can be extended to $a\in\Aut V$.
Note that $ah'a^{-1}=g$ on $\g_1\oplus\g_2$.
Since $x$ is trivial on $\g_1\oplus\g_2$ and $a$ is trivial on $\g_3\oplus\g_4$, we have $(ax)h(ax)^{-1}=g$ on $\g$.

Set $h''=(ax)h(ax)^{-1}$ and $k=h''g^{-1}$.
Set $N=N(A_7^2D_5^2)$ and $R=L(A_7^2D_5^2)$.
Then $k$ is trivial on $\g$, and so is on $V_{R}$.
By Schur's lemma $k$ acts on each irreducible $V_{R}$-submodule $V_{\lambda+R}$ of $V$ by a scalar.
Hence $k=\exp(\pi\sqrt{-1}v_{0})$ for some $v\in 2R^*/2N$.
Let $\alpha\in L(A_7)^*/L(A_7)$ and $\beta\in L(D_5)^*/L(D_5)$ given in Lemma \ref{LNA72} (1).
Then $2R^*/2N$ is generated by $u$ and $v$, where
$u=(0,2\alpha,0,0)$, $v=(0,0,0,2\beta)$.
Note that the orders of $u$ and $v$ are $8$ and $4$ in $2R^*/2N$, respectively.
We now consider the action of $g$ on $R^*\subset\C\otimes_{\Z}R$.
It follows from $\g^g=\g(A_7A_3B_2^2A_1^2)$ that ${g}(v_1,v_2,v_3,v_4)=(v_2,v_1,-v_3,v_4)$ (cf.\ \cite[Proposition 8.1]{Kac}).
Hence
\begin{equation}
{g}(u)=3u-v,\quad {g}(v)=v.\label{Eq:g}
\end{equation}

Let $n,m\in\Z$ such that $k=\exp(\pi\sqrt{-1}(nu+mv)_{(0)})$.
Since $h''=kg$ is of order $2$, we have
$$(kg)^2=kgkg=\exp(\pi\sqrt{-1}(4nu+(-n+2m)v)_{(0)})={\rm Id}.$$
Hence $n\in2\Z$.
By (\ref{Eq:g}) and $h''=kg$, we have $$\exp(\pi\sqrt{-1}(-nu_{(0)}/2))^{-1} h''\exp(\pi\sqrt{-1}(-nu_{(0)}/2))=\exp(\pi\sqrt{-1}((m+\frac{n}{2})v_{(0)})g.$$
Hence we may assume that $n=0$ and $k=\exp(\pi\sqrt{-1}mv_{(0)})$.

In order to complete the proof, it suffices to show that the involutions $\exp(\pi\sqrt{-1}mv_{(0)})g$ and $g$ are conjugate.
Since the order of $\exp(\pi\sqrt{-1}mv_{(0)})g$ is $2$, we have $m\equiv0\pmod 2$ by (\ref{Eq:g}).
Hence we may assume $m=2$.
By Lemmas \ref{LD5}, $g$ acts on $\g_4$ as $\exp(\pi\sqrt{-1}s_{(0)})$ for some root $s\in L(D_5)$ up to conjugation.
Then by Lemma \ref{LD52}, $\exp(\pi\sqrt{-1}s_{(0)})$ is conjugate to $\exp(\pi\sqrt{-1}(2v+s)_{(0)})$, which completes this theorem.
\end{proof}

\subsection{Isomorphisms of the VOAs $\mathfrak{V}(\mathcal{S})$}\label{Proof2}
In this subsection, we establish the isomorphisms between certain VOAs $\mathfrak{V}(\mathcal{S})$.
Throughout this subsection, $V$ denotes $V_{\sqrt2E_8}^+$.

Let $\mathcal{S}$ be a maximal totally singular subspace of $R(V)^3$.
%In this subsection, we prove that the VOA structure of $\mathfrak{V}(\mathcal{S})$ is uniquely determined by the Lie algebra structure of the weight $1$ subspace when $\mathfrak{V}(\mathcal{S})\cong \g(C_8F_4^2)$ or $\g(A_7C_3^2A_3)$.
%The aim of this subsection is to show the VOAs in Proposition \ref{PAim} (2) and (3) are isomorphic for each case.
We now recall the $\Z_2$-orbifolds of $\mathfrak{V}(\mathcal{S})$ from \cite[Section 4.7]{LS}.
Let $W\in R(V)^3\setminus \mathcal{S}$ with $q_V^3(W)=0$.
Let $\chi_{W}:\mathcal{S}\to\F_2$ be the linear character of $\mathcal{S}$ defined by $\chi_W(W')=\langle W,W'\rangle$.
Then $\chi_W$ induces an automorphism $g_W$ of $\mathfrak{V}(\mathcal{S})$ of order $2$ acting
on $M'$ by $(-1)^{\chi_W(W')}$ for $W'=[M']\in\mathcal{S}$.
The fixed point subspace and the $\Z_2$-orbifold associated to $g_W$ are given as follows:

\begin{proposition}\label{PZ2}{\rm (\cite[Proposition 4.4]{LS})} The fixed point subspace of $\mathfrak{V}(\mathcal{S})$ with respect to $g_W$ is $\mathfrak{V}(\mathcal{S}\cap W^\perp)$, and the $\Z_2$-orbifold of $\mathfrak{V}(\mathcal{S})$ associated to $g_W$ is given by $\mathfrak{V}(\langle W, \mathcal{S}\cap W^\perp\rangle_{\F_2})$.
\end{proposition}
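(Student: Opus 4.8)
The plan is to treat the two assertions in turn, working throughout with the $\mathcal{S}$-grading $\mathfrak{V}(\mathcal{S})=\bigoplus_{W'\in\mathcal{S}}M_{W'}$ over the full subVOA $V^{\otimes3}$, where $M_{W'}$ denotes the irreducible $V^{\otimes3}$-module with $[M_{W'}]=W'$; set $\mathcal{S}_0=\mathcal{S}\cap W^\perp$ and $\mathcal{S}'=\langle W,\mathcal{S}_0\rangle_{\F_2}$. For the fixed-point subspace I would simply observe that $g_W$ acts on each component $M_{W'}$ by the scalar $(-1)^{\chi_W(W')}=(-1)^{\langle W,W'\rangle}$, so $M_{W'}$ is fixed precisely when $W'\in W^\perp$. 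Summing over $\mathcal{S}_0$ gives $\mathfrak{V}(\mathcal{S})^{g_W}=\bigoplus_{W'\in\mathcal{S}_0}M_{W'}=\mathfrak{V}(\mathcal{S}_0)$, a subVOA since $\mathcal{S}_0$ is a subspace. As $(R(V)^3,q_V^3)$ is nondegenerate and $\mathcal{S}$ is maximal totally singular, $\mathcal{S}=\mathcal{S}^\perp$; hence $W\notin\mathcal{S}$ forces $\chi_W\neq0$, so $\mathcal{S}_0$ has codimension $1$ in $\mathcal{S}$.

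Next I would verify that $\mathfrak{V}(\mathcal{S}')$ is a holomorphic VOA of central charge $24$. Since $W\notin\mathcal{S}\supseteq\mathcal{S}_0$, the subspace $\mathcal{S}'$ has the same dimension as $\mathcal{S}$, so it is enough to check that $\mathcal{S}'$ is totally singular; it is then maximal and Proposition \ref{PS1} applies. For $s\in\mathcal{S}_0$ and $\varepsilon\in\{0,1\}$ one computes $q_V^3(s+\varepsilon W)=q_V^3(s)+\varepsilon q_V^3(W)+\varepsilon\langle s,W\rangle=0$, using $q_V^3(s)=0$ (as $s\in\mathcal{S}$), the hypothesis $q_V^3(W)=0$, and $\langle s,W\rangle=0$ (as $s\in\mathcal{S}_0\subset W^\perp$).

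It then remains to identify $\mathfrak{V}(\mathcal{S}')$ with the $\Z_2$-orbifold, namely with the simple current extension of $\mathfrak{V}(\mathcal{S}_0)=\mathfrak{V}(\mathcal{S})^{g_W}$ by the integral-weight part of the irreducible $g_W$-twisted $\mathfrak{V}(\mathcal{S})$-module. Fix $\alpha_1\in\mathcal{S}\setminus\mathcal{S}_0$, so that $\langle W,\alpha_1\rangle=1$ and $\mathcal{S}=\mathcal{S}_0\sqcup(\alpha_1+\mathcal{S}_0)$. The untwisted sector restricts over $\mathfrak{V}(\mathcal{S}_0)$ to the two eigenspaces $\mathfrak{V}(\mathcal{S}_0)\oplus\bigl(\bigoplus_{s\in\mathcal{S}_0}M_{\alpha_1+s}\bigr)$, while I claim the irreducible $g_W$-twisted module restricts over $V^{\otimes3}$ to $\bigoplus_{W'\in W+\mathcal{S}}M_{W'}$, hence over $\mathfrak{V}(\mathcal{S}_0)$ to the two irreducibles $\bigoplus_{s\in\mathcal{S}_0}M_{W+s}$ and $\bigoplus_{s\in\mathcal{S}_0}M_{W+\alpha_1+s}$. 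The $g_W$-twisting is consistent with the monodromy: the vertex operators of $M_{\alpha_1}$ (on which $g_W$ acts by $-1$) carry half-integral powers of $z$, since the conformal-weight defect of fusing $M_{\alpha_1}$ with $M_{W+s}$ is $\equiv\tfrac{1}{2}\langle\alpha_1,W+s\rangle=\tfrac{1}{2}\langle\alpha_1,W\rangle=\tfrac{1}{2}\pmod{\Z}$ (using $\langle\alpha_1,s\rangle=0$ and $\langle\alpha_1,W\rangle=1$); this half-integrality is a standard feature of $(R(V)^3,q_V^3)$, cf.\ \cite{Sh2} and Lemma \ref{lem:4.12}. Finally $q_V^3$ separates the two twisted pieces: it vanishes on $W+\mathcal{S}_0$ but equals $1$ on $W+\alpha_1+\mathcal{S}_0$, so $\bigoplus_{s\in\mathcal{S}_0}M_{W+s}$ is the integral-weight piece. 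Therefore the orbifold is $\mathfrak{V}(\mathcal{S}_0)\oplus\bigoplus_{s\in\mathcal{S}_0}M_{W+s}=\mathfrak{V}(\mathcal{S}')$, and by the uniqueness of simple current extensions (Proposition \ref{PSCE}) its structure agrees with the holomorphic VOA furnished by Proposition \ref{PS1}.

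The main obstacle is the claim in the previous paragraph that there exists an irreducible $g_W$-twisted $\mathfrak{V}(\mathcal{S})$-module restricting over $V^{\otimes3}$ to the coset $\bigoplus_{W'\in W+\mathcal{S}}M_{W'}$ — that is, actually producing the twisted module rather than merely bookkeeping irreducible $\mathfrak{V}(\mathcal{S}_0)$-modules. I would settle this from the explicit construction of twisted modules for these simple current extensions in \cite{Sh6}; once the twisted module is in hand, the monodromy and $q_V^3$ computations above single out its integral-weight part, and everything else reduces to linear algebra in $(R(V)^3,q_V^3)$ together with the standard uniqueness of simple current extensions.
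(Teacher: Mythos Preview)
The paper does not give its own proof of this proposition: it is quoted verbatim from \cite[Proposition 4.4]{LS} and used as a black box. So there is no in-paper argument to compare your attempt against.

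That said, your outline is sound and is essentially the argument one finds in \cite{LS,Sh6}. The fixed-point computation is immediate from the definition of $g_W$, and your verification that $\mathcal{S}'=\langle W,\mathcal{S}_0\rangle_{\F_2}$ is maximal totally singular (hence $\mathfrak{V}(\mathcal{S}')$ is holomorphic by Proposition~\ref{PS1}) is correct. For the identification with the orbifold, your strategy of locating the integral-weight half of the $g_W$-twisted module inside the coset $W+\mathcal{S}$ is the right one, and your monodromy check via $\langle\alpha_1,W\rangle=1$ is exactly what forces the twist. One small simplification: once you know (from the general theory of simple current extensions, as in \cite{Sh6,LY}) that the irreducible $\mathfrak{V}(\mathcal{S}_0)$-modules are parametrised by $\mathcal{S}_0^\perp/\mathcal{S}_0=\{0,\alpha_1,W,\alpha_1+W\}+\mathcal{S}_0$, the twisted sector is forced to be the two cosets not in $\mathcal{S}/\mathcal{S}_0$, and the value $q_V^3(\alpha_1+W)=1$ singles out $W+\mathcal{S}_0$ as the integral piece; you do not really need to verify the half-integral powers of $z$ by hand. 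The ``main obstacle'' you flag---existence of the twisted module with the claimed $V^{\otimes3}$-decomposition---is indeed the only nontrivial input, and deferring it to \cite{Sh6} is appropriate.
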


\begin{remark}\label{Rem2} The $\Z_2$-orbifold of $\mathfrak{V}(\mathcal{S})$ associated to $g_W$ exists and the VOA structure is uniquely determined.
Hence if $g\in\Aut \mathfrak{V}(\mathcal{S})$ is conjugate to $g_W$, then the $\Z_2$-orbifolds of $\mathfrak{V}(\mathcal{S})$ associated to $g$ and $g_W$ are isomorphic.
%There are only two simple current extensions of $\mathfrak{V}(\mathcal{S}\cap W^\perp)$ $\mathfrak{V}(\mathcal{S})$ and
\end{remark}

%\begin{remark}\label{Rem2} Let $U$ be a VOA and let $g$ and $h$ be conjugated involutions in $\Aut U$.
%Assume that the $\Z_2$-orbifolds associated to $g$ and $h$ exist then they are isomorphic.
%\end{remark}

\subsubsection{Holomorphic VOAs with Lie algebra $\g(C_{8}F_{4}^2)$}

The aim of this subsection is to show that the VOAs $\mathfrak{V}(\mathcal{S}(5,3,0,-))$ and $\mathfrak{V}(\mathcal{S}(5,3,2,+))$ are obtained as the $\Z_2$-orbifolds of $V_{N(A_{15}D_9)}$ associated to conjugated involutions.
For the descriptions of $\mathcal{S}(5,k_1,k_2)$ and $\mathcal{S}(5,k_1,k_2,\varepsilon)$, see Theorems \ref{TClassify} and \ref{TClassify2}, respectively.
For the calculations in the Lie algebra $\mathfrak{V}(\mathcal{S})_1$, see \cite[Section 5]{LS}.

\begin{proposition}\label{P540} Let $\mathcal{S}=\mathcal{S}(5,4,0)$.
Let $b$ and $d$ be non-singular vectors in $B^\perp$ and in $T$.
Set $W=(b,d+z,0)$ and $\mathcal{T}=\langle \mathcal{S}\cap W^\perp,W\rangle_{\F_2}$.
\begin{enumerate}[{\rm (1)}]
\item The subspace $\mathcal{T}$ is conjugate to $\mathcal{S}(5,3,0,-)$ under $O(R(V)^3,q_V^3)$.
\item The Lie algebra $\mathfrak{V}(\mathcal{S}\cap W^\perp)_1$ is isomorphic to $\g(C_{8}B_{4}^2)$.
\end{enumerate}
\end{proposition}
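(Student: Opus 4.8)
The plan is to read $W=(b,d+z,0)$ as the datum of a $\mathbb{Z}_2$-orbifold of $\mathfrak{V}(\mathcal{S})$ and to identify the two subspaces $\mathcal{S}\cap W^\perp$ and $\mathcal{T}$ explicitly. First I would check that $g_W$ is a genuine order-$2$ automorphism, i.e. that $W$ is singular and $W\notin\mathcal{S}$. Since $d\in T=B^\perp$ and $z\in B$ we have $\langle d,z\rangle=0$, so $q_V^3(W)=q_V(b)+q_V(d)+q_V(z)=1+1+0=0$; and using $B\cap B^\perp=0$ together with the vanishing of the third coordinate of $W$, one sees directly that $W$ cannot lie in $\mathcal{S}$. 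Then $\chi_W$ is non-trivial on $\mathcal{S}=\mathcal{S}^\perp$, so $\mathcal{S}\cap W^\perp$ is a hyperplane, and by Proposition \ref{PZ2} the space $\mathcal{T}=\langle\mathcal{S}\cap W^\perp,W\rangle_{\F_2}$ is the associated orbifold datum; being spanned by the totally singular hyperplane $\mathcal{S}\cap W^\perp$ and the singular vector $W\in(\mathcal{S}\cap W^\perp)^\perp$, it is again maximal totally singular.

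For (1) I would first compute $\mathcal{S}\cap W^\perp$ by pairing the generic element of $\mathcal{S}$ (expressed through the generators $(s_1,t,\varphi(t))$, $(y,y,0)$, $(y,0,y)$, $(z,z,z)$) against $W$. Using $b\in B^\perp$, the orthogonality of $T$ and $B$, and $\langle y,z\rangle=1$, the pairing collapses to a single linear relation among the coefficients, which cuts out the hyperplane. With $\mathcal{S}\cap W^\perp$ and $W$ in hand I would then exhibit defining data $(S_1',0,P',Q',T',\varphi')$ as in Theorem \ref{TClassify}: a $3$-dimensional totally singular $S_1'$ coming from a hyperplane of $S_1$, a $2$-dimensional non-singular $P'$ assembled from $b,d,y,z$, and complementary $Q',T'$ with an induced isometry $\varphi'$. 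Since Theorem \ref{TClassify}(3) states that the resulting subspace depends only on $k_1,k_2$ and the type of $P'$, matching this data identifies $\mathcal{T}$ with $\mathcal{S}(5,3,0,-)$ up to $O(R(V)^3,q_V^3)$. The crux of this step is to verify that $P'$ is of \emph{minus} type: this is precisely the computation of $q_V$ on the relevant $2$-dimensional plane, where the non-singularity of $b$ and $d$ forces the plane to be anisotropic, i.e. of type $\varepsilon=-$.

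For (2) I would use that, by Proposition \ref{PZ2}, $\mathfrak{V}(\mathcal{S}\cap W^\perp)$ is the fixed-point subVOA $\mathfrak{V}(\mathcal{S})^{g_W}$, so that $\mathfrak{V}(\mathcal{S}\cap W^\perp)_1=(\mathfrak{V}(\mathcal{S})_1)^{g_W}$. Invoking the explicit description of $\mathfrak{V}(\mathcal{S}(5,4,0))_1\cong\g(A_{15}D_9)$ and of its root spaces in terms of $R(V)^3$ from \cite[Section 5]{LS}, the automorphism $g_W$ acts on the summand indexed by $[M]\in\mathcal{S}$ by the scalar $(-1)^{\langle W,[M]\rangle}$. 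I would then select the indices with $\langle W,[M]\rangle=0$ and assemble the fixed subalgebra. Because the weight-one space of $V=V_{\sqrt2E_8}^+$ is trivial, the whole Cartan of $\g(A_{15}D_9)$ is distributed among the nonzero sectors, so $g_W$ does not preserve a common Cartan; the surviving part folds $\g(A_{15})$ to $\g(C_8)$ and $\g(D_9)$ to $\g(B_4^2)$ (both of $-1$/outer type, cf.\ Lemma \ref{LD5}), dropping the rank from $24$ to $16$ and yielding $(\mathfrak{V}(\mathcal{S})_1)^{g_W}\cong\g(C_8B_4^2)$.

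The main obstacle is the explicit bookkeeping in (1): arranging the generators of $\mathcal{S}\cap W^\perp$ and of $\mathcal{T}$ into the rigid template of Theorem \ref{TClassify} and, above all, confirming that the distinguished $2$-dimensional non-singular plane has minus rather than plus type, since this single invariant is what separates $\mathcal{S}(5,3,0,-)$ from $\mathcal{S}(5,3,0,+)$. The root count in (2) is the other delicate point, but it is mechanical once the labeling of the roots of $\g(A_{15}D_9)$ by elements of $R(V)^3$ from \cite{LS} is fixed.
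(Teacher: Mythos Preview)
Your treatment of (1) is essentially the paper's argument: check that $W$ is singular and $W\notin\mathcal{S}$, cut out the hyperplane $\mathcal{S}\cap W^\perp$, and match $\mathcal{T}$ against the classification of Theorem \ref{TClassify}. The paper streamlines the matching by reading off $k_1=3$, $k_2=0$ from the dimensions of $\mathcal{T}\cap\{(r,0,0)\}$ and $\mathcal{T}\cap\{(0,r,0)\}$ rather than exhibiting the full datum $(S_1',0,P',Q',T',\varphi')$, and it pins down $\varepsilon=-$ by identifying the anisotropic $2$-plane as $\langle a+y,b\rangle_{\F_2}$, where $a\in S_1$ is singular with $\langle a,b\rangle=1$ (so the plane is not built from $b,d,y,z$ alone, as you suggest). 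But the substance is the same.

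For (2) you take a genuinely different route. The paper does \emph{not} analyse the action of $g_W$ on $\g(A_{15}D_9)$. Instead it splits $\mathcal{U}=\mathcal{S}\cap W^\perp$ into $\mathcal{U}^{(1)}=\mathcal{U}\cap\{(0,*,*)\}$ and a complementary piece $\mathcal{U}'$, observes that $\mathcal{U}^{(1)}=\mathcal{T}^{(1)}$ so that $\mathfrak{V}(\mathcal{U}^{(1)})_1$ coincides with the ideal $\g(C_8)$ of $\mathfrak{V}(\mathcal{T})_1$ already known from \cite[Proposition 5.31]{LS}, and then recognises $\mathfrak{V}(\mathcal{U}')_1$ as the ideal $\g(B_4^2)$ inside $\mathfrak{V}(\mathcal{S}(5,3,0,+))_1$ (known from \cite[Proposition 5.30]{LS}). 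This ideal-comparison argument avoids any direct computation of fixed points.

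Your fixed-point approach can be made to work, but two of your supporting remarks need correction. First, $g_W$ \emph{does} preserve the standard Cartan subalgebra: it acts by the scalar $(-1)^{\langle W,[M]\rangle}$ on each $V^{\otimes3}$-sector, hence stabilises setwise any subspace that is a sum of such sectors; the rank drops because $g_W$ negates an $8$-dimensional piece of the Cartan, not because it fails to preserve it. Second, invoking Lemma \ref{LD5} for the $D_9$ factor is off target---that lemma concerns $D_5$, and the involution of $\g(D_9)$ with fixed points $\mathfrak{so}(9)\oplus\mathfrak{so}(9)$ is a specific outer involution that you would still have to identify with the restriction of $g_W$. That identification is precisely the bookkeeping the paper's argument is designed to bypass.
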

\begin{proof} Let $a$ and $c$ be non-zero singular vectors in $S_1$ and in $T$ such that $\langle a,b\rangle=\langle c,d\rangle=1$, respectively.
By the description of $\mathcal{S}(5,4,0)$, we have \begin{eqnarray*}
\mathcal{S}\cap W^\perp&=&\biggl\langle (s,0,0),(a+y,y,0),(y,0,y),(z,z,z),(0,y+t',y+\varphi(t')),(0,t,\varphi(t))\\ && \ \bigg|\ s\in S_1\cap b^\perp,\ t\in T\cap d^\perp,\ t'\in c+T\cap d^\perp\biggr\rangle_{\F_2}.\end{eqnarray*}
Since $W$ is singular, $\mathcal{T}$ is maximal totally singular.
Moreover, $\mathcal{T}$ does not satisfy (\ref{Cond2}).
By $\dim (\mathcal{T}\cap \{(r,0,0)\mid r\in R(V)\})=3$, $\dim (\mathcal{T}\cap \{(0,r,0)\mid r\in R(V)\})=0$ and Theorem \ref{TClassify}, $\mathcal{T}$ is conjugate to $\mathcal{S}(5,3,0,\varepsilon)$.
Since the image of the first coordinate projection $\mathcal{T}\to R(V)$ is $(S_1\cap b^\perp)\perp \langle a+y,b\rangle_{\F_2}$ and both $a+y$ and $b$ are non-singular, we have $\varepsilon=-$.
Thus we obtain (1).

Set $\mathcal{U}=\mathcal{S}\cap W^\perp$ and $\mathcal{X}^{(1)}=\mathcal{X}\cap \{(0,u,v)\mid u,v\in R(V)\}$ for $\mathcal{X}=\mathcal{T},\mathcal{U}$.
Then by \cite[Proposition 5.31]{LS} $\mathfrak{V}(\mathcal{T}^{(1)})_1$ is an ideal of $\mathfrak{V}(\mathcal{T})_1$ and $\mathfrak{V}(\mathcal{T}^{(1)})_1\cong\g(C_8)$.
It follows from $\mathcal{T}^{(1)}=\mathcal{U}^{(1)}$ that $\mathfrak{V}(\mathcal{U}^{(1)})_1$ is an ideal of $\mathfrak{V}(\mathcal{U})_1$ isomorphic to $\g(C_8)$.
Set $$\mathcal{U}'=\langle(s,0,0),(a+y,y,0),(y,0,y),(z,z,z)\mid s\in S_1\cap b^\perp\rangle_{\F_2}.$$
Then $\mathfrak{V}(\mathcal{U})_1=\mathfrak{V}(\mathcal{U}^{(1)})_1\oplus\mathfrak{V}(\mathcal{U}')_1$, and $\mathfrak{V}(\mathcal{U}')_1$ is an ideal.
By \cite[Proposition 5.30]{LS}, $\mathfrak{V}(\mathcal{S}(5,3,0,+))_1\cong\g(B_{4}^2D_{8})$.
One can see that $\mathfrak{V}(\mathcal{U}')_1$ is isomorphic to the ideal $\g(B_4^2)$ of $\mathfrak{V}(\mathcal{S}(5,3,0,+))_1$.
Hence (2) holds.
\end{proof}

\begin{proposition}\label{P421} Let $S_1,S_2,S_3$ be totally singular subspaces of $R(V)$ such that $S_3\subset S_2\subset S_1$ and $\dim S_1=4$, $\dim S_2=2$ and $\dim S_3=1$.
Let $Q$ and $T$ be complementary subspaces of $S_1$ and of $S_2$ in $S_1^\perp$ and in $S_2^\perp$, respectively.
Set $U=(S_3\perp Q)^\perp$.
Let $\varphi$ be an isomorphism from $T$ to $U$.
Let $$\mathcal{S}=\{(s_1+q,s_2+t,s_3+q+\varphi(t))\mid s_i\in S_i,q\in Q,t\in T\}.$$
Let $b\in (Q\perp U)^\perp$ be a non-zero singular vector such that $b\notin S_3^\perp$.
Set $W=(b,0,b)$ and $\mathcal{T}=\langle \mathcal{S}\cap W^\perp,W\rangle_{\F_2}$.
\begin{enumerate}[{\rm (1)}]
\item The subspace $\mathcal{S}$ of $R(V)^3$ is maximal totally singular.
\item The VOA $\mathfrak{V}(\mathcal{S})$ is isomorphic to the lattice VOA $V_{N(A_{15}D_{9})}$.
\item The subspace $\mathcal{T}$ of $R(V)^3$ is conjugate to $\mathcal{S}(5,3,2,+)$ under $O(R(V)^3,q_V^3)$.
\item The Lie algebra $\mathfrak{V}(\mathcal{S}\cap W^\perp)_1$ is isomorphic to $\g(C_{8}B_{4}^2)$.
\end{enumerate}
\end{proposition}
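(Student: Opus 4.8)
The plan is to establish the four assertions in order, pushing the Lie-algebra content onto the weight one computations of \cite{LS} and onto the lattice identification. For (1), I would first check that the parametrization $(s_1,s_2,s_3,q,t)\mapsto(s_1+q,\,s_2+t,\,s_3+q+\varphi(t))$ is injective: vanishing of the first two coordinates forces $s_1=q=0$ (since $S_1\cap Q=0$) and $s_2=t=0$ (since $S_2\cap T=0$), whence $s_3=0$. As $\dim S_1+\dim S_2+\dim S_3+\dim Q+\dim T=4+2+1+2+6=15=\tfrac12\dim R(V)^3$, it then suffices to verify that $q_V^3$ vanishes on $\mathcal{S}$, after which maximality is automatic (the Witt index being $15$). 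Expanding by the polar form and using $q_V(s_i)=0$ together with $Q\subseteq S_1^\perp$, $T\subseteq S_2^\perp$ and $\varphi(T)\subseteq U\subseteq S_3^\perp\cap Q^\perp$, all cross terms drop out and one is left with $q_V(s_1+q)=q_V(q)$, $q_V(s_2+t)=q_V(t)$ and $q_V(s_3+q+\varphi(t))=q_V(q)+q_V(\varphi(t))$; the two copies of $q_V(q)$ cancel and $q_V(t)+q_V(\varphi(t))=0$ because $\varphi$ is an isometry. Hence $\mathcal{S}$ is maximal totally singular and $\mathfrak{V}(\mathcal{S})$ carries a holomorphic VOA structure of central charge $24$ by Proposition \ref{PS1}.

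For (2), specializing the parametrization shows $(a_1,0,0),(0,a_2,0),(0,0,a_3)\in\mathcal{S}$ for nonzero $a_i\in S_i$, so $\mathcal{S}$ satisfies (\ref{Cond2}); Lemma \ref{PS2} then gives $\mathfrak{V}(\mathcal{S})\cong V_L$ for an even unimodular lattice $L$ of rank $24$. To pin down $L$ I would compute the root system of $\mathfrak{V}(\mathcal{S})_1$ from the weight one calculations of \cite[Section 5]{LS} and check it is $A_{15}D_9$; since the Niemeier lattice with a prescribed root system is unique, $L\cong N(A_{15}D_9)$.

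For (3), note $W=(b,0,b)$ is singular, and $b\notin S_3^\perp$ ensures $W\notin\mathcal{S}$, so $g_W$ is a genuine involution and $\mathcal{T}=\langle\mathcal{S}\cap W^\perp,W\rangle_{\F_2}$ is again maximal totally singular. Since $b\in(Q\perp U)^\perp$, the pairing $\langle W,W'\rangle$ collapses to $\langle b,s_1+s_3\rangle$, so $\mathcal{S}\cap W^\perp$ is the explicit hyperplane cut out by this single linear condition. I would then verify that $\mathcal{T}$ fails (\ref{Cond2}), compute the coordinate-intersection dimensions $\dim(\mathcal{T}\cap\{(r,0,0)\})$ and $\dim(\mathcal{T}\cap\{(0,r,0)\})$ and the $\varepsilon$-type of the first-coordinate projection, and match these against Theorem \ref{TClassify} to conclude that $\mathcal{T}$ is conjugate to $\mathcal{S}(5,3,2,+)$, exactly as in Proposition \ref{P540}(1).

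Finally, (4) is the crux. By Proposition \ref{PZ2}, $\mathfrak{V}(\mathcal{S}\cap W^\perp)$ is the $g_W$-fixed subVOA, so $\mathfrak{V}(\mathcal{S}\cap W^\perp)_1$ is the fixed subalgebra of the involution induced by $g_W$ on $\mathfrak{V}(\mathcal{S})_1\cong\g(A_{15}D_9)$. Following Proposition \ref{P540}(2), I would split $\mathcal{S}\cap W^\perp$ into its coordinate-supported pieces, extract a $\g(C_8)$ ideal from one piece via \cite[Proposition 5.31]{LS} and a $\g(B_4^2)$ ideal from the complementary piece by matching it to the $\g(B_4^2)$ summand of $\mathfrak{V}(\mathcal{S}(5,3,0,+))_1\cong\g(B_4^2D_8)$ (\cite[Proposition 5.30]{LS}), obtaining $\mathfrak{V}(\mathcal{S}\cap W^\perp)_1\cong\g(C_8B_4^2)$; the same conclusion can be read from the lattice picture of (2), where $g_W$ acts as the outer involution of $\g(A_{15})$ with fixed points $\g(C_8)$ and the outer involution of $\g(D_9)$ with fixed points $\g(B_4^2)$. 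The hard part is precisely this last step: correctly decomposing $\mathcal{S}\cap W^\perp$ and tracking the induced sign action so that the two ideals emerge with exactly the right types, which is what then allows Theorem \ref{TA15D9} to pin down $g_W$ up to conjugacy.
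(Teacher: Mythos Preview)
Your proposal is correct and follows essentially the same route as the paper's proof. Two minor simplifications the paper makes that you might adopt: for (2), rather than computing the full root system, it suffices to invoke \cite[Proposition 5.17]{LS} to get $\dim\mathfrak{V}(\mathcal{S})_1=408$, which already singles out $N(A_{15}D_9)$ among Niemeier lattices; and for (4), the paper identifies the $136$-dimensional ideal as $\g(C_8)$ not via \cite[Proposition 5.31]{LS} but by observing that it is also an ideal of $\mathfrak{V}(\mathcal{T})_1\cong\g(C_8F_4^2)$, using part (3) directly.
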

\begin{proof} Since $\dim Q=2$ and $\dim T=6$, we have $\dim \mathcal{S}=15$.
By the definition of $\mathcal{S}$, it is totally singular.
Hence we have (1).

Take non-zero singular vectors $h_i$ in $S_i$ for $i=1,2,3$.
Then $(h_1,0,0),(0,h_2,0),(0,0,h_3)\in\mathcal{S}$.
By Lemma \ref{PS2}, $\mathfrak{V}(\mathcal{S})$ is a lattice VOA.
By $\dim\mathfrak{V}(\mathcal{S})_1=408$ (cf.\ \cite[Proposition 5.17]{LS}), we have $\mathfrak{V}(\mathcal{S})\cong V_{N(A_{15}D_9)}$.
Hence (2) holds.

By the direct calculation, we have
\begin{equation}\mathcal{S}\cap W^\perp=\{(s_1+s_3+q,s_2+t,s_3+q+\varphi(t))\mid s_1\in S_1\cap b^\perp, s_2\in S_2, s_3\in S_3, q\in Q, t\in T\}.\label{Eq:U}
\end{equation}
Since $W$ is singular, $\mathcal{T}$ is maximal totally singular.
Moreover $\mathcal{T}$ does not satisfy (\ref{Cond2}).
Set $\mathcal{T}^{(ij)}=\{(r_1,r_2,r_3)\in\mathcal{T}\mid r_i=r_j=0\}$.
Then $\dim \mathcal{T}^{(23)}=3$, $\dim\mathcal{T}^{(13)}=2$, $\mathcal{T}^{(12)}=0$, and by Theorem \ref{TClassify},
$\mathcal{T}$ is conjugate to $\mathcal{S}(5,3,2,+)$, which proves (3).

By (\ref{Eq:U}), $\mathfrak{V}(\mathcal{S}\cap W^\perp)_1$ is the direct sum of two ideals
\begin{eqnarray*}
&&\mathfrak{V}(\{(s_1+s_3+q,0,s_3+q)\mid q\in Q, s_1\in S_1\cap b^\perp, s_3\in S_3\})_1,\\
&&\mathfrak{V}(\{(0,s_2+t,\varphi(t))\mid s_2\in S_2, t\in T\})_1,
\end{eqnarray*}
and their dimensions are $136$ and $72$, respectively.
The former is also an ideal of $\mathfrak{V}(\mathcal{T})_1$, and hence it is isomorphic to $\g(C_{8})$.
One can see that the latter is isomorphic to the ideal $\g(B_4^2)$ of $\mathfrak{V}(\mathcal{S}(5,3,0,+))_1$ (cf.\ \cite[Proposition 5.30]{LS}).
Hence (4) holds.
\end{proof}

It was shown in \cite[Proposition 5.40]{LS} that $\mathfrak{V}(\mathcal{S}(5,4,0))\cong V_{N(A_{15}D_9)}$.
Combining Remark \ref{Rem2}, Theorem \ref{TA15D9}, Propositions \ref{PZ2}, \ref{P540} and \ref{P421}, we obtain the following theorem:

\begin{theorem}\label{MT2}
The VOAs $\mathfrak{V}(\mathcal{S}(5,3,0,-))$ and $\mathfrak{V}(\mathcal{S}(5,3,2,+))$ are isomorphic.
\end{theorem}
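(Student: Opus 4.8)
The plan is to realize both VOAs as $\Z_2$-orbifolds of the same lattice VOA $V_{N(A_{15}D_9)}$ associated to a pair of involutions, and then to invoke the uniqueness of conjugacy classes established in Theorem \ref{TA15D9}. The starting point is the known isomorphism $\mathfrak{V}(\mathcal{S}(5,4,0))\cong V_{N(A_{15}D_9)}$ from \cite[Proposition 5.40]{LS}, together with Proposition \ref{P421}(2), which identifies $\mathfrak{V}(\mathcal{S})\cong V_{N(A_{15}D_9)}$ for the space $\mathcal{S}$ constructed there. Thus both Propositions \ref{P540} and \ref{P421} provide a maximal totally singular subspace whose associated VOA is $V_{N(A_{15}D_9)}$, equipped with a distinguished singular vector $W\in R(V)^3\setminus\mathcal{S}$.

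The key steps are as follows. First I would apply Proposition \ref{PZ2} to each of the two situations: the $\Z_2$-orbifold of $\mathfrak{V}(\mathcal{S}(5,4,0))$ associated to $g_W$ (with $W$ as in Proposition \ref{P540}) is $\mathfrak{V}(\langle W,\mathcal{S}\cap W^\perp\rangle_{\F_2})=\mathfrak{V}(\mathcal{T})$, which by Proposition \ref{P540}(1) is conjugate to $\mathcal{S}(5,3,0,-)$; hence by Remark \ref{Rem}(2) this orbifold is isomorphic to $\mathfrak{V}(\mathcal{S}(5,3,0,-))$. Similarly, the $\Z_2$-orbifold of $\mathfrak{V}(\mathcal{S})$ (with $\mathcal{S},W$ as in Proposition \ref{P421}) associated to $g_W$ is $\mathfrak{V}(\mathcal{T})$, which by Proposition \ref{P421}(3) is conjugate to $\mathcal{S}(5,3,2,+)$, so that orbifold is $\mathfrak{V}(\mathcal{S}(5,3,2,+))$. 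Second, both fixed-point subVOAs $\mathfrak{V}(\mathcal{S}\cap W^\perp)$ have weight-one Lie algebra isomorphic to $\g(C_8B_4^2)$, by Propositions \ref{P540}(2) and \ref{P421}(4). Since these fixed-point subVOAs are precisely $V_{N(A_{15}D_9)}^{g_W}$ under the identification with the lattice VOA, the two involutions $g_W$ both satisfy the hypothesis of Theorem \ref{TA15D9}, namely that the fixed point Lie subalgebra is isomorphic to $\g(C_8B_4^2)$.

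The decisive step is then Theorem \ref{TA15D9}: there is exactly one conjugacy class of such involutions in $\Aut V_{N(A_{15}D_9)}$, so the two involutions are conjugate. By Remark \ref{Rem2}, conjugate involutions yield isomorphic $\Z_2$-orbifolds, and therefore
\[
\mathfrak{V}(\mathcal{S}(5,3,0,-))\cong\mathfrak{V}(\mathcal{S}(5,3,2,+)),
\]
which is the assertion of Theorem \ref{MT2}.

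I expect the main obstacle to be the matching of the two fixed-point Lie algebras: it is not enough that both are abstractly isomorphic to $\g(C_8B_4^2)$; one must verify that under the identifications $\mathfrak{V}(\mathcal{S}(5,4,0))\cong V_{N(A_{15}D_9)}$ and $\mathfrak{V}(\mathcal{S})\cong V_{N(A_{15}D_9)}$ the relevant involutions are genuinely automorphisms of the \emph{same} VOA $V_{N(A_{15}D_9)}$ with fixed-point Lie subalgebra of the prescribed type, so that Theorem \ref{TA15D9} is literally applicable. The careful tracking of ideals in Propositions \ref{P540} and \ref{P421}—in particular the identifications of the $\g(C_8)$ ideal coming from $\mathcal{T}^{(1)}$ and the $\g(B_4^2)$ ideal coming from $\mathfrak{V}(\mathcal{S}(5,3,0,+))_1$—is precisely what secures this, so the bulk of the work is already done in those propositions and the remaining argument is the clean assembly described above.
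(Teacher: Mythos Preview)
Your proposal is correct and follows exactly the paper's approach: the proof in the paper is simply the one-line assembly ``Combining Remark \ref{Rem2}, Theorem \ref{TA15D9}, Propositions \ref{PZ2}, \ref{P540} and \ref{P421}'', together with the identification $\mathfrak{V}(\mathcal{S}(5,4,0))\cong V_{N(A_{15}D_9)}$ from \cite[Proposition 5.40]{LS}. Your concern about transporting the involutions to the \emph{same} copy of $V_{N(A_{15}D_9)}$ is handled automatically, since fixing any isomorphism carries $g_W$ to an involution with the prescribed fixed-point Lie subalgebra, and Theorem \ref{TA15D9} is stated for arbitrary such involutions.
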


\subsubsection{Holomorphic VOAs with Lie algebra $\g(A_{7}C_{3}^2A_{3})$}
The aim of this subsection is to show that the VOAs $\mathfrak{V}(\mathcal{S}(5,2,0))$ and $\mathfrak{V}(\mathcal{S}(5,2,1,+))$ are obtained as the $\Z_2$-orbifolds of $V_{N(A_{7}^2D_5^2)}$ associated to conjugated involutions.
For the descriptions of $\mathcal{S}(5,k_1,k_2)$ and $\mathcal{S}(5,k_1,k_2,\varepsilon)$, see Theorems \ref{TClassify} and \ref{TClassify2}, respectively.
For the calculations in the Lie algebra $\mathfrak{V}(\mathcal{S})_1$, see \cite[Section 5]{LS}.

\begin{proposition}\label{LS52} Let $\mathcal{S}=\mathcal{S}(5,2,0)$.
Let $a$ and $b$ be non-zero singular vectors in $P$ and $Q$, respectively.
Set $W=(a+b,0,0)$ and $\mathcal{T}=\langle \mathcal{S}\cap W^\perp, W\rangle_{\F_2}$.
\begin{enumerate}[{\rm (1)}]
\item The VOA $\mathfrak{V}(\mathcal{T})$ is isomorphic to $V_{N(A_7^2D_5^2)}$.
\item The Lie algebra $\mathfrak{V}(\mathcal{S}\cap W^\perp)_1$ is isomorphic to $\g(A_{7}A_{3}B_2^2A_{1}^2)$.
\end{enumerate}
\end{proposition}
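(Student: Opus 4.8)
The plan is to exhibit $\mathfrak{V}(\mathcal{T})$ as the $\Z_2$-orbifold of $\mathfrak{V}(\mathcal{S})$ by $g_W$ and to identify its isomorphism class through the weight one Lie algebra. First I would check that $W=(a+b,0,0)$ is singular and lies outside $\mathcal{S}$: since $a\in P$ and $b\in Q$ are singular with $\langle a,b\rangle=0$ (as $P\perp Q$), we get $q_V^3(W)=q_V(a)+q_V(b)+\langle a,b\rangle=0$, while any vector of $\mathcal{S}$ with vanishing second and third coordinates has first coordinate in $S_1$, which cannot equal $a+b$ because $a+b$ has a nonzero $Q$-component. Thus Proposition \ref{PZ2} applies, $\mathcal{T}=\la W,\mathcal{S}\cap W^\perp\ra_{\F_2}$ is maximal totally singular, and $\mathfrak{V}(\mathcal{T})$ is the $\Z_2$-orbifold of $\mathfrak{V}(\mathcal{S})$ associated to $g_W$. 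A short computation, using that $a$ and $b$ are orthogonal to $S_1$, $B$ and to $Q$, resp.\ $P$, shows that $\mathcal{S}\cap W^\perp$ consists of the elements of $\mathcal{S}$ satisfying the single relation $\langle a,p\rangle=\langle b,q\rangle$.

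For (1) I would first verify (\ref{Cond2}) for $\mathcal{T}$ and then name the lattice. The vector $(a,a,0)$ already lies in $\mathcal{S}\cap W^\perp$ (take $p=a$, $s_1=q=t=0$), and $(0,a,b)=W+(a+b,a,b)$ lies in $\mathcal{T}$ with $(a+b,a,b)\in\mathcal{S}\cap W^\perp$ (take $p=a$, $q=b$, $s_1=t=0$); their nonzero entries are singular, so (\ref{Cond2}) holds and Lemma \ref{PS2} gives $\mathfrak{V}(\mathcal{T})\cong V_L$ or $\tilde V_L$ for an even unimodular lattice $L$ of rank $24$. In particular the $\sfr1{16}$-code of $\mathfrak{V}(\mathcal{T})$ satisfies (i). I would then compute $\mathfrak{V}(\mathcal{T})_1$, decomposing $\mathcal{T}$ into its totally singular sub-subspaces supported on the various pairs of coordinate slots and reading off each ideal from the weight one calculations of \cite[Section 5]{LS}; the outcome should be $\mathfrak{V}(\mathcal{T})_1\cong\g(A_7^2D_5^2)$, of dimension $216$. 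As $A_7^2D_5^2$ is the root system of the Niemeier lattice $N(A_7^2D_5^2)$, Proposition \ref{PDGM} then forces $\mathfrak{V}(\mathcal{T})\cong V_{N(A_7^2D_5^2)}$, proving (1).

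For (2), the fixed-point subVOA of $g_W$ is $\mathfrak{V}(\mathcal{S}\cap W^\perp)$ by Proposition \ref{PZ2}, so I must compute $\mathfrak{V}(\mathcal{S}\cap W^\perp)_1$. Imitating the proofs of Propositions \ref{P540} and \ref{P421}, I would split $\mathcal{S}\cap W^\perp$ as a sum of totally singular sub-subspaces, each supported on two of the three coordinate slots and each giving an ideal of $\mathfrak{V}(\mathcal{S}\cap W^\perp)_1$ by \cite[Section 5]{LS}; matching dimensions and types against $\g(A_7)$, $\g(A_3)$, $\g(B_2^2)$ and $\g(A_1^2)$ would yield $\mathfrak{V}(\mathcal{S}\cap W^\perp)_1\cong\g(A_7A_3B_2^2A_1^2)$ of dimension $104$, which is (2). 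This subalgebra is also the fixed points of the reverse involution on $V_{N(A_7^2D_5^2)}$, which is precisely what the subsequent theorem will feed into Theorem \ref{TA7D5}.

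The main obstacle is the two weight one identifications. Determining $\mathfrak{V}(\mathcal{T})_1\cong\g(A_7^2D_5^2)$ is what actually separates the lattice VOA from its orbifold and pins down the lattice, and together with the ideal-by-ideal analysis of $\mathfrak{V}(\mathcal{S}\cap W^\perp)_1$ it requires careful use of the explicit weight one spaces of the irreducible $V$-modules and of their Lie brackets from \cite[Section 5]{LS}. Tracking which module feeds which simple ideal, and handling the coupling imposed by the relation $\langle a,p\rangle=\langle b,q\rangle$ on the $P$- and $Q$-components, is the delicate part of the argument.
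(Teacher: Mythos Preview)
Your overall strategy matches the paper's: verify that $W$ is singular and $W\notin\mathcal{S}$, apply Proposition \ref{PZ2}, check (\ref{Cond2}) via $(a,a,0)$ and $(0,a,b)$, and then identify both $\mathfrak{V}(\mathcal{T})_1$ and $\mathfrak{V}(\mathcal{S}\cap W^\perp)_1$ by weight one calculations in the style of \cite[Section 5]{LS}. Your description of $\mathcal{S}\cap W^\perp$ by the single relation $\langle a,p\rangle=\langle b,q\rangle$ is correct.

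Two refinements are worth noting. For (1), the paper does not compute the full Lie algebra $\g(A_7^2D_5^2)$; instead it uses the dimension formula \cite[Proposition 5.17]{LS} to get $\dim\mathfrak{V}(\mathcal{T})_1=216$, which together with Lemma \ref{PS2} forces $\mathfrak{V}(\mathcal{T})\cong V_{N(A_7^2D_5^2)}$ or $\tilde V_{N(A_{17}E_7)}$, and then exhibits a single $126$-dimensional ideal (impossible in $\g(D_9A_7)$) to decide between them. This is quicker than your proposed full decomposition. More importantly, your expectation that every simple ideal sits in a subspace ``supported on two of the three coordinate slots'' is not borne out: in (2) the $\g(A_7)$ ideal does come from the slot $\{2,3\}$, but the complementary $41$-dimensional ideal $\g(A_3B_2^2A_1^2)$ is the weight one space of $\mathfrak{V}(\langle(s,0,0),(a,a,0),(b,0,b),(c+d,c,d),(y,y,0),(y,0,y)\mid s\in S_1\rangle_{\F_2})$, which genuinely involves all three coordinates. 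The paper handles this piece by an explicit calculation in Appendix A.1, choosing concrete $V_{\sqrt2E_8}^+$-modules for $S_1,y,a,b$ and writing down the five simple summands. So the ``coupling'' you flag at the end is not merely delicate bookkeeping---it actually prevents the clean two-slot splitting you propose, and you should plan for an explicit identification of that $41$-dimensional piece rather than a structural one.
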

\begin{proof} Let $c\in P$ and $d\in Q$ be non-singular vectors satisfying $\langle a,c\rangle=\langle b,d\rangle=1$.
Then $$\mathcal{T}=\biggl\langle(s,t,\varphi(t)),(a,a,0),(b,0,b),(c+d,c,d),(y,0,y),(0,y,y)\ \bigg|\  s\in \langle S_1,a+b\rangle_{\F_2},t\in T\biggr\rangle_{\F_2}.$$
Since $\mathcal{T}$ contains $(a,a,0)$ and $(a,0,b)$, the VOA $\mathfrak{V}(\mathcal{T})$ satisfies (\ref{Cond2}).
Hence by Lemma \ref{PS2} it is isomorphic to a lattice VOA or its $\Z_2$-orbifold.
It follows from $\dim\mathfrak{V}(\mathcal{T})_1=216$ (cf.\ \cite[Proposition 5.17]{LS}) that $\mathfrak{V}(\mathcal{T})\cong V_{N(A_{7}^2D_{5}^2)}$ or $\tilde{V}_{N(A_{17}E_{7})}$.
Note that $(V_{N(A_{7}^2D_{5}^2)})_1\cong\g(A_7^2D_5^2)$ and $(\tilde{V}_{N(A_{17}E_{7})})_1\cong\g(D_9A_7)$.
Since the subspace $$\mathfrak{V}\left(\biggl\langle(0,a,b),(0,y,y),(0,t,\varphi(t))\ \bigg|\ t\in T\biggr\rangle_{\F_2}\setminus\biggl\langle(0,y,y),(0,y+a,y+b)\biggr\rangle_{\F_2}\right)_1$$ is
a $126$-dimensional ideal, we have $\mathfrak{V}(\mathcal{T})_1\cong\g(A_7^2D_5^2)$ and $\mathfrak{V}(\mathcal{T})\cong V_{N(A_7^2D_5^2)}$.
Hence (1) holds.

Let us determine the Lie algebra structure of $\g=\mathfrak{V}(\mathcal{S}\cap W^\perp)_1$.
It is easy to see that $$\mathcal{S}\cap W^\perp=\biggl\langle(s,t,\varphi(t)),(a,a,0),(b,0,b),(c+d,c,d),(y,0,y),(0,y,y)\ \bigg|\ s\in S_1,t\in T\biggr\rangle_{\F_2}.$$
Then the subspace $$\mathfrak{V}\left(\biggl\langle(0,y,y),(0,t,\varphi(t))\mid t\in T\biggr\rangle_{\F_2}\setminus\{(0,y,y)\}\right)_1$$
is a $63$-dimensional ideal of $\g$, and it is also an ideal of $\mathfrak{V}(\mathcal{S}(5,2,0))_1$ isomorphic to $\g(A_{7})$.
One can see that the other $41$-dimensional ideal
\begin{equation}
\mathfrak{V}\left(\biggl\langle (s,0,0),(a,a,0),(b,0,b),(c+d,c,d),(y,y,0),(y,0,y)\ \bigg|\ s\in S_1\biggr\rangle_{\F_2}\right)_1.\label{Lie41}
\end{equation}
is isomorphic to $\g(A_3B_2^2A_1^2)$.
For the detail, see Appendix A.1.
Hence (2) holds.
\end{proof}

\begin{proposition}\label{LS521+} Let $\mathcal{S}=\mathcal{S}(5,2,1,+)$ and let $a$ be a non-zero singular vector in $Q$.
Set $W=(a,0,0)$ and $\mathcal{T}=\langle \mathcal{S}\cap W^\perp, W\rangle_{\F_2}$.
\begin{enumerate}[{\rm (1)}]
\item The VOA $\mathfrak{V}(\mathcal{T})$ is isomorphic to $V_{N(A_7^2D_5^2)}$.
\item The Lie algebra $\mathfrak{V}(\mathcal{S}\cap W^\perp)_1$ is isomorphic to $\g(A_{7}A_{3}B_2^2A_{1}^2)$.
\end{enumerate}
\end{proposition}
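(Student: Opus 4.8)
The plan is to mirror the proof of Proposition~\ref{LS52} step by step, now with $\mathcal{S}=\mathcal{S}(5,2,1,+)$ described as in Theorem~\ref{TClassify} and with $W=(a,0,0)$ for a non-zero singular vector $a\in Q$. First I would record the two relevant subspaces. Since $S_1$, $P$ and $Q$ are mutually independent and $a\in Q$ is orthogonal to $S_1\perp P$, pairing a general element $(s_1+p+q,s_2+p+t,q+\varphi(t))\in\mathcal{S}$ against $W$ reduces to $\langle q,a\rangle$, so that
\[
\mathcal{S}\cap W^\perp=\{(s_1+p+q,\,s_2+p+t,\,q+\varphi(t))\mid s_1\in S_1,\,s_2\in S_2,\,p\in P,\,q\in Q\cap a^\perp,\,t\in T\},
\]
and $\mathcal{T}=\langle\mathcal{S}\cap W^\perp,\,W\rangle_{\F_2}$.

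For part (1), I would first check condition~(\ref{Cond2}). Choosing a non-zero $s_2\in S_2$, the vector $(0,s_2,0)$ lies in $\mathcal{S}\cap W^\perp$, so $(a,s_2,0)=W+(0,s_2,0)$ lies in $\mathcal{T}$; likewise the generator with $q=a$, $t=0$ gives $(a,s_2,a)\in\mathcal{S}\cap W^\perp$, whence $(0,s_2,a)=W+(a,s_2,a)\in\mathcal{T}$. As $a$ and $s_2$ are singular, the pair $(a,s_2,0),(0,s_2,a)$ realizes~(\ref{Cond2}). By Lemma~\ref{PS2}, $\mathfrak{V}(\mathcal{T})$ is thus a lattice VOA or its $\Z_2$-orbifold, and computing $\dim\mathfrak{V}(\mathcal{T})_1=216$ from \cite[Proposition 5.17]{LS} leaves only $V_{N(A_7^2D_5^2)}$ and $\tilde{V}_{N(A_{17}E_7)}$, with weight-one Lie algebras $\g(A_7^2D_5^2)$ and $\g(D_9A_7)$. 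To rule out the latter I would exhibit, as in the proof of Proposition~\ref{LS52}, a $126$-dimensional ideal of $\mathfrak{V}(\mathcal{T})_1$ supported in the last two coordinates and isomorphic to $\g(A_7^2)$; since $\g(D_9A_7)$ has no ideal of dimension $126$, this forces $\mathfrak{V}(\mathcal{T})\cong V_{N(A_7^2D_5^2)}$.

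For part (2), I would decompose $\mathfrak{V}(\mathcal{S}\cap W^\perp)_1$ into two ideals. The summand supported in the second and third coordinates, $\mathfrak{V}(\{(0,s_2+t,\varphi(t))\mid s_2\in S_2,\,t\in T\})_1$, is $63$-dimensional, is simultaneously an ideal of $\mathfrak{V}(\mathcal{S}(5,2,1,+))_1$, and should be isomorphic to $\g(A_7)$; the complementary $41$-dimensional ideal, built from the generators involving $S_1$, $P$ and $Q\cap a^\perp$, is expected to be isomorphic to $\g(A_3B_2^2A_1^2)$. Adding the two yields $\g(A_7A_3B_2^2A_1^2)$, as required. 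The detailed identification of the type of the $41$-dimensional piece---together with the precise coset bookkeeping (including any excluded cosets, as in the companion computation)---would be recorded in Appendix~A, in exact parallel with the treatment of Proposition~\ref{LS52}.

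The step I expect to be the main obstacle is this last identification: distinguishing $\g(A_3B_2^2A_1^2)$ from other semisimple Lie algebras of dimension $41$ requires tracking the explicit module structure and top weights of the irreducible $V^{\otimes3}$-modules (with $V=V_{\sqrt2E_8}^+$) occurring in the relevant cosets, i.e.\ the computation deferred to Appendix~A. A secondary point needing care is confirming that the $126$-dimensional subspace in part (1) is genuinely a Lie ideal of $\mathfrak{V}(\mathcal{T})_1$ of the stated dimension; both verifications rely on the structure of $\mathfrak{V}(\mathcal{S})_1$ worked out in \cite[Section 5]{LS}.
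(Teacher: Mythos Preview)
For part (1) your argument works, but it is more roundabout than necessary. Observe that $\mathcal{T}$ contains $(a,0,0)=W$, $(0,h_2,0)$ for any non-zero $h_2\in S_2$, and $(0,0,a)=(a,0,a)+W$ (where $(a,0,a)\in\mathcal{S}\cap W^\perp$ arises from $q=a\in Q\cap a^\perp$, since $a$ is singular). The second clause of Lemma~\ref{PS2} then gives directly that $\mathfrak{V}(\mathcal{T})$ is a lattice VOA, and $\dim\mathfrak{V}(\mathcal{T})_1=216$ already forces $\mathfrak{V}(\mathcal{T})\cong V_{N(A_7^2D_5^2)}$; there is no need to rule out $\tilde{V}_{N(A_{17}E_7)}$ via a $126$-dimensional ideal. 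This is exactly the route the paper takes.

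For part (2) there is a genuine gap: the coordinate-support decomposition you propose does not give ideals. In $\mathcal{S}(5,2,1,+)$ one has $\dim S_2=1$ and $\dim T=6$, and the subalgebra $\mathfrak{V}(\{(0,s_2+t,\varphi(t))\mid s_2\in S_2,\,t\in T\})_1$ is $64$-dimensional, not $63$: it contains the $\g(A_7)$ ideal together with a one-dimensional piece $H'\subset\mathfrak{V}(\{(0,h_2,0)\})_1$ lying in the Cartan subalgebra of the \emph{other} ideal $\g(A_3B_2^2A_1^2)$. Correspondingly, the complementary coordinate-supported subspace is only $40$-dimensional. Since $H'$ brackets nontrivially with root vectors supported in that complement, neither of your two pieces is an ideal of $\mathfrak{V}(\mathcal{S}\cap W^\perp)_1$. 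The analogy with Proposition~\ref{LS52} breaks down because there the carved-out coset $(0,y,y)$ was handled explicitly; here no single excluded coset does the job. The paper instead passes to the root-space decomposition relative to the Cartan subalgebra $\mathfrak{V}(\{(h_1,0,0),(0,h_2,0)\})_1$: the two orthogonal root spaces have dimensions $56$ and $32$, the first is recognised as the root space of $\g(A_7)$ inside $\mathfrak{V}(\mathcal{S}(5,2,1,+))_1\cong\g(A_7C_3^2A_3)$, and Appendix~A.2 shows the second, together with the stray $H'$, assembles into $\g(A_3B_2^2A_1^2)$.
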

\begin{proof} Set $Q'=Q\cap a^\perp$.
Then $$\mathcal{T}=\{(s_1+p+q,s_2+p+t,s_3+q+\varphi(t))\mid s_1\in \langle S_1,a\rangle_{\F_2}, s_2\in S_2, s_3\in\langle a\rangle_{\F_2},p\in P,q\in Q',t\in T\}.$$
Take a non-zero singular vector $h_2\in S_2$.
Then it follows from $(a,0,0), (0,h_2,0),(0,0,a)\in\mathcal{T}$ and Lemma \ref{PS2} that $\mathfrak{V}(\mathcal{T})$ is a lattice VOA.
By $\dim\mathfrak{V}(\mathcal{T})_1=216$ (cf.\ \cite[Proposition 5.17]{LS}), we have $\mathfrak{V}(\mathcal{T})\cong V_{N(A_7^2D_5^2)}$.
Hence (1) holds.

By direct calculation, we have $$\mathcal{S}\cap W^\perp=\biggl\langle(s_1+p+q,s_2+p+t,q+\varphi(t))\ \bigg|\ s_i\in S_i,p\in P,q\in Q', t\in T\biggr\rangle_{\F_2}.$$
Let us determine the Lie algebra structure of $\g=\mathfrak{V}(\mathcal{S}\cap W^\perp)_1$.
Take non-zero singular vectors $h_1\in S_1$ and $h_2\in S_2$.
Then by \cite[Lemma 5.19 (2)]{LS}, $\mathfrak{V}(\{(h_1,0,0),(0,h_2,0)\})_1$ is a Cartan subalgebra of $\g$.
Consider the root space decomposition of $\g$ with respect to the Cartan subalgebra.
Then it is easy to see that
\begin{eqnarray}
&&\mathfrak{V}(\langle (s_1+p+q,s_2+p,q)\mid s_i\in S_i, p\in P,q\in Q'\rangle_{\F_2}\setminus\{(h_1,0,0),(0,h_2,0)\})_1,\label{6Id}\\
&&\mathfrak{V}(\{ (0,s_2+t,\varphi(t))\mid t\in T,s_2\in S_2\}\setminus\{(0,h_2,0)\})_1\label{6Id2}
\end{eqnarray}
are mutually orthogonal root spaces and their dimensions are $32$ and $56$.
Since (\ref{6Id2}) is contained in $\mathfrak{V}(\mathcal{S}(5,2,1,+))_1$, it is a root space of $\g(A_7)$.
One can see that (\ref{6Id}) is a root space of $\g(A_3B_2^2A_1^2)$.
For the detail, see Appendix A.2.
Hence (2) holds.
\end{proof}

Combining Remark \ref{Rem2}, Theorem \ref{TA7D5}, Propositions \ref{PZ2}, \ref{LS52} and \ref{LS521+}, we obtain the following theorem:

\begin{theorem}\label{MT3}
The VOAs $\mathfrak{V}(\mathcal{S}(5,2,0))$ and $\mathfrak{V}(\mathcal{S}(5,2,1,+))$ are isomorphic.
\end{theorem}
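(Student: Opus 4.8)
The plan is to realize both $\mathfrak{V}(\mathcal{S}(5,2,0))$ and $\mathfrak{V}(\mathcal{S}(5,2,1,+))$ as $\Z_2$-orbifolds of the single lattice VOA $V_{N(A_7^2D_5^2)}$ by involutions that share the same fixed-point Lie subalgebra $\g(A_7A_3B_2^2A_1^2)$, and then to invoke the uniqueness of the corresponding conjugacy class (Theorem \ref{TA7D5}) together with Remark \ref{Rem2} to conclude that the two VOAs are isomorphic. This runs exactly parallel to the proof of Theorem \ref{MT2}, with Propositions \ref{LS52} and \ref{LS521+} playing the roles that Propositions \ref{P540} and \ref{P421} played there.

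Concretely, I would first treat $\mathcal{S}=\mathcal{S}(5,2,0)$. With $W=(a+b,0,0)$ as in Proposition \ref{LS52}, set $U=\mathcal{S}\cap W^\perp$ and $\mathcal{T}=\langle W,U\rangle_{\F_2}$; by Proposition \ref{PZ2} the $\Z_2$-orbifold of $\mathfrak{V}(\mathcal{S})$ associated to $g_W$ is $\mathfrak{V}(\mathcal{T})$, and Proposition \ref{LS52}(1) identifies $\mathfrak{V}(\mathcal{T})\cong V_{N(A_7^2D_5^2)}$. The step that needs care is the \emph{reversal}: one must exhibit $\mathfrak{V}(\mathcal{S})$ itself as an orbifold of $\mathfrak{V}(\mathcal{T})$. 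Since $W\notin\mathcal{S}=\mathcal{S}^\perp$, the subspace $U$ has codimension one in the maximal totally singular space $\mathcal{S}$, and $\mathcal{S},\mathcal{T}$ are precisely the two maximal totally singular subspaces containing $U$ (the two singular lines of the $2$-dimensional nondegenerate space $U^\perp/U$). Choosing any $W'\in\mathcal{S}\setminus U$, one checks that $q_V^3(W')=0$, that $\langle W',W\rangle=1$ (so $W\notin W'^\perp$) and that $W'\notin\mathcal{T}$; hence $\mathcal{T}\cap W'^\perp=U$ and $\langle W',U\rangle_{\F_2}=\mathcal{S}$, so by Proposition \ref{PZ2} the $\Z_2$-orbifold of $\mathfrak{V}(\mathcal{T})$ associated to $g_{W'}$ equals $\mathfrak{V}(\mathcal{S})$. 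Its fixed-point subVOA is $\mathfrak{V}(U)=\mathfrak{V}(\mathcal{S}\cap W^\perp)$, whose weight-one Lie algebra is $\g(A_7A_3B_2^2A_1^2)$ by Proposition \ref{LS52}(2).

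Running the identical argument for $\mathcal{S}=\mathcal{S}(5,2,1,+)$ with $W=(a,0,0)$ and Proposition \ref{LS521+} shows that $\mathfrak{V}(\mathcal{S}(5,2,1,+))$ is likewise a $\Z_2$-orbifold of $V_{N(A_7^2D_5^2)}$ associated to an involution $g$ with $(V_{N(A_7^2D_5^2)}^g)_1\cong\g(A_7A_3B_2^2A_1^2)$. Thus both target VOAs arise as $\Z_2$-orbifolds of $V_{N(A_7^2D_5^2)}$ by involutions of this prescribed type. By Theorem \ref{TA7D5} there is a unique conjugacy class of such involutions, so the two involutions are conjugate, and Remark \ref{Rem2} then yields $\mathfrak{V}(\mathcal{S}(5,2,0))\cong\mathfrak{V}(\mathcal{S}(5,2,1,+))$.

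The genuinely hard work is entirely contained in the inputs I would quote rather than in the final assembly: Theorem \ref{TA7D5}, whose proof rests on the lattice-theoretic obstruction in Lemma \ref{LNA72}(2), and the Lie-algebra identifications in Propositions \ref{LS52}(2) and \ref{LS521+}(2) (carried out via the appendix computations). Within the assembly itself the only point deserving attention is the orbifold reversal sketched above — namely that the common subVOA $\mathfrak{V}(\mathcal{S}\cap W^\perp)$ indeed occurs inside $V_{N(A_7^2D_5^2)}$ as the fixed points of an involution of the required type — and this follows immediately from the $2$-dimensional quadratic-space picture of $U^\perp/U$ and Proposition \ref{PZ2}.
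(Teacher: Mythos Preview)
Your proposal is correct and follows exactly the paper's approach: the paper's proof of Theorem \ref{MT3} is the one-line ``Combining Remark \ref{Rem2}, Theorem \ref{TA7D5}, Propositions \ref{PZ2}, \ref{LS52} and \ref{LS521+}'', and your assembly is precisely this combination. The only addition is that you spell out the orbifold reversal (that $\mathfrak{V}(\mathcal{S})$ is in turn the $\Z_2$-orbifold of $\mathfrak{V}(\mathcal{T})$ via some $W'\in\mathcal{S}\setminus U$), which the paper leaves implicit in the symmetry of Proposition \ref{PZ2}; your justification via the two maximal totally singular extensions of $U$ is correct and is the intended mechanism.
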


\appendix\section{Explicit descriptions of ideals in Section \ref{Proof2} }
In this appendix, we describe the ideals defined in (\ref{Lie41}) and (\ref{6Id}) as a direct sum of simple ideals.
Let $e_1,e_2,\dots, e_8$ be an orthogonal basis of $\R^8$ such that $\langle e_i,e_j\rangle=2\delta_{ij}$.
Then
\begin{equation*}
E=\sum_{1\le i,j\le 8}\Z(e_i+e_j)+\Z\frac{1}{2}\sum_{i=1}^8e_i\label{sqrt2E8}
\end{equation*}
is isomorphic to $\sqrt2E_8$.
Note that $E^*=E/2$.

\subsection{Explicit description for the ideal in (\ref{Lie41})}
Set $$\mathcal{U}=\langle(s,0,0),(a,a,0),(b,0,b),(c+d,c,d),(y,y,0),(y,0,y)\mid s\in S_1\rangle_{\F_2}.$$
Then $\dim\mathfrak{V}(\mathcal{U})_1=41$.
The aim of this subsection is to see $\mathfrak{V}(\mathcal{U})_1\cong\g(A_3B_2^2A_1^2)$.

Up to conjugation, we may assume that $$S_1=\langle[V_E^-],[V_{e_1+E}^+]\rangle_{\F_2}, y=[V_{(e_1+e_2)/2+E}^+], a=[V_{(e_1+e_2+e_3+e_4)/2+E}^+], b=[V_{(e_1+e_2+e_5+e_6)/2+E}^+].$$
For the detail of irreducible $V_E^+$-modules, see \cite{FLM}.
Note that $\mathfrak{V}(\{(s,0,0)\mid s\in S_1\})_1={\rm Span}_\C\{e_i(-1), x(e_i)^\pm\mid 1\le i\le 8\}$, where $x(e_i)^\pm=e^{e_i}\pm \theta (e^{e_i})\in V_{e_1+E}^\pm$.
Then $\mathfrak{V}(\mathcal{U})_1$ is a direct sum of the following simple ideals:
\begin{align*}
&\C e_1(-1)\oplus \C e_2(-1)\oplus \bigoplus_{\varepsilon\in\{\pm\},i=1,2}\C x(e_i)^\varepsilon\oplus \mathfrak{V}(\{(y+s,y,0),(y+s,0,y),(0,y,y)\mid s\in S_1\})_1,\\
&\C e_3(-1)\oplus\C e_4(-1)\oplus \bigoplus_{\varepsilon\in\{\pm\},i=3,4}\C x(e_i)^\varepsilon\oplus\mathfrak{V}(\{(y+a+s,y+a,0)\mid s\in S_1\})_1,\\
&\C e_5(-1)\oplus\C e_6(-1)\oplus \bigoplus_{\varepsilon\in\{\pm\},i=5,6}\C x(e_i)^\varepsilon\oplus\mathfrak{V}(\{(y+b+s,0,y+b)\mid s\in S_1\})_1,\\
&\C e_7(-1)\oplus \C x(e_7)^+\oplus \C x(e_7)^-,\\
&\C e_8(-1)\oplus \C x(e_8)^+\oplus \C x(e_8)^-.
\end{align*}
Since their dimensions are $15$, $10$, $10$, $3$ and $3$, we have $\mathfrak{V}(\mathcal{U})_1\cong\g(A_3B_2^2A_1^2)$.

\subsection{Explicit description for the ideal in (\ref{6Id})}
By the arguments in the proof of Lemma \ref{LS521+}, the $64$-dimensional subalgebra $\mathfrak{V}(\{(0,s+t,\varphi(t))\mid s\in S_2,t\in T\})_1$ contains the $63$-dimensional ideal.
Let $H'$ be its $1$-dimensional ideal.
% of $\mathfrak{V}(\{(0,s_2,0),(0,t,\varphi(t))\mid s_2\in S_2,t\in T\})$.
Then by (\ref{6Id2}), $H'\subset\mathfrak{V}(\{(0,h_2,0)\})_1$, where $h_2$ is the non-zero vector in $S_2$.
Set $$\mathcal{U}=\{ (s_1+p+q,p+s_2,q)\mid s_i\in S_i, p\in P,q\in Q'\}\setminus\{(0,h_2,0)\}.$$

In this subsection, we show that $\mathfrak{V}(\mathcal{U})_1\oplus H'\cong\g(A_3B_2^2A_1^2)$.
Note that its dimension is $41$.
Let $p_0$ be the non-singular vector in $P$.
Take a non-singular vector $q_0\in Q'$.
Then the set of all non-singular vectors in $Q'$ is $\{q_0,q_0+a\}$.
Up to conjugation, we may assume that $S_1=\langle [V_E^-],[V_{e_1+E}^+]\rangle_{\F_2}$, $S_2=\{ [V_E^\varepsilon]\mid \varepsilon\in\{\pm\}\}$, $p_0=[V_{(e_1+e_2)/2+E}^+]$, $q_0=[V_{(e_3+e_4)/2+E}^+]$, $a=[V_{(e_3+e_4+e_5+e_6)/2+E}^+]$.
%, $b=[V_{(e_1+e_2+e_5+e_6)/2+L}^+]$.
Then $\mathfrak{V}(\{(s,0,0)\mid s\in S_1\})_1={\rm Span}_\C\{e_i(-1), x(e_i)^\pm\mid 1\le i\le 8\}$, where $x(e_i)^\pm=e^{e_i}\pm\theta(e^{e_i})\in V_{e_i+E}^\pm$.
One can see that $\mathfrak{V}(\mathcal{U})_1$ is a direct sum of the following simple ideals:
\begin{align*}
&\C e_1(-1)\oplus \C e_2(-1)\oplus \bigoplus_{\varepsilon\in\{\pm\},i=1,2}\C x(e_i)^\varepsilon\oplus \mathfrak{V}(\{(p_0+s_1,p_0+s_2,0)\mid s_i\in S_i\})_1\oplus H',\\
&\C e_3(-1)\oplus\C e_4(-1)\oplus \bigoplus_{\varepsilon\in\{\pm\},i=3,4}\C x(e_i)^\varepsilon\oplus\mathfrak{V}(\{(q_0+s_1,0,q_0)\mid s_1\in S_1\})_1,\\
&\C e_5(-1)\oplus\C e_6(-1)\oplus \bigoplus_{\varepsilon\in\{\pm\},i=5,6}\C x(e_i)^\varepsilon\oplus\mathfrak{V}(\{(q_0+a+s_1,0,q_0+a)\mid s_1\in S_1\})_1,\\
&\C e_7(-1)\oplus \C x(e_7)^+\oplus \C x(e_7)^-,\\
&\C e_8(-1)\oplus \C x(e_8)^+\oplus \C x(e_8)^-.
\end{align*}
Since their dimensions are $15$, $10$, $10$, $3$ and $3$, we have $\mathfrak{V}(\mathcal{U})_1\oplus H'\cong\g(A_3B_2^2A_1^2)$.

\end{document}